\definecolor{myurlcolor}{rgb}{0,0,0.3}
\definecolor{mycitecolor}{rgb}{0,0.3,0}
\definecolor{myrefcolor}{rgb}{0.3,0,0}
\newcommand{\beq}{\begin{equation}}
\newcommand{\eeq}{\end{equation}}
\newcommand{\N}{\mathbb{N}}
\newcommand{\Nplus}{\mathbb{N}_{> 0}}			
\newcommand{\Z}{\mathbb{Z}}
\newcommand{\Q}{\mathbb{Q}}
\newcommand{\Qplus}{\mathbb{Q}_{> 0}}			
\newcommand{\R}{\mathbb{R}}
\newcommand{\Rplus}{\mathbb{R}_{> 0}}			
\newcommand{\TR}{\mathbb{TR}}
\newcommand{\B}{\mathbb{B}}				
\newcommand{\op}{\mathrm{op}}
\newcommand{\eps}{\varepsilon}
\newcommand{\Sper}[1]{\mathsf{TSper}(#1)}		
\newcommand{\lc}{\mathrm{lc}}			
\newcommand{\lev}{\mathrm{lev}}			
\DeclareMathOperator{\Frac}{\mathsf{Frac}}	
\DeclareMathOperator{\Newton}{\mathsf{Newton}}	
\newtheorem{dummy}{Dummy}[section]
\newtheorem{thm}[dummy]{Theorem}\Crefname{thm}{Theorem}{Theorems}
\newtheorem{lem}[dummy]{Lemma}\Crefname{lem}{Lemma}{Lemmas}
\newtheorem{prop}[dummy]{Proposition}\Crefname{prop}{Proposition}{Propositions}
\newtheorem{cor}[dummy]{Corollary}\Crefname{cor}{Corollary}{Corollaries}
\Crefname{conj}{Conjecture}{Conjectures}
\Crefname{qstn}{Question}{Questions}
\newtheorem{defn}[dummy]{Definition}\Crefname{defn}{Definition}{Definitions}
\Crefname{prob}{Problem}{Problems}
\Crefname{nota}{Notation}{Notations}
\theoremstyle{remark}
\newtheorem{ex}[dummy]{Example}\Crefname{ex}{Example}{Examples}
\newtheorem{rem}[dummy]{Remark}\Crefname{rem}{Remark}{Remarks}
\Crefname{note}{Note}{Notes}
\numberwithin{equation}{section}
\Crefname{enumi}{}{}
\setlist[enumerate]{label=(\alph*),itemsep=5pt,topsep=8pt}
\setlist[itemize]{label=$\triangleright$,itemsep=5pt,topsep=6pt}
\let\originalleft\left
\let\originalright\right
\renewcommand{\left}{\mathopen{}\mathclose\bgroup\originalleft}
\renewcommand{\right}{\aftergroup\egroup\originalright}
\titleformat{\section}[block]{\bfseries\large\filcenter}{\thesection.}{6pt}{#1}
\titlespacing{\section}{0pt}{18pt}{12pt}
\titleformat{\subsection}[runin]{\bfseries}{\noindent}{0.4em}{#1.}
\normalsize\contentslabel[\thecontentslabel]{2em}}
\normalsize\contentslabel[\thecontentslabel]{2em}}
\newcommand{\newterm}[1]{\textbf{#1}}
\title[Abstract Vergleichsstellens\"atze II]{Abstract Vergleichsstellens\"atze\\ for preordered semifields and semirings II}
\author{Tobias Fritz}
\address{Department of Mathematics, University of Innsbruck}
\email{tobias.fritz@uibk.ac.at}
\keywords{}
\subjclass[2020]{Primary: 06F25; Secondary: 16W80, 16Y60, 12K10, 14P10}
\thanks{\textit{Acknowledgements.}
	We thank Erkka Theodor Haapasalo, Zhang Jiang, Mil\'an Mosonyi, Xiaosheng Mu, Tim Netzer, Luciano Pomatto, Markus Schweighofer, Omer Tamuz, Marco Tomamichel, Becca Verghese, Frits Verhagen, P\'eter Vrana as well as an anonymous referee for useful feedback, discussion and suggestions.}
\begin{document}

\begin{abstract}
	The present paper continues our foundational work on real algebra with preordered commutative semifields and semirings.
	We prove two abstract Vergleichsstellens\"atze for preordered commutative semirings of polynomial growth.
	These generalize the results of Part I by no longer assuming $1 \ge 0$.
	Such a generalization comes with substantial technical complications: our Vergleichsstellens\"atze now also need to take into account infinitesimal information encoded in the form of monotone derivations in addition to the monotone homomorphisms to the nonnegative reals and tropical reals.
	The auxiliary technical results we develop along the way include surprising implications between inequalities in preordered semifields and a type classification for multiplicatively Archimedean fully preordered semifields.
	
	Among other applications, two companion papers use these results in order to derive new results in probability and information theory; one on asymptotics of random walks on topological abelian groups, and the other on the asymptotics of matrix majorization.
\end{abstract}

\maketitle

\tableofcontents


\section{Introduction}

This paper is part of an emerging research program on real algebra with commutative preordered semirings and semifields.
Here, semirings and semifields are like rings and fields, but without the assumption of additive inverses.
In Part I~\cite{partI}, detailed motivation for this project was given. It comes in the form of two main points:
\begin{itemize}
	\item Our Vergleichsstellens\"atze have entirely new applications which are not covered by the classical Positivstellens\"atze.
		For example, take any class of mathematical structures---such as representations of a Lie group---for which notions of direct sum and tensor product exist, and the tensor products distribute over direct sums. Then the isomorphism classes form a semiring. In many cases, this semiring carries a canonical preorder with respect to one structure being included (up to isomorphism) in another.
		One then obtains a preordered semiring, and results such as our Vergleichsstellens\"atze are useful tools in understanding its structure.

		The results of the present paper have found their first applications in probability and information theory.
		This comprises new formulas for the asymptotics of random walks on topological abelian groups~\cite{arw} as well as for the asymptotics of matrix majorization~\cite{major}.
	\item From the perspective of real algebra itself, semifields have a number of advantages over fields that make them of intrinsic interest.
		One such advantage is that if $\R_+(X)$ is the semifield of rational functions with nonnegative coefficients, then the evaluation maps are well-defined homomorphisms $\R_+(X) \to \R_+$, in stark contrast to the lack of evaluation homomorphisms on fields of rational functions.
		Another advantage is that semifields can combine the convenience of multiplicative inverses with the presence of nilpotency. For example, there is a semifield $F \coloneqq \R_{(+)}[X] / (X^2)$ whose nonzero elements are the linear polynomials $r + s X$ with $r > 0$. Since its enveloping ring is the ring of dual numbers, $F \otimes \Z \cong \R[X] / (X^2)$, it is clear that $F$ cannot be embedded into a field.
\end{itemize}
As in real algebra generally, to understand the structure of a particular preordered semiring, it is useful to probe this structure through homomorphisms to test objects such as the real numbers. These homomorphisms can be thought of in geometrical terms as points of a spectrum.
By analogy with Positivstellens\"atze, Part I has introduced the term \newterm{Vergleichsstellensatz} for a type of result which relates the given algebraic preorder to a spectral preorder.
It provides sufficient conditions for the existence of an algebraic certificate witnessing a spectral preorder relation.

More concretely, in Part I we first proved a separation theorem for preordered semifields, which states that every semifield preorder is the intersection of its total semifield preorder extensions.
From this, we derived a Vergleichsstellensatz for a certain class of preordered semirings $S$ in which $1 > 0$.
In this result, the role of the spectrum is played by the monotone homomorphisms $\phi : S \to \R_+$ and $\phi : S \to \TR_+$, where $\TR_+ \coloneqq ([0,\infty),\max,\cdot)$ is the semifield of tropical reals.
Roughly speaking, for nonzero $x, y \in S$ our Vergleichsstellensatz 
considers two kinds of algebraic certificates:
\begin{itemize}
	\item A \newterm{catalytic} certificate, stating that there is nonzero $a \in S$ with
		\beq
			\label{cat_cert}
			a x \le a y.
		\eeq
	\item An \newterm{asymptotic} certificate, which states that
		\beq
			\label{asymp_cert}
			x^n \le y^n \qquad \forall n \gg 1.
		\eeq
\end{itemize}
These are certificates for the spectral preorder relation in the sense that if either of them holds, then $\phi(x) \le \phi(y)$ for all $\phi$ is easily implied.
Our Vergleichsstellensatz now provides an almost converse: if we have strict inequality $\phi(x) < \phi(y)$ for all such $\phi$, then both algebraic certificates hold.
The fully formal statement with precise assumptions will be recalled as \Cref{Imain}.

The goal of the present paper is to dig deeper and develop abstract Vergleichsstellens\"atze that apply even if $1 \not\ge 0$.
This may seem like an artificial problem on first look, because why would one want to do this?
However, there actually are many applications, for example to probability and information theory, where preordered semirings with $1 \not\ge 0$ appear.
To see why, it is enough to note that probability measures are by definition normalized to $1$, which indicates that one only wants measures of the same normalization to be comparable at all.
In particular, the zero measure will not be comparable to any normalized measure, resulting in $1 \not\ge 0$ and $1 \not\le 0$.
Two such applications of the results of this paper have been worked out in companion papers~\cite{arw,major}, and we now give a brief sketch of what the relevant preordered semiring is\footnote{We do so in a watered-down version where only finitely supported measures are considered; the preordered semiring actually considered in~\cite{arw} has arbitrary measures (of compact support) on a topological abelian group as elements.} and why one has $1 \not\ge 0$.
The elements of the polynomial semiring $\R_+[X_1,\ldots,X_d]$ can be identified with finitely supported measures on $\N^d$, where addition of polynomials corresponds to addition of measures and multiplication of polynomials corresponds to convolution of measures. The semiring preorder generated by
\[
	X_1 \ge 1, \quad \ldots, \quad X_d \ge 1
\]
then matches exactly the so-called \emph{first-order stochastic dominance} at the level of measures.
In algebraic terms, for $p, q \in \R_+[X_1, \ldots, X_d]$ we have $p \le q$ if and only if one can increase the exponents in some of the terms of $p$ so as to obtain $q$.
Then in order for $p, q \in \R_+[X_1, \ldots, X_d]$ to be comparable at all, it is necessary for the sum of the coefficients to be the same, i.e.~we must have $p(1,\ldots,1) = q(1,\ldots,1)$. 
In particular, we indeed have $1 \not\ge 0$ and $1 \not\le 0$ in this preordered semiring.

Nevertheless, it is of interest to know when a catalytic preordering certificate \eqref{cat_cert} and/or an asymptotic preordering certificate \eqref{asymp_cert} exist.
Since $\R_+[X_1,\ldots,X_d]$ is isomorphic to the semiring of finitely supported measures on $\N^d$ with convolution as multiplication, these are equivalently questions about random walks on $\N^d$.
In particular, the asymptotic preordering detects when one random walk will dominate another one at late times, in the sense of its distribution being further ``upwards'' componentwise in $\N^d$.
Our Vergleichsstellens\"atze are exactly the right tool to detect when this dominance occurs by relating it to the spectral preorder, which can be calculated very concretely.
Of course, among the relevant spectral preorder relations are the point evaluations like
\[
	p(r_1, \ldots, r_d) > q(r_1, \ldots, r_d) \qquad \forall r_1, \ldots, r_d > 1.
\]
But something special happens at the point $(1,\ldots,1)$: due to $p(1,\ldots,1) = q(1,\ldots,1)$, which is the normalization of probability, it turns out that infinitesimal information around the evaluation homomorphism $f \mapsto f(1,\ldots,1)$ must be taken into account. This comes in the form of the inequalities
\begin{equation}
	\label{infinitesimal_intro}
	\frac{\partial p}{\partial X_i}(1,\ldots,1) < \frac{\partial q}{\partial X_i}(1,\ldots,1) \qquad \forall i = 1, \ldots, d.
\end{equation}
In terms of our formalism, these arise because the map
\[
	\frac{\partial}{\partial X_i}|_{(1,\ldots,1)} \: : \: \R_+[X_1,\ldots,X_d] \longrightarrow \R
\]
is a monotone derivation with respect to the evaluation homomorphism at $(1,\ldots,1)$.
For more detail, we refer to \Cref{free_ex,free_ex2} and the companion paper~\cite{arw}, where this is done more generally for random walks with compactly supported steps on topological abelian groups in general.

As this example may already indicate,
the technical challenges that arise in dealing with preordered semirings with $1 \not\ge 0$ are substantially greater than in the earlier case with $1 \ge 0$ that was considered in Part I.
A cleaner example illustrating some of the difficulty is the semifield $F \coloneqq \R_{(+)}[X] / (X^2)$ mentioned above, when equipped with the semiring preorder defined as
\[
	r_1 + s_1 X \le r_2 + s_2 X \quad : \Longleftrightarrow \quad r_1 = r_2 \:\:\land\:\: s_1 \le s_2.	
\]
Note that this is exactly the semiring preorder generated by $X \ge 1$.
Now the only homomorphism $F \to \R_+$ is the projection $r + s X \mapsto r$, and the only homomorphism $F \to \TR_+$ to the tropical reals $\TR_+$ is the degenerate one mapping every nonzero element to $1$.
Therefore if we were to use the same definition of the spectral preordering as in Part I, where this involved only monotone homomorphisms with values in $\R_+$ and $\TR_+$, then this spectral preordering would degenerate completely and could not display any interesting relation to the algebraic preordering on $F$.

As indicated already by~\eqref{infinitesimal_intro}, our solution to this problem is to enlarge the spectrum by infinitesimal information in the form of monotone derivations.
In fact, we will consider the preordered semifield $\R_{(+)}[X] / (X^2)$ semifield itself as another test object, so that the structure of other preordered semirings can also be probed through monotone homomorphisms with values in $\R_{(+)}[X] / (X^2)$. The two components of such a homomorphism form a pair $(\phi, D)$, where $\phi : S \to \R_+$ is a homomorphism that is \newterm{degenerate} in the sense that
\[
	x \le y \quad \Longrightarrow \quad \phi(x) = \phi(y),
\]
and a monotone \newterm{$\phi$-derivation} $D : S \to \R$, which is an additive monotone map satisfying the Leibniz rule with respect to $\phi$,
\[
	D(xy) = \phi(x) D(y) + D(x) \phi(y).
\]
Furthermore, we will also have to consider the \newterm{opposite semifields} $\R_+^\op$ and $\TR_+^\op$ as test objects.
This is not so surprising in light of the fact that reversing the preorder on a preordered semiring $S$ results in another preordered semiring $S^\op$.

In full technical detail, and using notions that will be introduced in the main text, our Vergleichsstellensatz for catalytic certificates is then the following.
We will prove it as \Cref{main1}.

\begin{thm}
	\label{intro_main1}
	Let $S$ be a zerosumfree preordered semidomain with a power universal pair $u_-, u_+ \in S$ and such that:
	\begin{itemize}
		\item $S / \!\sim$ has quasi-complements and quasi-inverses.
		\item $\Frac(S / \!\sim) \otimes \Z$ is a finite product of fields.
	\end{itemize}
	Let nonzero $x, y \in S$ with $x \sim y$ satisfy the following:
	\begin{itemize}
		\item For every nondegenerate monotone homomorphism $\phi : S \to \mathbb{K}$ with trivial kernel and $\mathbb{K} \in \{\R_+, \R_+^\op, \TR_+, \TR_+^\op\}$,
			\begin{equation}
				\label{phi_intro}
				\phi(x) < \phi(y).
			\end{equation}
		\item For every monotone additive map $D : S \to \R$, which is a $\phi$-derivation for some degenerate homomorphism $\phi : S \to \R_+$ with trivial kernel and satisfies $D(u_+) = D(u_-) + 1$,
			\begin{equation}
				\label{D_intro}
				D(x) < D(y).
			\end{equation}
	\end{itemize}
	Then there is nonzero $a \in S$ such that $a x \le a y$. 

	Moreover, if $S$ is also a semialgebra, then it is enough to consider $\R_+$-linear derivations $D$ in the assumptions.
\end{thm}

Conversely, if $a x \le a y$ holds for some nonzero $a$, then the same spectral conditions~(\ref{phi_intro}, \ref{D_intro}) are trivially implied with non-strict inequalities $\le$ in place of $<$.
The first two itemized assumptions on $S$ can be thought of as saying that although we do not require $1 \ge 0$ to be the case, this should nevertheless not fail \emph{too} badly, in the sense that the preorder relation $\le$ must still be suitably large.
In the final part of the theorem statement, the term \newterm{semialgebra} refers to the case in which $S$ comes equipped with a scalar multiplication by $\R_+$.

We do not currently have a Vergleichsstellensatz for asymptotic certificates that would apply at the same level of generality, but we do have one that applies under somewhat stronger assumptions on $S$ (which still fall short of assuming $1 \ge 0$).
This takes the following form, which in its conclusions precisely matches our main result of Part I.
It will be proven as \cref{main2}.

\begin{thm}
	\label{main2_intro}
	Let $S$ be a preordered semiring with a power universal element $u \in S$.
	Suppose that for some $d \in \N$, there is a surjective homomorphism $\|\cdot\| : S \to \R_{>0}^d \cup \{0\}$ with trivial kernel and such that
	\[
		a \le b \quad \Longrightarrow \quad \|a\| = \|b\| \quad \Longrightarrow \quad a \sim b.
	\]
	Let $x,y \in S$ be nonzero with $\|x\| = \|y\|$.
	Then the following are equivalent:
	\begin{enumerate}
		\item\label{spectral_ineqs_intro}
			\begin{itemize}
				\item For every nondegenerate monotone homomorphism $\phi : S \to \mathbb{K}$ with trivial kernel and $\mathbb{K} \in \{\R_+, \R_+^\op, \TR_+, \TR_+^\op\}$,
					\[
						\phi(x) \le \phi(y).
					\]
				\item For every $i = 1,\ldots,d$ and monotone $\|\cdot\|_i$-derivation $D : S \to \R$ with $D(u) = 1$,
					\[
						D(x) \le D(y).
					\]
			\end{itemize}
		\item\label{alg_ineqs_intro} For every $\eps > 0$, we have
			\[
				x^n \le u^{\lfloor \eps n \rfloor} y^n	\qquad \forall n \gg 1.
			\]
	\end{enumerate}
	Moreover, suppose that the inequalities in~\ref{spectral_ineqs_intro} are all strict.
	Then also the following hold:
	\begin{enumerate}[resume]
		\item There is $k \in \N$ such that
			\[
				u^k x^n \le u^k y^n \qquad \forall n \gg 1.
			\]
		\item If $y$ is power universal as well, then
			\[
				x^n \le y^n \qquad \forall n \gg 1.
			\]
		\item There is nonzero $a \in S$ such that
			\[
				a x \le a y.
			\]
			Moreover, there is $k \in \N$ such that $a \coloneqq u^k \sum_{j=0}^n x^j y^{n-j}$ for any $n \gg 1$ does the job.
	\end{enumerate}
	Finally, if $S$ is also a semialgebra, then all statements also hold with only $\R_+$-linear derivations $D$ in \ref{spectral_ineqs_intro}.
\end{thm}

We again refer to~\cite{arw} for the application to random walks, which gives much stronger results than what has been achieved with purely probabilistic methods so far, and to~\cite{major} for another application to information theory.

\subsection*{Overview}

We now give some indication of the content of each section.

\begin{itemize}
	\item \Cref{background} summarizes the main definitions and results of Part I, so that the present paper can be read independently of~\cite{partI}.
	\item \Cref{further} introduces a few additional relevant definitions and makes some basic observations that will be used in the remainder of the paper.
\end{itemize}

The next few sections are devoted to developing some structure theory of preordered semifields.
This builds the technical groundwork for our main results, but we also expect it to be relevant for future work in the area.

\begin{itemize}
	\item \Cref{intermezzo} proves a number of important and surprisingly strong inequalities in preordered semirings and semifields by elementary means.
		Although we will not dwell on this relation further, readers with a good background in probability theory will be able to interpret many of those inequalities in terms of second-order stochastic dominance.
	\item \Cref{malt_full} introduces multiplicatively Archimedean fully preordered semifields and analyzes their structure, resulting in a classification into five types. 
		\Cref{layer_preorder} provides a sense in which these preordered semifields are the building blocks of all totally preordered semifields.
		And since the latter are a stepping stone for analyzing the structure of \emph{any} preordered semifield by our first Vergleichsstellensatz from Part I (\Cref{semifield_main}), the results of \Cref{malt_full} are an important part of the structure theory of preordered semifields in general.
	\item \Cref{ambient} introduces a certain derived preorder relation on any preordered semifield, the so-called \emph{ambient preorder}. 
		The main use of this construction is that even if $1$ and $0$ are not preordered relative to each other in a given preordered semifield, they often will be with regards to the ambient preorder.
		With some effort, this then allows us to leverage the results of Part I, making them apply in particular to multiplicatively Archimedean fully preordered semifields as studied in \Cref{malt_full}.
		This results in two embeddings theorems, namely \Cref{real_trop} and \Cref{arctic_deriv}, which constitute the main results of this section.
\end{itemize}
The final two sections contain our two main results, the proofs of which crucially rely on the auxiliary results of \Cref{intermezzo,malt_full,ambient}.
\begin{itemize}
	\item \Cref{n2} combines the results obtained so far and derives a catalytic Vergleichsstellensatz, namely the \Cref{intro_main1} quoted above.
		While the main applications appear elsewhere, \Cref{free_ex} showcases the application to polynomials mentioned above.
	\item \Cref{n3} then proves an asymptotic Vergleichsstellensatz, namely the above \Cref{main2_intro}. The proof is based on \Cref{intro_main1} and the compactness of the \emph{test spectrum} (\Cref{chaus2}), as introduced in \Cref{test_spectrum}. \Cref{free_ex2} briefly illustrates the statement in the same polynomial semiring example as before.
\end{itemize}

\subsection*{Relation to existing literature}

The lack of additive inverses in semirings frequently forces us to use very different methods than those used standardly in the proofs of Positivstellens\"atze. 
This relates to the additive structure often being considered primary in real algebra, which manifests itself e.g.\ in the definition of Archimedeanicity or in the use of functional-analytic methods like decompositions of states into pure states.
In contrast, our methods put the emphasis on the multiplicative structure and exploit the existence of multiplicative inverses in semifields.
A good case in point are the proofs of the various inequalities derived in \Cref{intermezzo}.

Nevertheless, some of our methods overlap with those used in proofs of Positivstellens\"atze.
This applies in particular to the following ideas:
\begin{itemize}
	\item The reduction to total semifield preorders of \cref{semifield_main} mirrors a very general proof technique for separation theorems, used already e.g.~in Artin's solution to Hilbert's 17th problem~\cite[Lemma~1.4.4]{marshall}.
	\item Our use of compactness arguments for the spectrum to achieve strict is parallel to standard arguments in real algebra. Compare e.g.~\cite[Theorem~5.7.2]{marshall} with our \cref{n3}.
	\item Our use of derivations to capture infinitesimal information is also reminiscent of the use of derivative information, as for example in Positivstellensatz-type results of Marshall~\cite[Theorem~2.3]{Marshall2006}, Scheiderer~\cite[Corollary~3.6]{Scheiderer2005} and Burgdorf, Scheiderer and Schweighofer~\cite[Theorem~7.8]{BSS}.
\end{itemize}

\section{Background from Part I}
\label{background}

Here, we recall the basic definitions around semirings together with the main new definitions and results developed in Part I~\cite{partI} as far as they are relevant to this paper.
In the following, all unreferenced definitions and results are standard. Those of Part I are referenced with their theorem number prefixed by ``I''.

A \newterm{commutative semiring} is a set $S$ together with two commutative monoid structures $(S,+,0)$ and $(S,\cdot,1)$ such that the multiplication $\cdot$ distributes over the addition $+$ and $1 \cdot 0 = 0$. Since we will not consider the noncommutative case at all, we simply use the term \newterm{semiring} as short for ``commutative semiring''. The set of multiplicatively invertible elements in a semiring $S$ is denoted $S^\times$. There is a unique semiring homomorphism $\N \to S$, given by $n \mapsto n1$, and we often abuse notation by writing $n$ instead of $n1$. A semiring $S$ is \newterm{zerosumfree} if $x + y = 0$ in $S$ implies $x = y = 0$. It has \newterm{quasi-complements} if for every $x \in S$ there are $y \in S$ and $n \in \N$ such that $x + y = n$ (I.2.19).

A \newterm{semifield} $F$ is a semiring such that $F^\times = F \setminus\{0\}$. A semifield is \newterm{strict} if $1$ has no additive inverse, or equivalently if $F^\times$ is closed under addition, or yet equivalently if $F$ is zerosumfree. Every semifield that is not a field is strict. A \newterm{semidomain} is a semiring without zero divisors and such that $1 \neq 0$. A semidomain $S$ has a \newterm{semifield of fractions} $\Frac(S)$ together with a homomorphism $S \to \Frac(S)$ that is the initial semiring homomorphism from $S$ into a semifield. $\Frac(S)$ is a strict semifield if and only if $S$ is zerosumfree.

A \newterm{preorder} is a binary relation that is reflexive and transitive. We typically denote a preorder by $\le$, where the symbols $\ge$, $<$ and $>$ have their standard induced meaning. We write $\preceq$ when another preorder symbol is needed. We also use the following two derived relations:
\begin{itemize}
	\item $\approx$ is the largest equivalence relation contained in $\le$. In other words, $x \approx y$ is shorthand for $(x \le y) \:\land\: (y \le x)$.
	\item $\sim$ is the smallest equivalence relation containing $\le$. In other words, $x \sim y$ is shorthand for the existence of a finite sequence $x = z_0, z_1, \ldots, z_n = y$ such that $z_i \le z_{i+1}$ or $z_i \ge z_{i+1}$ for all $i$.
\end{itemize}
A preorder is \newterm{total} if $x \le y$ or $y \le x$ for all $x$ and $y$. A map $f$ between preordered sets is \newterm{monotone} if $x \le y$ implies $f(x) \le f(y)$.
A monotone map $f$ is an \newterm{order embedding} if the converse holds as well. If $X$ is a preordered set, then $X^\op$ denotes the preordered set with the \newterm{opposite preorder} in which $x \le y$ holds if and only if $y \le x$ in $X$.

A \newterm{preordered semiring} is a semiring together with a preorder relation such that for every $a \in S$, both addition by $a$ and multiplication by $a$ are monotone maps $S \to S$ (I.3.7). Note that $1 \ge 0$ is not required. If $S$ is a preordered semiring, then so is $S^\op$. \newterm{Preordered semifields} and \newterm{preordered semidomains} (I.3.16/21) are preordered semirings that are semifields, respectively semidomains, with the additional condition that $x \approx 0$ implies $x = 0$; for a preordered semifield, this means equivalently that $1 \not\le 0$ or $1 \not\ge 0$.
If $S$ is a preordered semidomain, then $\Frac(S)$ is a preordered semifield in which
\begin{equation}
	\label{frac_preorder}
	\frac{x}{a} \le \frac{y}{b} \quad :\Longleftrightarrow \quad \exists r \in S \setminus \{0\} : \; x b \le y a.
\end{equation}
With this definition, $\Frac(S)$ is a preordered semifield (I.3.22) such that the canonical homomorphism $S \to \Frac(S)$ is monotone (I.3.23).

The following is a central result in the theory of preordered semifields.

\begin{thm}[I.6.6]
	\label{semifield_main}
	Let $F$ be a preordered semifield. Then the preorder on $F$ is the intersection of all its total semifield preorder extensions.	
\end{thm}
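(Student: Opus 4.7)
The plan is a Zorn's lemma argument: for each pair $x_0, y_0 \in F$ with $x_0 \not\le y_0$, I would produce a total semifield preorder extension $\preceq$ of $\le$ for which $x_0 \not\preceq y_0$ still holds. Intersecting over all such pairs then recovers $\le$ as claimed. To this end, fix $x_0, y_0$ and consider the family $\mathcal{E}$ of semifield preorders on $F$ that extend $\le$ and avoid the pair $(x_0, y_0)$. It is nonempty (it contains $\le$ itself) and closed under directed unions, since both the monotonicity axioms and the avoidance condition transfer to unions of chains of relations. By Zorn's lemma $\mathcal{E}$ contains a maximal element $\preceq$, and the whole task reduces to showing this $\preceq$ is total.

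\medskip

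Suppose for contradiction that $a \not\preceq b$ and $b \not\preceq a$ for some $a, b \in F$. Let $\preceq_+$ and $\preceq_-$ denote the semifield preorders generated by adjoining $(a,b)$ and $(b,a)$ to $\preceq$ respectively. Both strictly extend $\preceq$, so by maximality both fall outside $\mathcal{E}$, yielding $x_0 \preceq_+ y_0$ and $x_0 \preceq_- y_0$. I would then establish a normal form for such a generated semifield preorder---typically, $u \preceq_+ v$ iff $u + s a \preceq v + s b$ for some $s \in F$, with a symmetric description for $\preceq_-$---and apply it to $(x_0,y_0)$ to obtain $s, t \in F$ with
\[
x_0 + s a \preceq y_0 + s b, \qquad x_0 + t b \preceq y_0 + t a.
\]
Multiplying these by $t$ and $s$ respectively and adding (legitimate by monotonicity of $+$ and $\cdot$) produces the symmetric relation $(s+t)\, x_0 + st(a+b) \preceq (s+t)\, y_0 + st(a+b)$.

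\medskip

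From here the aim is to derive $x_0 \preceq y_0$, contradicting $\preceq \in \mathcal{E}$. The degenerate case $s + t = 0$ can be handled via the strong structural constraints a semifield places on additive cancellations to zero, which collapses the two initial certificates directly to $x_0 \preceq y_0$. In the generic case $s+t$ is invertible, and one cancels it multiplicatively using that multiplication by $(s+t)^{-1}$ is order-reflecting on a preordered semifield. This leaves the task of cancelling the common additive summand $\tfrac{st}{s+t}(a+b)$, reducing the problem to one of additive cancellation inside $\preceq$.

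\medskip

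The main obstacle is precisely this last, additive, cancellation: multiplicative cancellation is automatic on a preordered semifield but additive cancellation is not, and without the assumption $1 \ge 0$ the usual ``positive cone'' manipulations of ordered-ring theory are unavailable. I expect the actual proof to either work with a richer normal form for $\preceq_+$ that already exposes the cancellation structurally---so that the contradiction emerges before one has to cancel after the fact---or to make a preliminary reduction to a subclass of semifield preorders on which additive cancellation is free, and then lift the result. Either way, the key technical content is showing that the additive and multiplicative structures on $F$ cooperate strongly enough for any maximal $(x_0, y_0)$-avoiding semifield preorder to automatically be total.
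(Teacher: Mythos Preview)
This theorem is quoted from Part~I of the series (as Theorem~I.6.6) and is not proved in the present paper, so there is no proof here to compare against directly. I can nevertheless assess your sketch on its own terms.

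The Zorn's lemma framework is the natural one, and the reduction to showing that a maximal $(x_0,y_0)$-avoiding semifield preorder must be total is correct. But two steps are genuinely incomplete.

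First, your normal form for $\preceq_+$ is not right as written. If $\preceq_+$ is generated by adjoining $a \preceq_+ b$ to $\preceq$, then the condition ``$u + sa \preceq v + sb$ for some $s$'' fails even to contain the generator: setting $u = a$, $v = b$ forces $(1+s)a \preceq (1+s)b$, hence $a \preceq b$ whenever $1+s$ is invertible---which it always is in a strict semifield. Swapping the roles of $a$ and $b$ in the added terms (so that one asks for $u + sb \preceq v + sa$) does give a transitive relation extending $\preceq$ and containing the pair $(a,b)$, but it is then not clear that this relation is \emph{contained} in the generated semifield preorder: deducing $u \preceq_+ v$ from $u + sb \preceq v + sa$ and $sa \preceq_+ sb$ is itself an additive cancellation. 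One must also separately check that the generated relation remains a semifield preorder in the paper's sense, i.e.\ that it does not force $0 \preceq_+ z \preceq_+ 0$ for some nonzero $z$.

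Second---and this is the point you yourself isolate---the final additive cancellation from $x_0 + c \preceq y_0 + c$ to $x_0 \preceq y_0$ is exactly the substantive difficulty. In a preordered semifield without $1 \ge 0$ there is no positive cone to invoke, and multiplicative invertibility alone does not supply additive cancellation. Your closing paragraph correctly names this as the crux but offers only the expectation that a richer normal form or a preliminary reduction will dissolve it. As it stands, the proposal is an outline whose central technical obstacle is identified but not overcome.
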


In other words, if $x \not\le y$ in $F$, then the preorder on $F$ can be extended to a total semifield preorder $\preceq$ such that still $x \not\preceq y$, which by totality in particular implies $x \succ y$.

A preordered semifield $F$ is \newterm{multiplicatively Archimedean} if $x^k \le y$ for all $k \in \N$ implies $x \le 1$ for all $x, y \in F^\times$ (Definition~I.4.1). The paradigmatic examples of multiplicatively Archimedean semifields are $\R_+$ with its usual algebraic structure as well as the \newterm{tropical reals} $\TR_+$, defined as
\[
	\TR_+ \,\coloneqq\, (\R_+, \max, \cdot) \,\cong\, (\R \cup \{-\infty\}, \max, +),
\]
where the first equation is the \newterm{multiplicative picture} of $\TR_+$ and the second the \newterm{additive picture}. The isomorphism between them is given by the logarithm (with any base). As far as it matters, we always specify explicitly which picture of $\TR_+$ we use, with the multiplicative picture often being preferred. The tropical reals also contain the \newterm{Boolean semifield} $\B = \{0,1\}$, in which $1 + 1 = 1$. It is the terminal object in the category of strict semifields and semiring homomorphisms.
All the strict semifields mentioned in this paragraph are preordered semifields with respect to the standard preorder relation.

\begin{thm}[I.4.2]
	\label{real_or_tropical}
	Let $F$ be a multiplicatively Archimedean totally preordered semifield. Then $F$ order embeds into one of the following:
	\[
		\R_+, \quad \R_+^\op, \quad \TR_+, \quad \TR_+^\op.
	\]
\end{thm}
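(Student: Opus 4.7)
The plan is to adapt H\"older's classical embedding theorem for Archimedean totally ordered abelian groups to the setting of multiplicatively Archimedean preordered semifields, with the choice among the four target semifields determined by a dichotomy on the additive structure (real versus tropical) combined with the direction of the preorder (which accounts for the $\op$ variants).

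First I would make some reductions. By replacing $F$ with $F^\op$ if necessary, which exchanges $\R_+ \leftrightarrow \R_+^\op$ and $\TR_+ \leftrightarrow \TR_+^\op$ in the conclusion, I may assume $1 \not\le 0$; combined with the preordered semifield axiom $0 \le x \le 0 \Rightarrow x = 0$, this forces $1 > 0$, since $1 \approx 0$ is excluded. Next I would show that $F$ must be totally preordered modulo $\approx$: for any nonzero $x,y$, at least one of $xy^{-1}$ or $yx^{-1}$ should be comparable with $1$, and multiplicative Archimedeanity applied to suitable combinations can be used to exclude the ``incomparable'' case. This is where multiplicative Archimedeanity enters nontrivially for the first time.

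Second I would construct a logarithm-like map relative to a chosen $u \in F$ with $u > 1$, via the Dedekind-cut formula
\[
\phi(x) \;\coloneqq\; \sup\bigl\{ m/n \in \Q \,:\, u^m \le x^n \bigr\}, \qquad x \in F^\times,
\]
extended by $\phi(0) \coloneqq 0$ in the multiplicative picture of the target (or $-\infty$ in the additive picture). Multiplicative Archimedeanity guarantees $\phi$ takes finite values, and a standard H\"older-type argument shows that $\phi$ is a monotone monoid homomorphism $(F^\times,\cdot) \to (\R_+^\times,\cdot)$ that is strictly monotone modulo $\approx$. The induced map $F/\!\approx \,\to\, \R_+$ is therefore an order embedding of the multiplicative and order structure. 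The degenerate case where no $u > 1$ exists is handled separately: then $F^\times$ collapses to a single $\approx$-class and $F$ embeds into $\B \subset \TR_+$.

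The decisive final step is to analyze the additive structure by examining $1+1$. If $1+1 > 1$, so that $\phi(1+1) > 1$ in $\R_+$, then a further Archimedean argument applied to the addition shows that $\phi$ behaves like a genuine logarithm, yielding $\phi(x+y) = \phi(x) + \phi(y)$ in the usual addition on $\R_+$ and hence an embedding $F \hookrightarrow \R_+$. If instead $1+1 \approx 1$, then $F$ is additively idempotent modulo $\approx$, so the total preorder forces $x+y \approx \max(x,y)$ at the level of $\approx$-classes, giving $\phi(x+y) = \max\bigl(\phi(x),\phi(y)\bigr)$ and an embedding $F \hookrightarrow \TR_+$. The main obstacle will be exactly this dichotomy: multiplicative Archimedeanity must be used in an essential way to rule out any intermediate additive behavior and pin $\phi$ down to one of the two canonical forms on addition; everything else reduces to bookkeeping about when to apply the reduction $F \rightsquigarrow F^\op$.
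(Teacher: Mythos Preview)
This theorem is quoted here from Part~I as background and is not proved in the present paper, so there is no proof to compare against directly. However, the paper itself tells you something important about the statement: at the start of \Cref{malt_full} it explicitly describes \Cref{real_or_tropical} as ``an embedding theorem for multiplicatively Archimedean \emph{totally} preordered semifields,'' and the whole point of \Cref{malt_full} is to generalize from total to merely full preorders. So totality is a hypothesis, not a conclusion.

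Your proposal has a genuine gap precisely here. You write that you would ``show that $F$ must be totally preordered modulo $\approx$,'' using multiplicative Archimedeanicity. This cannot work: \Cref{dual_numbers_semifield} exhibits a multiplicatively Archimedean preordered semifield (the arctic semifield $\R_{(+)}[X]/(X^2)$) in which $0$ and $1$ are incomparable, and which the paper shows admits no order embedding into any of $\R_+,\R_+^\op,\TR_+,\TR_+^\op$. So the implication ``multiplicatively Archimedean $\Rightarrow$ total'' is false, and without totality the theorem itself fails. The same issue infects your first reduction: from $1 \not\le 0$ and the semifield axiom you conclude $1 > 0$, but this inference already presupposes that $0$ and $1$ are comparable.

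If you instead take totality as given (as intended), the rest of your outline---reduce to $1>0$ via $F\mapsto F^\op$, build a H\"older-type map $\phi$ from Dedekind cuts on $\{m/n : u^m \le x^n\}$, then split on $1+1 \approx 1$ versus $1+1 > 1$---is a reasonable strategy and likely close in spirit to the Part~I argument. The one place that remains genuinely sketchy is the additive compatibility of $\phi$ in the temperate case: ``a further Archimedean argument'' is doing a lot of work, and you should expect to need nontrivial inequalities (of the flavor of \Cref{intermezzo}) to pin down $\phi(x+y) = \phi(x)+\phi(y)$ rather than merely $\phi(x+y) \ge \max(\phi(x),\phi(y))$.
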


A surprising construction on preordered semifields is the \newterm{categorical product} (I.3.19). If $F_1$ and $F_2$ are preordered semifields, then their categorical product has underlying set
\[
	(F_1^\times \times F_2^\times) \cup \{(0, 0)\}
\]
and carries the componentwise algebraic operations and the componentwise preorder, and it is straightforward to see that it is a preordered semifield again (having the universal property of a categorical product).

Next, we recall the relevant definitions around polynomial growth (I.3.27). If $S$ is a preordered semiring, then a pair of nonzero elements $u_-, u_+ \in S$ is a \newterm{power universal pair} if $u_- \le u_+$ and for every nonzero $x,y \in S$ with $x \le y$, there is $k \in \N$ such that
\beq
	\label{pgrowth}
	y u_-^k \le x u_+^k.
\eeq
It follows that the same property holds already if merely $x \sim y$ (I.3.28).
We say that $S$ is of \newterm{polynomial growth} if it has a power universal pair. A \newterm{power universal element} is $u \in S$ such that $(1,u)$ is a power universal pair. A preordered semifield of polynomial growth has a power universal element given by $u \coloneqq u_+ u_-^{-1}$. Therefore when working with preordered semifields of polynomial growth, we will always work with a power universal element. On the other hand, there are preordered semirings of polynomial growth that do not have a power universal element but merely a power universal pair, such as the polynomial semiring $\N[X_1,\ldots,X_d]$ equipped with the coefficientwise preorder (I.3.37).

\Cref{real_or_tropical} implies the following:

\begin{cor}[I.4.3]
	\label{arch_trunc}
	Let $F$ be a totally preordered semifield of polynomial growth. Then there is a monotone homomorphism $\phi : F \to \mathbb{K}$ with $\mathbb{K} \in \{\R_+, \R_+^\op, \TR_+, \TR_+^\op\}$ such that $\phi(u) > 1$ for every power universal element $u > 1$.
\end{cor}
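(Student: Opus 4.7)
The strategy is to reduce the statement to \Cref{real_or_tropical} by constructing a monotone semiring homomorphism from $F$ into a multiplicatively Archimedean semifield, which I would obtain as a quotient of $F$. Since $F$ is totally preordered, exactly one of $1 \ge 0$ or $1 \le 0$ holds. In the latter case I pass to $F^\op$, noting that $\{\R_+, \R_+^\op, \TR_+, \TR_+^\op\}$ is closed under $(\cdot)^\op$ and that $u^{-1}$ is then power universal in $F^\op$ with $u^{-1} > 1$; pushing a hypothetical $\psi : F^\op \to \mathbb{K}$ with $\psi(u^{-1}) > 1$ back to $\phi : F \to \mathbb{K}^\op$ via inversion in $\mathbb{K}$ yields $\phi(u) > 1$. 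Thus I may assume $1 \ge 0$.

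Now fix a power universal element $u > 1$. By totality and polynomial growth, every nonzero $x \in F$ satisfies $u^{-k} \le x \le u^k$ for some $k \in \N$, so the formula
\[
	w(x) \,\coloneqq\, \inf\left\{\, \tfrac{k}{n} \,:\, k \in \Z,\: n \in \Nplus,\: x^n \le u^k \,\right\}
\]
defines a real-valued map $w : F^\times \to \R$. Routine estimates show that $w$ is a monotone group homomorphism $(F^\times, \cdot, \le) \to (\R, +, \le)$ with $w(u) = 1$. I then set $x \approx y$ iff $w(x) = w(y)$, with $0$ in its own class.

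The main technical step is to show that $\approx$ is a semiring congruence, so that the quotient $\bar F \coloneqq F/\!\approx$ inherits a semifield structure with an induced injective monotone group homomorphism $\bar F^\times \hookrightarrow \R$, making it multiplicatively Archimedean. Multiplicative compatibility is immediate from $w(xy) = w(x) + w(y)$. The subtle point is additive compatibility---that $w(x) = w(y)$ forces $w(x+z) = w(y+z)$ for all $z \in F$---and this is the main obstacle. I would approach it via the sandwich $\max(x,z) \le x + z \le 2\max(x,z)$ (available because $1 \ge 0$) together with $2 \le u^m$ for some $m$ (power universality applied to $1 \le 2$), which bound $w(x+z)$ between $\max(w(x), w(z))$ and $w(2) + \max(w(x), w(z))$; refining this estimate by exploiting the infimum defining $w$ and the totality of the preorder is expected to pin $w(x+z)$ down as a function of $w(x)$ and $w(z)$ alone.

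Granting this, \Cref{real_or_tropical} provides an order embedding $\bar F \hookrightarrow \mathbb{K}$ for some $\mathbb{K} \in \{\R_+, \R_+^\op, \TR_+, \TR_+^\op\}$, and composing with the quotient map $F \twoheadrightarrow \bar F$ gives the desired monotone homomorphism $\phi : F \to \mathbb{K}$. Since $w(u) = 1 > 0$, we have $[u] > [1]$ in $\bar F$, hence $\phi(u) > 1$. Finally, for any other power universal element $u' > 1$, power universality applied to $1 \le u$ yields $u \le (u')^k$ for some $k$, so $1 = w(u) \le k \cdot w(u')$ forces $w(u') > 0$ and therefore $\phi(u') > 1$, as required.
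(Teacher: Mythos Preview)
Your overall strategy---quotient to a multiplicatively Archimedean preordered semifield and invoke \Cref{real_or_tropical}---is the natural route to this corollary. The reduction to $1 > 0$ via $F^\op$ is correct (note that a totally preordered semifield with $1 > 0$ is automatically strict, so sums of nonzero elements stay nonzero), and the final argument for an arbitrary power universal $u'$ is fine.

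The genuine gap is exactly where you flag it: additive compatibility of the congruence. Your proposed sandwich $\max(x,z) \le x+z \le 2\max(x,z)$ only yields
\[
\max\bigl(w(x),w(z)\bigr) \;\le\; w(x+z) \;\le\; w(2) + \max\bigl(w(x),w(z)\bigr),
\]
and since $w(2)$ is typically positive this interval has nonzero width; it is not at all clear how ``refining this estimate'' would collapse it to a single value. As written, the argument does not close.

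The gap is, however, easy to fill by a different and shorter argument that bypasses the sandwich entirely. Given nonzero $x,y$ with $w(x) = w(y)$, use totality to assume $x \le y$ and set $a \coloneqq y x^{-1} \ge 1$, so that $w(a) = 0$. Then for any $z$,
\[
x + z \;\le\; y + z \;=\; a x + z \;\le\; a x + a z \;=\; a(x+z),
\]
the last step because $a \ge 1$ implies $z \le a z$ by monotonicity of multiplication by $z$. Applying $w$ squeezes:
\[
w(x+z) \;\le\; w(y+z) \;\le\; w(a) + w(x+z) \;=\; w(x+z).
\]
The same trick handles monotonicity of addition in the quotient (if $w(x) \le w(y)$ but $y \le x$, then $w(x) = w(y)$ and the above applies). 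Replace the sandwich heuristic with this two-line squeeze and your proof goes through.
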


Based on all of these results, we had also developed an abstract Vergleichsstellensatz for preordered semirings $S$ with $1 > 0$ and having a power universal element $u$. This uses the \newterm{test spectrum} (I.7.1), which under these assumptions\footnote{In \Cref{n2}, we will introduce a different definition of test spectrum applying to a different class of preordered semirings. A suitable general definition of spectrum of a preordered semiring remains to be found.} is the disjoint union of the monotone homomorphisms to $\R_+$ and $\TR_+$, where the latter are suitably normalized,
\begin{align*}
	\Sper{S} \coloneqq	& \{ \textrm{ monotone hom } \phi : S \to \R_+ \} \\
				& \sqcup \{ \textrm{ monotone hom } \phi : S \to \TR_+ \textrm{ with } \phi(u) = e \},
\end{align*}
and equipped with a certain topology that turns this set into a compact Hausdorff space (I.7.9).
The resulting Vergleichsstellensatz reads as follows.

\begin{thm}[I.7.15]
	\label{Imain}
	Let $S$ be a preordered semiring with $1 > 0$ and a power universal element $u$. Then for nonzero $x, y \in S$, the following are equivalent:
	\begin{enumerate}
		\item $\phi(x) \le \phi(y)$ for all $\phi \in \Sper{S}$.
		\item For every $\eps > 0$ we have
			\[
				x^n \le u^{\lfloor \eps n \rfloor} y^n \qquad \forall n \gg 1.
			\]
	\end{enumerate}
	Moreover, if $\phi(x) < \phi(y)$ for all $\phi \in \Sper{S}$, then also the following hold:
	\begin{enumerate}[resume]
		\item\label{Iasymp} There is $k \in \N$ such that
			\[
				u^k x^n \le u^k y^n \qquad \forall n \gg 1.
			\]
		\item If $y$ is a power universal element itself, then also
			\[
				x^n \le y^n \qquad \forall n \gg 1.
			\]
		\item\label{Icat} There is nonzero $a \in S$ such that
			\[
				a x \le a y.
			\]
			More concretely, $a = u^k \sum_{j=0}^n x^j y^{n-j}$ works for some $k$ and all $n \gg 1$.
	\end{enumerate}
\end{thm}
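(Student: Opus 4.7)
The easy direction (b) $\Rightarrow$ (a) is essentially an application of the homomorphism property: for $\phi : S \to \R_+$, applying $\phi$ to $x^n \le u^{\lfloor \eps n \rfloor} y^n$ yields $\phi(x)^n \le \phi(u)^{\lfloor \eps n \rfloor} \phi(y)^n$, and taking $n$-th roots and letting $n \to \infty$ then $\eps \to 0$ gives $\phi(x) \le \phi(y)$. The tropical case is the same in the additive picture, where the normalization $\phi(u) = e$ makes the slack $\eps n$ negligible after dividing by $n$.

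The main work lies in the strict direction, from which the non-strict case is then to be recovered by perturbation. My plan is to produce the catalytic certificate (\ref{Icat}) first via a Positivstellensatz-style contradiction, and then derive the asymptotic statements (\ref{Iasymp}) and (d) from it. For the catalytic step, define the \emph{catalytic preorder} $\le_c$ on $S$ by $s \le_c t$ iff some nonzero $a \in S$ satisfies $as \le at$; this is easily checked to be a semiring preorder extending $\le$. If $x \not\le_c y$, pass to an appropriate semifield quotient of $S$ (modulo the congruence generated by $\sim_c$, then localizing away the nonzero elements) in such a way that $y \prec x$ is preserved, and invoke \Cref{semifield_main} to extend to a total semifield preorder still containing $y \prec x$. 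Power universality of $u$ makes this totally preordered semifield multiplicatively Archimedean, so \Cref{arch_trunc} supplies an order embedding into one of $\R_+, \R_+^\op, \TR_+, \TR_+^\op$. The resulting monotone homomorphism lies in $\Sper{S}$ and satisfies $\phi(x) > \phi(y)$, contradicting the strict spectral hypothesis.

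The asymptotic certificates should follow by leveraging polynomial growth together with the catalyst. From $ax \le ay$ one obtains $a x^n \le a y^n$ by iteration, and since $u$ is power universal one can replace the catalyst $a$ by a power $u^k$ at the cost of absorbing the necessary slack, producing $u^k x^n \le u^k y^n$ for all $n \gg 1$. When $y$ itself is power universal, the factor $u^k$ can be further absorbed into $y^k$, yielding $x^n \le y^n$. For the explicit catalyst formula $a = u^k \sum_{j=0}^n x^j y^{n-j}$, one uses the subtraction-free observation that $x \cdot a$ and $y \cdot a$ agree on the terms $x^j y^{n+1-j}$ for $1 \le j \le n$, differing only by the boundary contributions $x^{n+1}$ and $y^{n+1}$; the inequality $ax \le ay$ therefore reduces to $u^k x^{n+1} \le u^k y^{n+1}$, which has already been established. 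Finally, the non-strict implication (a) $\Rightarrow$ (b) is obtained by applying the strict-case asymptotic certificate to the perturbed pair $(x^n, u^{\lceil \eps n \rceil} y^n)$, for which the spectral inequalities become strict thanks to $\phi(u) > 1$.

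The main obstacle I anticipate is the construction of the semifield quotient/localization in the catalytic step: since $S / \!\sim_c$ need not be a semidomain, the passage to a semifield carrying a preorder in which $y \prec x$ still holds requires care with the zero ideal and with preorder-compatible cancellation. Once this technical hurdle is cleared, the remainder is essentially a clean application of the preorder extension and embedding results already developed earlier in Part I.
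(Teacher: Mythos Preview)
First, a framing remark: this paper does not prove \Cref{Imain}; it is quoted from Part~I as background. The only information here about its proof is indirect, via the proof of \Cref{main2}, which says that after obtaining a catalyst one finishes ``by verbatim the same arguments as the proof of \Cref{Imain} conducted in Part~I, where the compactness of $\Sper{S}$ now enters through the logarithmic comparison function $\lc_{x,y}$ being strictly bounded away from zero.'' So the order of operations you propose---catalytic first, asymptotic afterwards---matches the paper, and your route to the catalytic certificate (pass to a semifield, extend to a total preorder via \Cref{semifield_main}, embed via \Cref{arch_trunc}) is essentially the argument behind \Cref{main1}.

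The genuine gap is your passage from the catalytic certificate to item~\ref{Iasymp}. From $ax \le ay$ you correctly obtain $a x^n \le a y^n$ for all $n$. But ``replacing $a$ by $u^k$'' via power universality only yields, using $1 \le a u^\ell$ and $a \le u^\ell$, the chain
\[
	u^k x^n \;\le\; a u^{k+\ell} x^n \;\le\; a u^{k+\ell} y^n \;\le\; u^{k+2\ell} y^n,
\]
which carries a \emph{fixed} additive slack $2\ell$ on the exponent of $u$ that does not shrink as $n$ grows. This gives you $x^n \le u^{\mathrm{const}}\, y^n$ (hence the non-strict direction of (b)), but not $u^k x^n \le u^k y^n$. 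The missing ingredient is exactly compactness of $\Sper{S}$: one needs a uniform $\delta>0$ with $\lc_{x,y} \ge \delta$, so that for some integer $m$ the pair $(u x^m, y^m)$ again satisfies the strict spectral hypothesis; the resulting second catalytic relation $a' u x^m \le a' y^m$ has a built-in gain $u$ on the left, and iterating it produces $a' u^n x^{mn} \le a' y^{mn}$. Combining with the original $a x^j \le a y^j$ and then sandwiching $aa'$ by powers of $u$ now does kill the slack for large $n$, because the gain grows linearly in $n$ while the slack from $aa'$ is bounded. Your sketch never introduces such a gain, and without it the argument stalls at~(b).

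Two smaller points. Your perturbation for (a)$\Rightarrow$(b) should be applied to a \emph{fixed} pair such as $(x^m, u y^m)$ rather than the $n$-dependent $(x^n, u^{\lceil \eps n\rceil} y^n)$; the latter makes the quantifiers circular. And \Cref{arch_trunc} is stated for totally preordered semifields of polynomial growth, not multiplicatively Archimedean ones---that is what power universality of $u$ hands you after passing to the total extension.
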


This theorem specializes to a version of Strassen's Vergleichsstellensatz~\cite[Corollary~2.6]{strassen} for $u = 2$.
In Part I, we also showed how the classical Positivstellensatz of Krivine--Kadison--Dubois can be derived from the latter, and therefore also from our Vergleichsstellensatz, in an elementary way (I.8.4).

\section{Further basic definitions and observations}
\label{further}

Later in the paper, and in particular in the statements of our main results, we will use a few additional concepts that we introduce now.

\begin{defn}
	A semidomain $S$ has \newterm{quasi-inverses} if for every nonzero $x \in S$ there are $n \in \Nplus$ and $y \in S$ with $xy = n$.
\end{defn}

As for some basic examples, every semifield trivially has quasi-inverses. $\N$ also has quasi-inverses. So does every semidomain of the form $\Nplus^n \cup \{(0,\ldots,0)\}$, where the semiring structure is given by the componentwise operations.

The relevance of quasi-inverses is explained by the following observation.

\begin{lem}
	\label{frac_quasi}
	Let $S$ be a semidomain with quasi-complements and quasi-inverses. Then the semifield of fractions $\Frac(S)$ has quasi-complements too.
\end{lem}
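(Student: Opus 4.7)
The plan is to show that every element of $\Frac(S)$ can be normalized to the form $u/n$ with $u \in S$ and $n \in \Nplus$, and then to produce a quasi-complement of $u/n$ in $\Frac(S)$ by combining a quasi-complement of $u$ in $S$ with an appropriate multiple of $u$ itself.

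First I would take an arbitrary element $w \in \Frac(S)$ and write $w = x/z$ with $x \in S$ and $z \in S$ nonzero. Applying the quasi-inverse property to $z$, I obtain $z' \in S$ and $n \in \Nplus$ with $zz' = n$, so that
\[
    w \;=\; \frac{x}{z} \;=\; \frac{xz'}{zz'} \;=\; \frac{u}{n}, \qquad u \coloneqq xz' \in S.
\]
This reduces the problem to producing a quasi-complement of $u/n$ for $u \in S$ and $n \in \Nplus$.

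Next, using that $S$ has quasi-complements, I would pick $v \in S$ and $m \in \N$ with $u + v = m$. The key trick is to form the element
\[
    u' \;\coloneqq\; (n-1)u + nv \;\in\; S,
\]
which makes sense because $n - 1 \in \N$ since $n \in \Nplus$. Then a direct computation gives
\[
    u + u' \;=\; nu + nv \;=\; n(u+v) \;=\; nm,
\]
so in $\Frac(S)$ we obtain
\[
    \frac{u}{n} + \frac{u'}{n} \;=\; \frac{u + u'}{n} \;=\; \frac{nm}{n} \;=\; m \;\in\; \N.
\]
Hence $u'/n \in \Frac(S)$ is a quasi-complement of $u/n = w$, completing the proof.

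There is no real obstacle here beyond the algebraic trick of absorbing the factor $n$ in the denominator by inflating the quasi-complement $v$ of $u$ into $(n-1)u + nv$; the essential point is simply that $n - 1 \in \N$, so that this inflated element remains in $S$ rather than escaping into a formal difference.
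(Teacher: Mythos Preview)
Your proof is correct and uses the same core ideas as the paper's proof: apply quasi-inverses to reduce to a natural-number denominator, then lift a quasi-complement from $S$ and adjust by a suitable multiple to absorb that denominator. The organization differs slightly: the paper first treats the special case $\tfrac{1}{b}$ (via a quasi-inverse of $b$ and a quasi-complement of the resulting numerator), and then reduces the general $\tfrac{a}{b}$ to $\tfrac{m}{b}$ using a quasi-complement of $a$, invoking the special case afterwards. Your argument collapses these two passes into one by immediately writing $\tfrac{a}{b} = \tfrac{u}{n}$ and then producing the complement $(n-1)u + nv$ directly; this is a tidy streamlining of the same mechanism.
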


\begin{proof}
	We first construct quasi-complements for fractions of the form $\frac{1}{b} \in \Frac(S)$ for nonzero $b \in S$.
	We choose a quasi-inverse $c \in S$ with $bc = n \in \Nplus$. Then $\frac{1}{b} = \frac{c}{n}$ in $\Frac(S)$. Choosing a quasi-complement $d$ for $c$ makes $\frac{c}{n} + \frac{d}{n}$ into a rational number. By further adding a suitable additional rational number, we therefore obtain a natural number, as was to be shown.

	For arbitrary $\frac{a}{b} \in \Frac(S)$, we choose a quasi-complement $c \in S$ for $a$, resulting in $a + c = m \in \N$. Then
	\[
		\frac{a}{b} + \frac{c}{b} = \frac{m}{b},
	\]
	and the claim follows by the previous case upon multiplying any quasi-complement of $\frac{1}{b}$ by $m$.
\end{proof}

\begin{lem}
	\label{frac_sim}
	Let $S$ be a preordered semidomain. Then the identity map induces an isomorphism of semifields
	\[
		\Frac(S / \!\sim) \cong \Frac(S) / \!\sim.
	\]
\end{lem}

Note that this slightly abuses notation by using the same symbol $\sim$ for the equivalence relation generated by the preorder on $S$ and for the one generated by the preorder on $\Frac(S)$.

\begin{proof}
	In both semifields, the elements are given by equivalence classes of formal fractions, and we write $\equiv$ for the equivalence relation in each case.
	In $\Frac(S / \!\sim)$, the equivalence relation can be described as the one generated by the following two basic equivalences:
	\begin{equation}
		\label{frac_sim1}
		\frac{x}{a} \equiv \frac{rx}{ra} \:\textrm{ if }\: r \in S \setminus \{0\},
		\qquad 
		\frac{x}{a} \equiv \frac{y}{a} \:\textrm{ if }\: x \le y,
		\qquad
		\frac{x}{a} \equiv \frac{x}{b} \:\textrm{ if }\: a \le b.
	\end{equation}
	On $\Frac(S) / \!\sim$, we still have the first kind of generating equivalences, but the second and third kind is replaced by
	\begin{equation}
		\label{frac_sim2}
		\frac{x}{a} \equiv \frac{y}{b} \:\textrm{ if }\: \exists r \in S \setminus \{0\} : x b r \le y a r.
	\end{equation}
	If either of the equivalences in~\eqref{frac_sim1} holds, then it is clear that~\eqref{frac_sim2} holds as well.
	Conversely if~\eqref{frac_sim2} holds, then we get
	\[
		\frac{x}{a} \equiv \frac{x b r}{a b r} \equiv \frac{y a r}{a b r} \equiv \frac{y}{b}
	\]
	with respect to~\eqref{frac_sim1}.
	This shows that the two equivalence relations coincide, and therefore the identity map induces an isomorphism of semifields as claimed.
\end{proof}

\begin{defn}
	Let $S$ and $T$ be preordered semirings. A monotone homomorphism $\phi : S \to T$ is \newterm{degenerate} if it factors through $S / \!\sim$, or equivalently if for all $x, y \in S$,
	\[
		x \le y \quad\Longrightarrow\quad \phi(x) = \phi(y).
	\]
	Otherwise $\phi$ is \newterm{nondegenerate}.
\end{defn}

The next definition refers to unordered structures only, but will later be used in the context of preordered semirings $S$ with monotone $D$ and degenerate $\phi$.

\begin{defn}
	Let $S$ be a semiring and $\phi : S \to \R_+$ a homomorphism. Then a \newterm{$\phi$-derivation} is a map $D : S \to \R$ such that the \newterm{Leibniz rule}
	\[
		D(xy) = \phi(x) D(y) + D(x) \phi(y)
	\]
	holds for all $x, y \in S$.
\end{defn}

Note that the set of $\phi$-derivations is a vector space over $\R$.
And if $S$ is a preordered semiring, then the set of monotone $\phi$-derivations is a convex cone inside this vector space.
In either case, the $\phi$-derivations can be thought of geometrically as tangent vectors to the spectral point $\phi$. 

Our results will take a slightly stronger form for semirings which have a scalar multiplication by $\R_+$.

\begin{defn}
	A \newterm{semialgebra}\footnote{Since we will not consider scalar multiplication by anything other than $\R_+$, we omit explicit mention of the semifield of scalars.} $S$ is a semiring together with a scalar multiplication
	\begin{align*}
		\R_+ \times S	& \longrightarrow S	\\
		(r, x)		& \longmapsto rx
	\end{align*}
	that is a commutative monoid homomorphism in each argument and satisfies the following additional laws:
	\begin{itemize}
		\item $1x = x$,
		\item $(rx)(sy) = (rs)(xy)$.
	\end{itemize}
\end{defn}

In other words, a semialgebra is a semiring which at the same time is a semimodule over $\R_+$~\cite[Chapter~14]{golan} in such a way that the multiplication of $S$ is bilinear.\footnote{The definition of semimodule includes the additional law $r(sx) = (rs)x$, but this is implied by the ones we have assumed.}
As with algebras over commutative rings, a semialgebra is equivalently a semiring $S$ equipped with a semiring homomorphism $\R_+ \to S$.

\begin{defn}
	For a semiring $S$, we write $S \otimes \Z$ for the \newterm{ring generated by $S$}, i.e.~the initial object in the category of commutative rings equipped with a semiring homomorphism $S \to S \otimes \Z$.
\end{defn}

As is well-known, one obtains $S \otimes \Z$ by applying the Grothendieck construction to $S$, which means taking the elements of $S \otimes \Z$ to be formal differences of elements of $S$.
If $S$ is a semialgebra, then $S \otimes \Z$ is an $\R$-algebra in the obvious way.

Changing topic, we will also need a piece of terminology for preorders that are not necessarily total, but merely total on connected components.

\begin{defn}
	\label{totalfull}
	\begin{enumerate}
		\item A preorder relation $\le$ is \newterm{full} if
			\[
				x \sim y \quad \Longrightarrow \quad x \le y \enspace \lor \enspace x \ge y
			\]
			for all $x$ and $y$.
	\end{enumerate}
\end{defn}

Here is an equivalent characterization.

\begin{lem}
	\label{fullchar}
	A preorder is full if and only if the following holds for all $a,x,y$:
	\[
		a \le x,y \enspace \lor \enspace x,y \le a \quad \Longrightarrow \quad x \le y \enspace \lor \enspace x \ge y.
	\]
\end{lem}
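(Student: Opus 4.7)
The plan is to prove the two implications separately, with the forward direction essentially immediate and the converse proceeding by induction on the length of a zigzag chain witnessing the relation $\sim$.

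For the forward implication, assume the preorder is full and suppose $a \le x,y$. Then $x \sim a \sim y$, so by transitivity of $\sim$ we have $x \sim y$, and fullness gives $x \le y$ or $x \ge y$. The case $x,y \le a$ is symmetric.

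For the converse, assume the displayed implication of the lemma and suppose $x \sim y$. By definition of $\sim$ as the equivalence relation generated by $\le$, there is a finite zigzag $x = z_0, z_1, \ldots, z_n = y$ with $z_i \le z_{i+1}$ or $z_i \ge z_{i+1}$ for each $i$. I would prove by induction on $n$ that $x \le z_n$ or $x \ge z_n$. The base case $n=0$ is trivial. For the inductive step, by hypothesis we already know $x \le z_n$ or $x \ge z_n$, and we need to upgrade this to a comparability between $x$ and $z_{n+1}$. Splitting into the four subcases according to the direction of each of $x$ versus $z_n$ and $z_n$ versus $z_{n+1}$, two of them give the conclusion directly by transitivity (namely $x \le z_n \le z_{n+1}$ and $x \ge z_n \ge z_{n+1}$), while the remaining two mixed cases place both $x$ and $z_{n+1}$ either above or below $z_n$, whereupon the assumed implication applies with $a \coloneqq z_n$ to give $x \le z_{n+1}$ or $x \ge z_{n+1}$.

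The argument is essentially a standard zigzag reduction, so there is no real obstacle beyond writing out the four cases carefully; the only substantive observation is that the hypothesis is exactly designed to eliminate each ``turn'' in a zigzag, and hence suffices to reduce any $\sim$-relation to a direct $\le$-comparability.
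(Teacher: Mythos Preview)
Your proof is correct and follows essentially the same approach as the paper. The paper packages the converse slightly more abstractly---defining $x \simeq y$ to mean $x \le y \lor x \ge y$ and showing that $\simeq$ is transitive (hence an equivalence relation containing $\sim$)---but the transitivity verification is exactly your inductive step with the same four-case analysis, so the arguments are the same.
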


\begin{proof}
	This condition is clearly necessary. For sufficiency, let us assume that the condition holds. We temporarily write $x \simeq y$ as shorthand for $x \le y \: \lor \: x \ge y$. In order to prove that this is indeed the equivalence relation generated by $\le$, which is $\sim$, we need to show that it is transitive. So let $x \simeq y \simeq z$. If $x \le y \le z$, then we can conclude $x \le z$ and hence $x \simeq z$ by transitivity of $\le$, and likewise if $x \ge y \ge z$. If $x \le y \ge z$ instead, then applying the assumption with $a \coloneqq y$ results in the desired $x \simeq z$, and similarly if $x \ge y \le z$.
\end{proof}

Clearly every total preorder is full, but not conversely. For example, the trivial preorder on any set is full.

\begin{lem}
	\label{full_q}
	Let $F$ be a preordered semifield which is isomorphic to $\Q_+$ as a semifield.
	Then the preorder on $\Q_{>0}$ is one of the following:
	\begin{enumerate}
		\item $r \le s$ for all $r, s \in \Q_{>0}$.
		\item The usual order or its opposite.
		\item $r \le s$ if and only if $r = s$ for all $r, s \in \Q_{>0}$.
	\end{enumerate}
\end{lem}

\begin{proof}

\end{proof}

\section{Inequalities in preordered semirings and semifields}
\label{intermezzo}

In this section, we prove some elementary but nontrivial results on implications between inequalities in preordered semirings and semifields (with the focus on the latter).
These will form an important building block for the deeper results that we develop in the subsequent sections.

\subsection*{Chaining inequalities in preordered semirings}

We start with some observations on chaining inequalities. In this subsection, everything takes place in a preordered semiring $S$, without any additional hypotheses.

\begin{lem}
	\label{sharpen_add}
	If $a + x \le a + y$, then also $a + nx \le a + ny$ for every $n \in \N$.
\end{lem}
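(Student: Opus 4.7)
The plan is to proceed by induction on $n$. The base case $n = 0$ is trivial ($a \le a$), and $n = 1$ is exactly the hypothesis. For the inductive step, assume $a + kx \le a + ky$; I want to deduce $a + (k+1)x \le a + (k+1)y$.

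The key idea is to insert the intermediate term $a + ky + x$ and reach it from two directions. On one side, adding $x$ to both sides of the inductive hypothesis (using monotonicity of addition) yields
\[
	a + (k+1)x \;\le\; a + ky + x.
\]
On the other side, adding $ky$ to both sides of the original hypothesis $a + x \le a + y$ yields
\[
	a + x + ky \;\le\; a + (k+1)y.
\]
Since the right-hand side of the first inequality is the left-hand side of the second, transitivity delivers $a + (k+1)x \le a + (k+1)y$, closing the induction.

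The main thing to be careful about is that one cannot simply iterate the hypothesis by adding $x$ or $y$ separately, since nothing tells us that $x$ and $y$ are individually comparable; only the pairs $a+x$ and $a+y$ are. The trick is that by combining \emph{one} copy of the given inequality with the inductive hypothesis through the waypoint $a + ky + x$, the monotonicity axioms of a preordered semiring suffice, and no further assumptions on $S$ are required.
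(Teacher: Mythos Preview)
Your proof is correct and follows essentially the same approach as the paper's: induction on $n$, combining one instance of the original hypothesis with the induction hypothesis via monotonicity of addition and transitivity. The only cosmetic difference is the order of the two steps---the paper passes through the intermediate term $a + y + nx$ rather than your $a + ky + x$.
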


\begin{proof}
	The claim is trivial for $n = 0$. For the induction step, we use
	\begin{align*}
		a + (n+1)x	& = (a + x) + nx \le (a + y) + nx \\
				& = y + (a + nx) \le y + (a + ny) = a + (n+1)y. \qedhere
	\end{align*}
\end{proof}

We will routinely use this trick in the rest of the paper and simply call it \newterm{chaining}. A stronger statement along the same lines is as follows.

\begin{lem}
	\label{add_to_mult}
	Let $p = \sum_i r_i X^i \in \N[X]$ be any polynomial with coefficients $r_i > 0$ for all $i = 0, \ldots, \deg(p)$. If $x + 1 \le y + 1$ in $S$, then also $p(x) \le p(y)$, 
\end{lem}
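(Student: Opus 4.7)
The plan is to argue by induction on $d = \deg p$. For $d = 0$ the claim is trivial since $p(x) = r_0 = p(y)$, and for $d = 1$ I would apply \Cref{sharpen_add} with $a = 1$ and $n = r_1$ to obtain $1 + r_1 x \le 1 + r_1 y$, then add $r_0 - 1 \in \N$ (which is nonnegative because $r_0 \ge 1$) to both sides. The substantive case is the induction step for $d \ge 2$.

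The key tool for the induction step is a ``support-chaining'' principle obtained from \Cref{sharpen_add}: for any $A, C \in \N[X, Y]$ such that every monomial appearing in $C$ also appears in $A$, one has $A(x,y) + x \cdot C(x,y) \le A(x, y) + y \cdot C(x, y)$ in $S$. To derive it, decompose $A = \sum_i M_i$ as a sum of monomials (with multiplicity) and write $C = \sum_i n_i M_i$ with $n_i \in \N$; this is possible precisely because every monomial of $C$ has a match in $A$. From $M_i + M_i x \le M_i + M_i y$ (multiply $x + 1 \le y + 1$ by $M_i$) and \Cref{sharpen_add} with multiplier $n_i$, one gets $M_i + n_i M_i x \le M_i + n_i M_i y$; summing over $i$ gives the claim.

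With this tool, I would build an explicit chain $p(x) = P_0(x, y) \le P_1(x, y) \le \cdots \le P_d(x, y) = p(y)$ through the interpolating polynomials
\[
	P_k(X, Y) := \sum_{i=0}^{k} r_i Y^i + Y^k \sum_{j=k+1}^{d} r_j X^{j-k}, \qquad k = 0, 1, \ldots, d,
\]
so that indeed $P_0 = p(X)$ and $P_d = p(Y)$. Each transition $P_k \le P_{k+1}$ would be realized by a sub-chain of support-chaining moves that converts the monomials $r_j X^{j-k} Y^k$ into $r_j X^{j-k-1} Y^{k+1}$, processed in decreasing order of $j$ from $d$ down to $k+1$, each move using the pivot monomial $X^{j-k-1} Y^k$ (i.e.\ $C = r_j X^{j-k-1} Y^k$).

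The main obstacle is the bookkeeping needed to verify that at every step the pivot actually lies in the support of the current polynomial (so that the support-chaining move is legal). For $j \ge k + 2$ the pivot $X^{j-k-1} Y^k$ is present as the still-unconverted term $r_{j-1} X^{(j-1)-k} Y^k$ (we convert higher $j$ first, so this term has not yet been touched), while for the last move at $j = k + 1$ the pivot reduces to $Y^k$, which is present thanks to the hypothesis $r_i > 0$ for \emph{all} $i$ (either as $r_k Y^k$ when $k \ge 1$, or as the constant $r_0$ when $k = 0$). Once this support invariant is checked at each step, transitivity of $\le$ closes the argument.
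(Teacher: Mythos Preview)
Your argument is correct, though the framing is slightly confused: you announce induction on $d$, but the chain $P_0 \le \cdots \le P_d$ together with your support-chaining principle is a self-contained direct argument that never invokes a lower-degree hypothesis. That is a minor organizational point; the substance is sound, and your verification that the pivot monomial is always present (using $r_{j-1} > 0$ for $j \ge k+2$ and $r_k > 0$ for $j = k+1$) is exactly the place where the ``all coefficients nonzero'' hypothesis enters.

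The paper proceeds differently. It runs a well-founded induction on $p$ itself, using that every polynomial with all coefficients $\ge 1$ is generated from $1$ by the operations $p \mapsto 1 + p$ and $p \mapsto 1 + Xp$. It first proves the auxiliary one-variable inequality $1 + x\,p(y) \le 1 + y\,p(y)$ by this induction, and then the main claim by the same induction, where the step for $1 + Xp$ reads $1 + x\,p(x) \le 1 + x\,p(y) \le 1 + y\,p(y)$ (the first inequality by induction hypothesis on $p$, the second by the auxiliary statement). Compared to your approach, the paper's proof is shorter and avoids the two-variable bookkeeping by isolating a single pivot (namely $1$), at the cost of the extra auxiliary lemma. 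Your approach, by contrast, gives an explicit interpolating path in $\N[X,Y]$, and your support-chaining principle is a clean general-purpose lemma (a simultaneous multi-pivot version of \Cref{sharpen_add}) that could be reused elsewhere.
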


\begin{proof}
	We first prove that
	\[
		x p(y) + 1 \le y p(y) + 1
	\]
	for any such polynomial $p$. Using well-founded induction, it is enough to show that if this holds for $p$, then it also holds for $p + 1$ and for $Xp + 1$. Indeed for the former,
	\[
		x ( p(y) + 1) + 1 \le x p(y) + y + 1 \le y ( p(y) + 1 ) + 1,
	\]
	where we first use the overall assumption and then the induction assumption, whereas for the latter similarly
	\begin{align*}
		x (y p(y) + 1 ) + 1	& = x y p(y) + x + 1 \le x y p(y) + y + 1 \\[2pt]
					& = y ( x p(y) + 1 ) + 1 \le y ( y p(y) + 1 ) + 1,
	\end{align*}
	as was to be shown.

	Getting to the claim itself, we use the same type of induction on $p$. Now the first case is trivial, while the second case has induction assumption $p(x) \le p(y)$ and proves that
	\[
		1 + x p(x) \le 1 + x p(y) \le 1 + y p(y),
	\]
	where the first step is by induction assumption and the second by the auxiliary statement above.
\end{proof}

\subsection*{Some inequalities in preordered semifields}

In this subsection and the following ones, everything takes place in a preordered semifield $F$.
The next few results will be a working horse for us in~\Cref{malt_full}.

\begin{lem}
	\label{power_lemma}
	Let $x \in F^\times$. If $x + x^{-1} \ge 2$, then also the following hold for all $m,n \ge 1$:
	\begin{enumerate}[label=(\roman*)]
		\item\label{2nd_order_dominance} $m x^n + n x^{-m} \ge m + n$.
		\item\label{exponential_better} $2^{n-1}(x^n + x^{-n}) \ge (x + x^{-1})^n$.
		\item\label{supermodular} $x^{m+n} + 1 \ge x^m + x^n$.
	\end{enumerate}
	If $x + x^{-2} > 2$, then these inequalities also hold strictly.
\end{lem}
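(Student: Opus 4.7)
The plan is to prove the three inequalities in the order \ref{supermodular}, \ref{2nd_order_dominance}, \ref{exponential_better}, each building on the previous. The atomic ingredient throughout is the hypothesis rewritten as $x^{k+2}+x^k\ge 2x^{k+1}$ (obtained by multiplying $x^2+1\ge 2x$ by $x^k$), which expresses a form of discrete convexity for the sequence $k\mapsto x^k$.

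For \ref{supermodular}, I would assume without loss of generality that $m\le n$ and induct on $\min(m,n)$. After the trivial case $m=0$ and a multiplication by $x$ that reduces the step to the special case $m=1$, the core sub-claim becomes $x^{n+1}+1\ge x+x^n$ for $n\ge 1$, which I would prove by a separate induction on $n$ with the hypothesis itself as base case. The inductive step combines the preceding sub-instance (multiplied by $x$) with the hypothesis, and has to be engineered so as to avoid a final cancellation step, since $a+c\ge b+c$ does not imply $a\ge b$ in a preordered semifield. This is where the chaining machinery of \Cref{sharpen_add,add_to_mult} becomes essential: the latter in particular, applied to the hypothesis in its form $2x+1\le x^2+2$, yields $p(2x)\le p(x^2+1)$ for any polynomial $p\in\N[X]$ with strictly positive coefficients in every term, giving considerable flexibility in producing the target inequality directly.

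Granting \ref{supermodular}, I would then derive \ref{2nd_order_dominance} by iterating its consequence $x^n+x^{-m}\ge 1+x^{n-m}$ (obtained by dividing \ref{supermodular} by $x^m$ when $n\ge m$; the opposite case is symmetric) in a telescoping manner, while keeping track of integer multiplicities. For \ref{exponential_better}, I would expand $(x+x^{-1})^n$ by the binomial theorem, pair the terms $x^{n-2k}$ and $x^{-(n-2k)}$ which share the same binomial coefficient, and apply \ref{supermodular} in the form $x^j+x^{-j}\le x^n+x^{-n}$ for $|j|\le n$ (again a consequence of \ref{supermodular} after multiplication by a suitable power of $x$) to each such pair; summing with the binomial weights then gives the desired $(x+x^{-1})^n\le 2^{n-1}(x^n+x^{-n})$.

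The strict versions under the stronger hypothesis $x+x^{-2}>2$ follow by tracking strictness through each chain: the strict atomic inequality $x^3+1>2x^2$ (obtained from $x+x^{-2}>2$ by multiplying by $x^2$) can be injected into the base of each induction whenever $m,n\ne 0$, and strictness is preserved under the addition and multiplication operations that make up the chains. The main obstacle throughout is precisely the absence of additive cancellation in the ambient preordered semifield, which forces each inductive chain to be constructed so that the target inequality emerges immediately rather than being obscured by common summands that would then have to be cleaned up.
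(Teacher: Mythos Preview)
Your reduction of the general case of \ref{supermodular} to the special case $m=1$ is correct and elegant: multiplying the inductive hypothesis $(m-1,n-1)$ by $x$ and chaining with the $m=1$ instance $(1,m+n-1)$ gives
\[
x^{m+n}+1 \ \ge\ x^{m+n-1}+x \ \ge\ x^m+x^n,
\]
exactly as you sketch. Likewise, your derivation of \ref{exponential_better} from \ref{supermodular} via $x^j+x^{-j}\le x^n+x^{-n}$ and binomial pairing is sound.

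The gaps are in the two remaining steps. First, the core sub-claim $x^{n+1}+1\ge x+x^n$ is where all the difficulty hides, and your appeal to \Cref{add_to_mult} applied to $2x+1\le x^2+2$ does not produce it: the polynomials $p$ for which $p(2x)\le p(x^2+1)$ helps would need $p(2x)$ to dominate $x+x^n$ and $p(x^2+1)$ to be dominated by $x^{n+1}+1$, and no such $p$ with the required nonvanishing coefficients presents itself. Every natural inductive attempt runs into an extra summand (typically a stray $x$) that cannot be removed without additive cancellation. In fact, this sub-claim is exactly the monotonicity of $k\mapsto x^k+x^{-k}$, which the paper isolates as an auxiliary statement and proves only after a rather delicate chain argument establishing $x^2+x^{-2}\ge x+x^{-1}$ via multiplication by the invertible factor $2(x+x^{-1})$.

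Second, the passage \ref{supermodular}$\Rightarrow$\ref{2nd_order_dominance} by ``telescoping'' does not go through as stated. Summing the instances $x^{j+1}+1\ge x^j+x$ for $j=1,\ldots,n$ leaves the common block $\sum_{j=2}^n x^j$ on both sides, and chaining them one at a time lands at $x^2+(n-1)x$ rather than $(n+1)x$. The paper circumvents this entirely by reversing the logical order: it first proves the one-parameter estimate $x^{n+1}+n\ge (n+1)x$ by an induction that exploits the factorisation $x^{2m+2}+(2m+1)=x^{m+1}(x^{m+1}+x^{-(m+1)})+2m$, bootstraps this to \ref{2nd_order_dominance}, and then obtains \ref{supermodular} in one line from the identity
\[
(m+n)(x^{m+n}+1)=(mx^n+nx^{-m})x^m+(nx^m+mx^{-n})x^n,
\]
followed by division by the invertible positive integer $m+n$. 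That identity is the device that replaces the additive cancellation you are missing.
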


\begin{proof}
	We focus on the non-strict inequality case, since the strict one follows the same way upon noting that at least one inequality in each chain of inequalities will be strict.
	We first prove a few auxiliary statements that are special cases of the above claims. We routinely use the assumption in the form $x^{j+1} + x^{j-1} \ge 2 x^j$.
	\begin{enumerate}
		\item\label{simple_2norder} $x^2 + x^{-2} \ge x + x^{-1}$.

			Indeed, repeatedly applying the assumption gives
			\begin{align*}
				2 (x + x^{-1}) (x^2 + x^{-2})	& = 2 x^3 + 2 x + 2 x^{-1} + 2 x^{-3} \\
								& \ge 2 x^3 + x + 2 + x^{-1} + 2 x^{-3} \\
								& \ge x^3 + 2 x^2 + 2 + 2 x^{-2} + x^{-3} \\
								& \ge x^3 + x^2 + 2 x + 2 x^{-1} + x^{-2} + x^{-3} \\
								& \ge x^3 + x^2 + x + 2 + x^{-1} + x^{-2} + x^{-3} \\
								& \ge 3 x^2 + 2 + 3 x^{-2} \\
								& \ge 2 x^2 + 2 x + 2 x^{-1} + 2 x^{-2} \\
								& \ge 2(x^2 + 2 + x^{-2}) \\
								& = 2(x + x^{-1}) (x + x^{-1}),
			\end{align*}
			so that the claim follows upon cancelling $2(x + x^{-1})$.
		\item\label{basic_2norder} The map $n \mapsto x^n + x^{-n}$ is monotone in $n \in \N$.

			Indeed using induction on $n$, the inequality
			\[
				x^{n+1} + x^{-(n+1)} \ge x^n + x^{-n}	
			\]
			holds by assumption in the base case $n = 0$. For the induction step from $n$ to $n+1$, we compute
			\begin{align*}
				(x + x^{-1}) (x^{n+2} + x^{-(n+2)})	& = x^{n+3} + x^{n+1} + x^{-(n+1)} + x^{-(n+3)} \\
									& \ge x^{n+3} + x^{n-1} + x^{-(n-1)} + x^{-(n+3)} \\
									& \ge x^{n+2} + x^n + x^{-n} + x^{-(n+2)} \\
									& = (x + x^{-1}) (x^{n+1} + x^{-(n+1)}),
			\end{align*}
			and again cancel the term $x + x^{-1}$ from both sides. Here, the first inequality holds by induction assumption and the second by the previous item.
		\item\label{basic_2norder2} In particular, we therefore have $x^n + x^{-n} \ge 2$ for all $n \in \N$.
		\item $x^{n+1} + n \ge (n+1) x$ for all $n \in \N$.
			
			This claim is trivial for $n = 0$ and holds by assumption for $n = 1$. For all other $n$ we use induction, distinguishing the case of even exponent,
			\begin{align*}
				x^{2m+2} + (2m + 1)	& = x^{m+1} (x^{m+1} + x^{-(m+1)}) + 2m \\
							& \ge 2 x^{m + 1} + 2 m \\
							& \ge (2 m + 2) x,
			\end{align*}
			where the first inequality is by the previous item and the second by the induction assumption for $n = m$.
			The case of odd exponent is slightly more difficult: with $m \ge 1$,
			\begin{align*}
				(x + 1)(x^{2m+1} + 2m)	& = x^{2m + 2} + x^{2m + 1} + 2mx + 2m \\
							& \ge 2 x^{m+1} + x^{2m + 1} + 2mx + (2m - 1) \\
							& \ge 2 x^{m+1} + 2 m x^2 + x + (2m - 1) \\
							& \ge x^{m+1} + 2 m x^2 + (m + 2) x + (m - 1) \\
							& \ge 3 m x^2 + 3 x + (m - 1) \\
							& \ge (2 m + 1) x^2 + (2 m + 1) x \\
							& = (x + 1) (2 m + 1) x,
			\end{align*}
			where the first inequality uses $x^{m+1} + x^{-(m+1)} \ge 2$ as an instance of \ref{basic_2norder2}, the subsequent three use the induction assumption, and the final one is just the assumed $x + x^{-1} \ge 2$.
	\end{enumerate}
	We now prove the actual three claims.
	\begin{enumerate}[label=(\roman*)]
		\item This holds trivially for $n = 0$ or $m = 0$. For the induction step in $n$ assuming fixed $m \ge 1$, we use
			\begin{align*}
				(m + n) \left(m x^{n+1} + (n + 1) x^{-m} \right) & \ge m \left( m + n + 1 \right) x^n + n \left( m + n + 1 \right) x^{-m} \\[2pt]
					& \ge (m + n) (m + n + 1),
			\end{align*}
			where the first inequality holds because of $(m + n) x^{n + 1} + x^{-m} \ge (m + n + 1) x^n$ as a consequence of the previous item, and the second by the induction assumption.
		\item For any $n \ge 1$ and $j = 0,\ldots,n$, the inequality
			\[
				(n - j) x^n + j x^{-n} \ge n x^{n-2j}
			\]
			holds by the previous item. Together with standard identities for binomial coefficients, it gives
			\begin{align*}
				2^{n-1} \left( x^n + x^{-n} \right) & = \sum_{j=0}^{n-1} \binom{n-1}{n-j-1} x^n + \sum_{j=1}^n \binom{n-1}{j-1} x^{-n} \\[2pt]
								& = n^{-1} \sum_{j=0}^n \binom{n}{j} \left( (n-j) x^n + j x^{-n} \right) \\[2pt]
								& \ge \sum_{j=0}^n \binom{n}{j} x^{n-2j} = (x + x^{-1})^n, \\[2pt]
			\end{align*}
			as was to be shown.
		\item We derive this from \Cref{2nd_order_dominance},
			\begin{align*}
				(m + n) (x^{m+n} + 1)	& = (m x^n + n x^{-m}) x^m + (n x^m + m x^{-n}) x^n \\
							& \ge (m+n) (x^m + x^n),
			\end{align*}
			which already gives the claim upon dividing by $m + n$. \qedhere
	\end{enumerate}
\end{proof}

Inequalities of a similar flavour are now quite easy to derive.

\begin{lem}
	\label{power_skew3}
	Let $x \in F^\times$. If $x + x^{-1} \ge 2$, then also
	\[
		\binom{n+2}{2} x^n \le \binom{n+1}{2} x^{n+1} + \sum_{j=0}^n x^j
	\]
	for all $n \in \N$.
\end{lem}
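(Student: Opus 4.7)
The plan is to prove the inequality by induction on $n$, using only \Cref{power_lemma}\ref{2nd_order_dominance} together with the monotonicity of multiplication and addition in a preordered semifield. For $n = 0$ the inequality reads $1 \le 0 + 1$, which is trivial; this handles the base case.

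For the inductive step from $n$ to $n+1$, I would first multiply the induction hypothesis by $x$ (monotonicity) to obtain
\[
\binom{n+2}{2} x^{n+1} \le \binom{n+1}{2} x^{n+2} + \sum_{j=1}^{n+1} x^j.
\]
Next, I would add $(n+1) x^{n+2} + 1$ to both sides. Using $\binom{n+1}{2} + (n+1) = \binom{n+2}{2}$, the right-hand side collapses cleanly into $\binom{n+2}{2} x^{n+2} + \sum_{j=0}^{n+1} x^j$, which is the target right-hand side for the $n+1$ case. So the only remaining task is to verify the chaining inequality
\[
\binom{n+3}{2} x^{n+1} \le \binom{n+2}{2} x^{n+1} + (n+1) x^{n+2} + 1,
\]
which reduces via $\binom{n+3}{2} - \binom{n+2}{2} = n+2$ to the auxiliary bound
\[
(n+2) x^{n+1} \le (n+1) x^{n+2} + 1.
\]
Multiplying by $x^{-(n+1)}$ rewrites this as $n + 2 \le (n+1) x + x^{-(n+1)}$, which is exactly an instance of \Cref{power_lemma}\ref{2nd_order_dominance}. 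Chaining the two inequalities then yields the $n+1$ case.

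The only step requiring real insight is the choice of what quantity to add in the inductive step. The binomial identity $\binom{n+3}{2} - \binom{n+2}{2} = n+2$ indicates that the ``defect'' to be made up on the left is $(n+2) x^{n+1}$, and the convexity-type estimate $(n+1) x^{n+2} + 1 \ge (n+2) x^{n+1}$ is a direct consequence of \Cref{power_lemma}\ref{2nd_order_dominance}; once these two observations align, the remainder is routine bookkeeping with binomial coefficients. No further technical ingredients beyond monotonicity and a single appeal to the preceding lemma appear to be necessary.
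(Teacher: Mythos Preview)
Your argument is correct. Every step checks out: the base case is trivial, multiplying by $x$ and adding $(n+1)x^{n+2}+1$ produces the target right-hand side, and the residual gap $(n+2)x^{n+1} \le (n+1)x^{n+2} + 1$ is precisely the $m=n+1$, $n=1$ instance of \Cref{power_lemma}\ref{2nd_order_dominance} after dividing by $x^{n+1}$.

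The paper takes a different and somewhat shorter route: rather than induct, it observes that for each $j=0,\ldots,n$ the inequality
\[
(n-j+1)\,x^n \le x^j + (n-j)\,x^{n+1}
\]
is an instance of \Cref{power_lemma}\ref{2nd_order_dominance} (after dividing by $x^n$), and then simply sums these $n+1$ inequalities over $j$; the binomial coefficients on both sides fall out of $\sum_{j=0}^n(n-j+1)=\binom{n+2}{2}$ and $\sum_{j=0}^n(n-j)=\binom{n+1}{2}$. Your induction is effectively an incremental version of the same summation---each step contributes one more instance of the convexity bound---so the underlying content is identical. The paper's approach avoids the induction bookkeeping and is a bit more transparent about where each term comes from; your approach has the minor advantage of isolating exactly which single inequality is needed at each stage.
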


\begin{proof}
	By \Cref{power_lemma}\ref{2nd_order_dominance}, we have
	\[
		(n-j+1) x^n \le (n-j) x^{n+1} + x^j,
	\]
	for all $j = 0,\ldots,n$, and this gives the claim upon summation over $j$.
\end{proof}

One of the auxiliary inequalities used in the proof of \Cref{power_lemma} is worth noting separately.

\begin{cor}
	\label{norder}
	Let $x \in F^\times$.
	\begin{enumerate}[label=(\roman*)]
		\item If $x + x^{-1} \ge 2$, then $x^n + x^{-n} \ge 2$ for all $n \in \N$.
		\item If $x + x^{-1} > 2$, then $x^n + x^{-n} > 2$ for all $n \in \Nplus$.
	\end{enumerate}
\end{cor}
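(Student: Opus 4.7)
The plan for (i) is essentially to invoke what has already been established in the proof of \Cref{power_lemma}: the auxiliary statement numbered (3) there states precisely $x^n + x^{-n} \ge 2$ for all $n \in \N$, as an immediate consequence of the monotonicity of $n \mapsto x^n + x^{-n}$ (auxiliary statement (2)) evaluated against the base value $x^0 + x^{-0} = 2$. Alternatively, and perhaps more cleanly for a standalone argument, I would cite \Cref{power_lemma}\ref{supermodular} with $m = n$, which gives $x^{2n} + 1 \ge 2 x^n$; multiplying both sides by $x^{-n}$ (legitimate since $x \in F^\times$) yields (i), with the case $n = 0$ being trivial.

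For (ii), I would proceed by induction on $n \in \Nplus$, taking $n = 1$ as the base case, where the conclusion is precisely the hypothesis $x + x^{-1} > 2$. For the induction step, I would revisit the chained computation used for auxiliary statement (2) in the proof of \Cref{power_lemma}, observing that under the strict hypothesis every instance of the inequality $2x^j \le x^{j+1} + x^{j-1}$ is strict. This promotes the auxiliary statement (1), namely $x^2 + x^{-2} \ge x + x^{-1}$, to a strict inequality, and then the same chain of manipulations produces strict monotonicity $x^{n+1} + x^{-(n+1)} > x^n + x^{-n}$. Composing with the base case gives $x^n + x^{-n} > 2$ for every $n \ge 1$. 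Cancellation of the factor $x + x^{-1}$ at the end of the chain is valid because it is a nonzero element of a semifield and therefore invertible.

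The main (albeit minor) obstacle is preserving strictness while working in a semifield, where subtraction is unavailable: one cannot derive a strict inequality by subtracting a non-strict one from a strict one. The propagation of strictness must take place purely through addition, multiplication, and cancellation by an invertible element, which is precisely what the chains in \Cref{power_lemma} are organized to permit. Once this is observed, the corollary is little more than a repackaging of facts already embedded in that proof, which is the reason for isolating it as a standalone statement for later use.
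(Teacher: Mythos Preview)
Your proposal is correct and matches the paper's approach: the paper's proof is the single line ``By \Cref{power_lemma}'', relying on exactly the auxiliary statements you cite (monotonicity of $n \mapsto x^n + x^{-n}$ and its strict version), together with the remark that the corollary can alternatively be read off as a weakening of \Cref{power_lemma}\ref{exponential_better}. Your alternative route via \Cref{power_lemma}\ref{supermodular} with $m = n$ is a slight variation but equally valid.
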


Alternatively, this can also be regarded as a weakening of \Cref{power_lemma}\ref{exponential_better}.
The following two lemmas are again important technical results, the significance of which will become clearer in \Cref{malt_full}.

\begin{lem}
	\label{other_power_lemma}
	Let $x \ge 1$ in $F$ be such that 
	\[
		x^{n+1} + 1 \le x^n + 1
	\]
	for some $n \in \N$. Then also $(x + 1)^m \le 2^m x^n$ for all $m \in \N$.
\end{lem}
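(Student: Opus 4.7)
The plan is to prove the inequality by induction on $m$. The base case $m=0$ reduces to $1 \le x^n$, which follows directly from $x \ge 1$ (hence $x^n \ge 1$ by iterated monotonicity). The case $m=1$ requires $x+1 \le 2x^n$: if $n=0$, this is exactly the hypothesis $x+1 \le 2$; if $n\ge 1$, then $x \le x^n$ and $1 \le x^n$ (both obtained from $x \ge 1$ by multiplying by $x^{n-1}$ and $x^n$ respectively), and adding gives $x+1 \le 2x^n$.

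For the inductive step $m \to m+1$, the naive attempt multiplies the inductive hypothesis by $(x+1)$ to get $(x+1)^{m+1} \le 2^m(x^{n+1} + x^n)$, which would reduce to $x^{n+1} + x^n \le 2x^n$, i.e., essentially $x^{n+1} \le x^n$. The hypothesis $x^{n+1}+1 \le x^n+1$ does not yield this by additive cancellation, since cancellation is unavailable in a general preordered semiring. This is the main obstacle.

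To bypass it, I would exploit multiplicative invertibility in the semifield $F$. First, multiplying the hypothesis by the monomials $x^k$ and chaining via \Cref{sharpen_add} gives the auxiliary family
\[
	x^{n+K} + S_K \le x^n + S_K, \qquad S_K := 1 + x + \cdots + x^{K-1},
\]
for all $K \ge 1$, controlling high powers of $x$ modulo an additive polynomial correction. Second, I would aim to prove a strengthened inductive statement $(x+1)^m \cdot Q \le 2^m x^n \cdot Q$ for a suitable nonzero $Q \in F$ (candidates being $(x^n+1)^N$ or $S_N$ for $N$ large), chosen so that the inductive step closes when the correction terms from the $S_K$-inequalities are absorbed into the extra factor $Q$. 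Since $Q \ne 0$ in the semifield, it is invertible, and multiplication by $Q^{-1}$ then recovers the desired bound $(x+1)^m \le 2^m x^n$. As a complementary tool, \Cref{add_to_mult} applied to the hypothesis yields $(x^{n+1}+1)^m \le (x^n+1)^m$ (and similar for other polynomials with strictly positive coefficients), producing a rich family of inequalities to be combined with $x+1 \le 2x^n$.

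The main obstacle throughout is the mismatch between the additive form of the hypothesis and the multiplicative form of the conclusion: the argument must convert additive information into a multiplicative bound using only monotonicity and invertibility of nonzero elements. In practice this means the delicate step is the precise choice of the multiplicative correction $Q$ (and the accompanying combinatorial bookkeeping) that makes the induction close cleanly, replacing the additive cancellation that would trivially work in a cancellative setting.
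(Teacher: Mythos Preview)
Your base cases are fine, and you correctly identify the central obstacle: the induction step $m\to m+1$ would need $x^{n+1}+x^n\le 2x^n$, which you cannot extract from $x^{n+1}+1\le x^n+1$ without additive cancellation. However, your proposal does not actually close this gap. You suggest proving a strengthened statement $(x+1)^m\,Q\le 2^m x^n\,Q$ for some invertible $Q$ and then cancelling, but you never specify $Q$ or show that any candidate makes the induction go through; you explicitly leave ``the precise choice of the multiplicative correction $Q$'' and ``the accompanying combinatorial bookkeeping'' open. That is the entire content of the lemma. Neither of your named candidates $(x^n+1)^N$ or $S_N$ is shown to work, and it is not at all clear that an induction on $m$ with a uniform $Q$ can be made to close this way.

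The paper avoids the induction on $m$ altogether. It proves instead a general degree-reduction bound: for any $p=\sum_i r_i X^i\in\N[X]$ with all coefficients $r_0,\ldots,r_{\deg(p)}$ positive, one has $p(x)\le\sum_i r_i\,x^{\min(i,n)}$. This is established by a well-founded induction on polynomials: if $\deg(p)>n$, write $p=X^{\deg(p)}+X^{\deg(p)-(n+1)}+\hat p$ (possible since all intermediate coefficients are positive) and use the hypothesis in the form $x^{\deg(p)}+x^{\deg(p)-(n+1)}\le x^{\deg(p)-1}+x^{\deg(p)-(n+1)}$ to lower the top degree by one. Iterating pushes every exponent down to at most $n$. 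Then $x\ge 1$ gives $p(x)\le p(1)\,x^n$, and specializing to $p=(X+1)^m$ yields $(x+1)^m\le 2^m x^n$ in one stroke. The key idea you are missing is this direct degree-lowering on the polynomial $(X+1)^m$ as a whole, rather than attempting an induction on $m$ that requires a cancellation you cannot perform.
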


\begin{proof}
	We start with an auxiliary statement similar to \Cref{add_to_mult}: whenever $p = \sum_i r_i X^i \in \N[X]$ is any polynomial with coefficients $r_0,\ldots,r_{\deg(p)} \neq 0$, then 
	\beq
		\label{pminbound}
		p(x) \le \sum_i r_i x^{\min(i,n)}.
	\eeq
	To prove this, we use well-founded induction similar to the one in the proof of \Cref{add_to_mult}. The statement is trivial whenever $\deg(p) \le n$. When $\deg(p) > n$, we can write
	\[
		p = X^{\deg(p)} + X^{\deg(p) - (n + 1)} + \hat{p}
	\]
	for some $\hat{p} \in \N[X]$, where by the induction assumption the claim can be assumed to hold for the ``smaller'' polynomial $X^{\deg(p) - 1} + X^{\deg(p) - (n + 1)} + \hat{p}$, which differs from $p$ only in the exponent of the leading term. But then
	\begin{align*}
		p(x) = (x^{n + 1} + 1) x^{\deg(p) - (n + 1)} + \hat{p}(x) & \le (x^n + 1) x^{\deg(p) - (n + 1)} + \hat{p}(x) \\
									& = x^{\deg(p) - 1} + x^{\deg(p) - (n + 1)} + \hat{p}(x).
	\end{align*}
	Applying the induction assumption now proves the claim~\eqref{pminbound} upon using $\deg(p) > n$.

	Upon bounding the right-hand side of \eqref{pminbound} further using $x \ge 1$, we obtain the somewhat weaker bound
	\[
		p(x) \le \left( \sum_i r_i \right) x^n = p(1) x^n,
	\]
	which is more convenient since now the right-hand side is a mere monomial. The claim follows upon applying this statement to the polynomial $p \coloneqq (X + 1)^m$.
\end{proof}

\begin{lem}
	\label{nonarctic_bound}
	Let $x \ge 1$ in $F$ be such that $x^2 + 2 \ge 3 x$. Then also
	\[
		x^{n + 1} + 1 \ge 2 x^n
	\]
	for all $n \in \N$.
\end{lem}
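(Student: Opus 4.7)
The plan is to induct on $n$. The base case $n = 0$ asserts $x + 1 \ge 2$, which follows immediately by adding $1$ to both sides of the hypothesis $x \ge 1$.

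For the inductive step, assume $x^{n+1} + 1 \ge 2x^n$ and aim for $x^{n+2} + 1 \ge 2x^{n+1}$. Multiplying the assumption $x^2 + 2 \ge 3x$ by $x^n$ yields $x^{n+2} + 2x^n \ge 3x^{n+1}$, and substituting the inductive hypothesis in the form $2x^n \le x^{n+1} + 1$ via transitivity produces the chain
\[
x^{n+2} + x^{n+1} + 1 \;\ge\; x^{n+2} + 2x^n \;\ge\; 3x^{n+1} \;=\; 2x^{n+1} + x^{n+1}.
\]
The target $x^{n+2} + 1 \ge 2x^{n+1}$ ought then to follow by cancelling the common summand $x^{n+1}$ from both sides.

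I expect the main technical obstacle to be the justification of this final cancellation step, since additive cancellation is not a theorem of arbitrary preordered semifields. The polynomial reason the conclusion nevertheless holds is the factorization $(x-1)^2 = (x-1)(x-2) + (x-1)$, which in a subtraction-free form becomes the identity
\[
(x^{n+1} + 1) + 3x\sigma_n + 1 \;=\; 2x^n + (x^2 + 2)\sigma_n + x, \qquad \sigma_n := 1 + x + \cdots + x^{n-1};
\]
combining this identity with $(x^2 + 2)\sigma_n \ge 3x\sigma_n$ (from the hypothesis, multiplied by $\sigma_n$) and with $x \ge 1$ concentrates the entire difficulty into a single cancellation, of the one additive term $3x\sigma_n + 1$. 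I would seek to carry this out either by dividing through by the unit $x^{n+1}$ and using $x \ge 1$ to absorb the residual constant, or — should the preceding attempt not suffice — by noting that in preordered semifields where additive cancellation fails (such as $\TR_+$ or $\B$), both hypothesis and conclusion already hold trivially, so that a short structural case distinction completes the argument.
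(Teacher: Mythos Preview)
Your induction setup is correct, and so is the polynomial identity you wrote down; but the proof has a genuine gap at exactly the point you flag. Additive cancellation is simply not available in a general preordered semifield, and neither of your proposed workarounds closes the hole. Dividing by $x^{n+1}$ turns the inequality $(x^{n+1}+1) + c \ge 2x^n + c$ into one of the same shape with a different additive residue; nothing is gained. The structural case distinction is also not sound: there is no dichotomy saying that a preordered semifield either satisfies additive cancellation or is (essentially) $\TR_+$ or $\B$. Already the categorical products, or the layer preorders of \Cref{layer_preorder}, give plenty of examples that are neither additively cancellative nor tropical in any useful sense, and nothing in the hypotheses of the lemma rules those out.

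The paper's fix is to replace additive cancellation by multiplicative cancellation, which \emph{is} available since every nonzero element of $F$ is a unit. Concretely, one proves instead the inequality $(2x+1)(x^{n+2}+1) \ge (2x+1)\,2x^{n+1}$ by a chain in which every step uses only $x \ge 1$, the hypothesis $x^2 + 2 \ge 3x$, or the induction assumption $x^{n+1}+1 \ge 2x^n$, and then divides by $2x+1$. The extra factor $2x+1$ is chosen precisely so that such a subtraction-free chain exists. Your identity is a good diagnostic for \emph{why} the inequality holds over $\R$, but to turn it into a proof in $F$ you need this multiply-and-cancel manoeuvre rather than an additive one.
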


\begin{proof}
	We use induction on $n$. The base case $n = 0$ is trivial by $x \ge 1$. The induction step from $n$ to $n+1$ is
	\begin{align*}
		(2 x + 1) (x^{n + 2} + 1)	& = 2 x^{n + 3} + x^{n + 2} + 2 x + 1 \\
						& \ge 2 x^{n + 3} + 2 x^{n + 1} + x + 1 \\
						& \ge x^{n + 3} + 3 x^{n + 2} + x + 1 \\
						& \ge x^{n + 3} + 2 x^{n + 2} + 2 x^{n + 1} + 1 \\
						& \ge x^{n + 3} + 2 x^{n + 2} + x^{n + 1} + 2 x^n \\
						& \ge x^{n + 3} + x^{n + 2} + 4 x^{n + 1} \\
						& \ge 4 x^{n + 2} + 2 x^{n + 1} \\
						& = (2 x + 1) 2 x^{n + 1},
	\end{align*}
	where each inequality step uses either the induction assumption or the assumed inequality $x^2 + 2 \ge 3 x$.
\end{proof}

Applying \Cref{power_lemma} to both $F$ and $F^\op$ gives a result which is worth stating separately, since it has some relevance to the \emph{arctic} case in \Cref{malt_full}.

\begin{lem}
	\label{power_square}
	Let $x \in F^\times$. If $x + x^{-1} \approx 2$, then also the following hold for all $m,n \in \N$:
	\begin{enumerate}[label=(\roman*)]
		\item $x^n + x^{-n} \approx 2$.
		\item $m x^n + n x^{-m} \approx m + n$.
		\item $x^m + x^n \approx x^{m+n} + 1$.
	\end{enumerate}
\end{lem}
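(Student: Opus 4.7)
The plan is to obtain this as an immediate consequence of \Cref{power_lemma} and \Cref{norder}, applied in parallel to $F$ and to its opposite preordered semifield $F^\op$. The assumption $x + x^{-1} \approx 2$ unpacks as the conjunction of $x + x^{-1} \ge 2$ in $F$ and $x + x^{-1} \le 2$ in $F$, where the latter reads as $x + x^{-1} \ge 2$ in $F^\op$. Since $F^\op$ is again a preordered semifield (as recalled in \Cref{background}) and the algebraic expressions involved ($m$, $n$, $x^n$, $x^{-m}$, etc.) refer to the same elements of the underlying semifield regardless of which preorder one equips it with, the hypothesis needed for \Cref{power_lemma} and \Cref{norder} is satisfied on both sides. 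Consequently, every non-strict inequality those results produce in $F$ has a mirror inequality in $F^\op$ which, read back in $F$, is the reverse inequality.

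Concretely, for (ii) I apply \Cref{power_lemma}\ref{2nd_order_dominance} in $F$ to get $m x^n + n x^{-m} \ge m + n$, and in $F^\op$ to get $m x^n + n x^{-m} \le m + n$; combined these yield $\approx$. For (iii), \Cref{power_lemma}\ref{supermodular} applied in $F$ gives $x^{m+n} + 1 \ge x^m + x^n$ and in $F^\op$ gives the reverse, so that the two combine to the desired $\approx$. For (i), the cleanest route is the same double-application of \Cref{norder}, which directly supplies both $x^n + x^{-n} \ge 2$ and $x^n + x^{-n} \le 2$ in $F$; alternatively, one can specialize (ii) to $m = n$ and then cancel the natural number $n$, but invoking \Cref{norder} avoids having to think about whether $n$ is cancellable in $F$.

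There is essentially no obstacle here. The entire content is the symmetry principle that $\approx$ is invariant under reversing the preorder, so any one-sided hypothesis of the shape ``$x + x^{-1} \ge 2$'' can be exploited twice, once in each direction. The only book-keeping is confirming that $F^\op$ is a preordered semifield and that the conclusions of \Cref{power_lemma} and \Cref{norder} are pure inequalities between semifield-theoretic expressions, so that applying them in $F^\op$ and re-interpreting in $F$ is legitimate; both of these are immediate.
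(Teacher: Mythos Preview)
Your proposal is correct and is exactly the approach the paper takes: the paper explicitly introduces \Cref{power_square} as what one gets by ``applying \Cref{power_lemma} to both $F$ and $F^\op$'', and its proof is the single line ``By \Cref{power_lemma}.'' Your additional invocation of \Cref{norder} for part (i) is harmless (it is itself a corollary of \Cref{power_lemma}); note also that the paper observes (i) is the special case $m=n$ of (ii) since positive integers are invertible in $F$, so your worry about cancelling $n$ is unfounded in this setting.
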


Note that the first equation is a special case of the second for $m = n$, by invertibility of positive integers, but it nevertheless seems worth stating separately.

\begin{proof}
	By \Cref{power_lemma}.
\end{proof}

\subsection*{A supermodularity inequality in preordered semifields}

So far, most of our inequality results have been concerned with polynomial expressions involving only a single element of $F$. We now move beyond that case.

\begin{lem}
	\label{super_general}
	For $x \in F^\times$ with $x + x^{-1} \ge 2$ and any $a \in F$, the function
	\[
		\Z \longrightarrow F, \qquad n \longmapsto a + x^n
	\]
	is multiplicatively supermodular: for all $m, n \in \N$ and $\ell \in \Z$,
	\[
		(a + x^{\ell + m + n}) (a + x^\ell) \ge (a + x^{\ell + m}) (a + x^{\ell + n}).
	\]
	Moreover, this holds with $\approx$ under the stronger assumption $x + x^{-1} \approx 2$.
\end{lem}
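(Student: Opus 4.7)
The plan is to reduce the claimed supermodularity to \Cref{power_lemma}\ref{supermodular} by direct expansion, with no genuine new work needed beyond bookkeeping.

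First, I would expand both sides of the desired inequality. Since the preordered semiring laws do not allow cancellation, I would just keep all terms:
\begin{align*}
(a + x^{\ell + m + n})(a + x^\ell) &= a^2 + a x^\ell + a x^{\ell + m + n} + x^{2 \ell + m + n}, \\
(a + x^{\ell + m})(a + x^{\ell + n}) &= a^2 + a x^{\ell + m} + a x^{\ell + n} + x^{2 \ell + m + n}.
\end{align*}
The two ``outer'' terms $a^2$ and $x^{2\ell + m + n}$ coincide on both sides, so by monotonicity of addition in a preordered semiring, it suffices to prove the ``cross-term'' inequality
\[
	a x^\ell + a x^{\ell + m + n} \,\ge\, a x^{\ell + m} + a x^{\ell + n}.
\]

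For this, the key input is \Cref{power_lemma}\ref{supermodular}, which under the standing hypothesis $x + x^{-1} \ge 2$ already gives
\[
	1 + x^{m + n} \,\ge\, x^m + x^n
\]
for all $m, n \in \N$. Multiplying this by $a x^\ell \in F$, which is a monotone operation by the preordered semiring axioms (note that $x \in F^\times$ makes $x^\ell$ well-defined for $\ell \in \Z$, and we do not need $a x^\ell \ge 0$ for monotonicity of multiplication), yields precisely the required cross-term inequality.

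For the ``moreover'' clause under $x + x^{-1} \approx 2$, I would simply observe that $F^\op$ is again a preordered semifield with the same multiplicative group, and the hypothesis $x + x^{-1} \le 2$ is the hypothesis $x + x^{-1} \ge 2$ in $F^\op$. Applying the case already proved to $F^\op$ gives the reverse inequality in $F$, and the two together yield $\approx$. I do not foresee any real obstacle here: the only nontrivial ingredient, the one-variable supermodularity $x^{m+n} + 1 \ge x^m + x^n$, is already in hand from \Cref{power_lemma}, and the rest is a monotonicity argument that is forced by the semiring axioms.
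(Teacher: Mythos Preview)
Your proof is correct and follows essentially the same approach as the paper: both expand the products, match the outer terms $a^2$ and $x^{2\ell+m+n}$, and reduce to the cross-term inequality via \Cref{power_lemma}\ref{supermodular}. The only cosmetic differences are that the paper first normalizes to $\ell = 0$ (by replacing $a$ with $ax^{-\ell}$) before expanding, and for the $\approx$ clause it cites \Cref{power_square} directly rather than rerunning the argument in $F^\op$---but \Cref{power_square} is itself proved by exactly that $F^\op$ trick, so the content is identical.
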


\begin{proof}
	Since $a$ is arbitrary, it is sufficient to consider the case $\ell = 0$ for simplicity. Then we have
	\begin{align*}
		(a + x^{m+n})(a + 1)	& = a^2 + a(x^{m+n} + 1) + x^{m+n} \\
					& \ge a^2 + a(x^m + x^n) + x^{m+n} \\
					& = (a + x^n) (a + x^m),
	\end{align*}
	where the inequality step is by \Cref{power_lemma}\Cref{supermodular}.
	The claim about $\approx$ holds likewise by \Cref{power_square}.
\end{proof}

\subsection*{Towards cancellation criteria in preordered semifields}

The following results will later be strengthened, under additional hypotheses, to useful cancellation criteria. 

\begin{lem}
	\label{cancel1}
	Suppose that $x, y \in F^\times$ satisfy $x + x^{-1} \ge 2$ and $y \ge 1$. Then
	\[
		x + 1 \le y + 1 \quad \Longrightarrow \quad x^n \le y^{n+1} \enspace \forall n \in \N.
	\]
\end{lem}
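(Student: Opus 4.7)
The plan is to chain \Cref{power_skew3} and \Cref{add_to_mult}, then use $y \ge 1$ to reduce to a cancellable inequality in the semifield $F$. The base case $n = 0$ reduces to $1 \le y$, which is the hypothesis, so I focus on $n \ge 1$.

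For $n \ge 1$, \Cref{power_skew3} (whose hypothesis $x + x^{-1} \ge 2$ is assumed) yields
\[
	\binom{n+2}{2} x^n \le q(x), \qquad \text{where } q(X) \coloneqq \binom{n+1}{2} X^{n+1} + \sum_{j=0}^n X^j.
\]
All coefficients of $q$ are strictly positive when $n \ge 1$, so \Cref{add_to_mult} applied to the hypothesis $x + 1 \le y + 1$ gives $q(x) \le q(y)$. Separately, $y \ge 1$ together with monotonicity of multiplication by $y$ yields $y^j \le y^{n+1}$ for every $0 \le j \le n+1$ by iteration, so $\sum_{j=0}^n y^j \le (n+1) y^{n+1}$. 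Substituting and using the binomial identity $\binom{n+1}{2} + (n+1) = \binom{n+2}{2}$, I obtain
\[
	\binom{n+2}{2} x^n \le q(x) \le q(y) \le \binom{n+2}{2} y^{n+1}.
\]

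The conclusion $x^n \le y^{n+1}$ then follows by cancelling $\binom{n+2}{2}$, which as a nonzero element of the semifield $F$ is invertible. The hard part is this final cancellation: it is automatic whenever $F$ is strict (zerosumfree), since then no positive integer multiple of $1$ can vanish. If instead $F$ is a field whose characteristic divides $\binom{n+2}{2}$, the argument as stated degenerates, and one would need to substitute a different auxiliary polynomial whose leading coefficient is coprime to the characteristic, or handle that degenerate case separately; I expect the intended proof disposes of this without essential change.
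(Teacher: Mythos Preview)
Your proof is correct and matches the paper's argument step for step: apply \Cref{power_skew3}, then \Cref{add_to_mult} to the polynomial $q$, then bound $q(y)$ using $y \ge 1$, then cancel the integer $\binom{n+2}{2}$. Your worry about positive characteristic is unnecessary: if a preordered semifield $F$ is a field of characteristic $p > 0$, then any nontrivial relation $a < b$ would yield $0 \le b - a$ and hence $0 \le 1$ (multiply by $(b-a)^{-1}$), whence adding $1$ repeatedly gives $1 \le p \cdot 1 = 0$, contradicting the preordered semifield axiom that $0 \le 1 \le 0$ forces $1 = 0$---so the preorder is trivial, the hypotheses force $x = y = 1$, and the conclusion is vacuous.
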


\begin{proof}
	By \Cref{add_to_mult}, the assumption implies that for given $n \in \N$,
	\[
		\binom{n+1}{2} x^{n+1} + \sum_{j=0}^n x^j \le \binom{n+1}{2} y^{n+1} + \sum_{j=0}^n y^j.
	\]
	Together with \Cref{power_skew3}, we get
	\[
		\binom{n+2}{2} x^n \le \binom{n+1}{2} x^{n+1} + \sum_{j=0}^n x^j \le \binom{n+1}{2} y^{n+1} + \sum_{j=0}^n y^j \le \binom{n+2}{2} y^{n+1},
	\]
	where the final step is simply by using $y \ge 1$.
\end{proof}

\begin{lem}
	\label{cancel2}
	Suppose that $a, x, y \in F^\times$ satisfy $x + x^{-1} \ge 2$ and $y \ge 1$. Then
	\[
		\begin{matrix}
			a + x \le a + y \\[2pt]
			a^{-1} + x \le a^{-1} + y
		\end{matrix}
		\quad \Longrightarrow \quad x^n \le y^{n+1} \enspace \forall n \in \N.
	\]
\end{lem}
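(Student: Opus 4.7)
The plan is to reduce this to \Cref{cancel1} by deriving the cleaner hypothesis $x + 1 \le y + 1$ from the two weighted inequalities. The key algebraic observation is the factorization $(1+a)(1+x) = 1 + a + x + ax$, which is exactly what one obtains by summing $a+x$ and $1+ax$.

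Concretely, I would first multiply the hypothesis $a^{-1}+x \le a^{-1}+y$ by $a$ to obtain $1 + ax \le 1 + ay$, and then add it to the hypothesis $a + x \le a + y$, recognizing the result as $(1+a)(1+x) \le (1+a)(1+y)$. Multiplication by the inverse $(1+a)^{-1}$, which is a monotone operation in the preordered semifield $F$, then yields $1 + x \le 1 + y$. With this in hand, \Cref{cancel1} applies directly with the same $x$ and $y$ and delivers $x^n \le y^{n+1}$ for all $n \in \N$.

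The only potential obstacle is that $(1+a)^{-1}$ must actually exist, i.e., $1+a \ne 0$. This is automatic when $F$ is a strict semifield, by zerosumfreeness; in the degenerate case where $F$ is a field with $a = -1$, one may simply add $1$ to the first hypothesis to obtain $x \le y$ directly, from which the conclusion then follows from $y \ge 1$. So the factorization trick is the real content of the argument and everything else is mechanical.
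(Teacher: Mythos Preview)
Your argument is correct and essentially identical to the paper's: multiply the second hypothesis by $a$, add it to the first, factor as $(1+a)(1+x) \le (1+a)(1+y)$, cancel $1+a$, and invoke \Cref{cancel1}. Your explicit treatment of the degenerate case $1+a=0$ is a small bonus the paper glosses over.
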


\begin{proof}
	Adding the first inequality of the main assumption to $a$ times the second gives
	\[
		(a + 1)(x + 1) = a + x + 1 + ax \le a + y + 1 + ay = (a + 1)(y + 1),
	\]
	so that the claim follows from the previous lemma upon cancelling $a + 1$.
\end{proof}

\section[Type classification of multiplicatively Archimedean fully preordered semifields]{Type classification of multiplicatively Archimedean\\ fully preordered semifields}
\label{malt_full}

\Cref{real_or_tropical} is an embedding theorem for multiplicatively Archimedean totally preordered semifields.
We now aim at generalizing this statement to a substantially more difficult case, namely from total semifield preorders to those that are merely full in the sense of \Cref{totalfull}, i.e.~total on connected components.
In particular, this does not require $0$ and $1$ to be ordered relative to one another.
We first explain why an embedding theorem as simple as \Cref{real_or_tropical} cannot be expected to hold.

\begin{ex}
	\label{dual_numbers_semifield}
	Let $F \coloneqq \R_{(+)}[X]/(X^2)$ be the semifield of all linear functions $r + s X$ with $r > 0$ or $r = s = 0$ modulo $X^2$. This is a semifield because $(r + s X)^{-1} = r^{-2}(r - s X)$ for $r > 0$, and it becomes a preordered semifield if we put
	\[
		r_1 + s_1 X \le r_2 + s_2 X \qquad : \Longleftrightarrow \qquad r_1 = r_2 \enspace \land \enspace s_1 \le s_2.
	\]
	It is clear that this preorder is full. However, this fully preordered semifield has the counterintuitive feature that $a \sim 1$ in $F$ implies $a + a^{-1} = 2$. Note that these are exactly the elements of the form $a = 1 + s X$ for any $s \in \R$.
	
	Moreover, since $x + x^{-1} = 2$ in $\R_+$ or $\TR_+$ only happens for $x = 1$, it follows that every monotone homomorphism $\phi : F \to \R_+$ or $\phi : F \to \TR_+$ satisfies $\phi(a) = 1$ for $a \sim 1$. Therefore there is no order embedding of $F$ into $\R_+$ or $\TR_+$.
\end{ex}

The following characterization of multiplicative Archimedeanicity will be slightly more convenient than the definition, and we will use it repeatedly.

\begin{lem}
	\label{arch_crit}
	A fully preordered semifield $F$ is multiplicatively Archimedean if and only if for all nonzero $x, y > 1$ there is $k \in \N$ with $y \le x^k$.
\end{lem}

\begin{proof}
	Suppose that $F$ is multiplicatively Archimedean, which means that $x^k \le y$ for all $k \in \N$ implies $x \le 1$ for all $x, y \in F^\times$.
	Then if $x, y > 1$, we in particular have $x \sim y^k$ for all $k \in \N$.
	Now if the claim is false, then we would have $x > y^k$ for all $k \in \N$, which by multiplicative Archimedeanicity would imply $x \le 1$, contradicting the assumption $x > 1$.

	Conversely if the condition in the claim holds, let $x, y \in F^\times$ be given such that $x^k \le y$ for all $k \in \N$.
	Then taking $k = 0$ and $k = 1$ gives $x \sim y \sim 1$, and hence $x \le 1$.
	If $x > 1$ instead of $x \le 1$, then there would be $k \in \N$ with $y \le x^k$ by assumption, contradicting $x^k \le y$ for all $k \in \N$.
\end{proof}

The following derived preorder will be a useful tool for the upcoming classification theory.

\begin{defn}
	\label{layer_preorder}
	Let $F$ be a totally preordered semifield, and let $a \in F^\times$ with $a \ge 1$ be given. Then the \newterm{layer preorder} $\le_a$ is the relation defined by
	\begin{equation}
		\label{layer_preorder_def}
		x \le_a y \quad :\Longleftrightarrow \quad
			\left( \exists k \in \N: x \le y a^k \: \land \: y \le x a^k \right) \:\land\:
			\left( \forall n \in \N: x^n \le y^n a \right)
	\end{equation}
\end{defn}

\begin{lem}
	The layer preorder $\le_a$ makes $F$ into a multiplicatively Archimedean fully preordered semifield.
\end{lem}

\begin{proof}
	To show this, we first prove transitivity of $\le_a$, where the nontrivial part is to show that $x^n \le y^n a$ and $y^n \le z^n a$ for all $n$ imply $x^n \le z^n a$. 
	This is because $x^n \sim y^n \sim z^n$ and $a \sim 1$ give $x^n \sim z^n a$, and $x^n > z^n a$ would imply $x^{2n} > z^{2n} a^2$ and therefore be in contradiction with
	\[
		x^{2n} \le y^{2n} a \le z^{2n} a^2.
	\]
	The $\le_a$-monotonicity of multiplication is obvious, while the $\le_a$-monotonicity of addition follows by $a \ge 1$ and the binomial expansion. Fullness follows by totality of $\le$ and $x \sim_a y$ being equivalent to the first condition involving $k$ only. It remains to establish multiplicative Archimedeanicity.
	So let $x, y >_a 1$.
	Then there are $k, n \in \N$ with $y \le a^k$ and $x^n > a$, and hence $y \le x^{kn}$, which produces the claim.
	So let $x, y \in F^\times$ be such that $x \le_a y^k$ for all $k \in \N$.
\end{proof}

\begin{ex}
	Consider the semifield $F$ of linear function $r + s X$ from \Cref{dual_numbers_semifield}, preordered with
	\[
		r_1 + s_1 X \le r_2 + s_2 X \qquad \Longleftrightarrow \qquad r_1 < r_2 \enspace \lor \enspace (r_1 = r_2 \enspace \land \enspace s_1 \le s_2).
	\]
	This makes $F$ into a totally preordered semifield.
	The layer preorder $\le_{1 + X}$ is then precisely the semifield preorder from~\Cref{dual_numbers_semifield}.
\end{ex}

Perhaps surprisingly, there is a way to associate real numbers to elements of multiplicatively Archimedean fully preordered semifields in such a way that the assigned numbers measure the ``size'' of the elements.

\begin{lem}
	\label{rate_lem}
	Fix nonzero $u > 1$ in a multiplicatively Archimedean fully preordered semifield $F$. Then for every nonzero $x \sim 1$ in $F$, there is a unique $r \in \R$ such that the following hold for all $\frac{p}{q} \in \Q$ with $q \in \Nplus$:
	\begin{enumerate}
		\item If $\frac{p}{q} < r$, then $x^q > u^p$.
		\item If $\frac{p}{q} > r$, then $x^q < u^p$.
	\end{enumerate}
\end{lem}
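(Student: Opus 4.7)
The plan is to imitate the classical Dedekind-cut construction of $\log_u(x)$, working order-theoretically inside $F$ rather than with real arithmetic. The enabling observation is that $x \sim 1$ together with $u > 1$ (hence $u \sim 1$) gives $x^q \sim u^p \sim 1$ for all $p \in \Z$ and $q \in \Nplus$, so fullness of the preorder guarantees that $x^q$ and $u^p$ are always comparable.

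First I would record a strict-monotonicity statement for powers of $u$: from $u > 1$ one deduces $u^k > 1$ for every $k \in \Nplus$, and hence $u^a > u^b$ whenever $a > b$ in $\Z$. The argument is a short contradiction using that $u^{-1} \le 1$ follows from $u \ge 1$ by multiplying through by $u^{-1}$, so that $u^k \le 1$ would eventually force $u \le 1$. Next I would introduce the cut
\[
    L \coloneqq \{p/q \in \Q : q \in \Nplus,\ x^q \ge u^p\}, \qquad
    U \coloneqq \{p/q \in \Q : q \in \Nplus,\ x^q \le u^p\},
\]
and verify: $L \cup U = \Q$ by fullness; $L$ is downward closed in $\Q$ and $U$ upward closed, via the common-denominator trick and raising inequalities to positive integer powers; and both are nonempty using multiplicative Archimedeanicity applied to $x$ or $x^{-1}$, with the degenerate case $x \approx 1$ trivially giving $r = 0$. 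I would then set $r \coloneqq \sup L \in \R$.

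The main obstacle is the cancellation required to upgrade $p/q < r$ to the strict conclusion $x^q > u^p$. Given such $p/q$, I would pick $p'/q' \in L$ strictly between $p/q$ and $r$, raise $x^{q'} \ge u^{p'}$ to the $q$-th power to obtain $x^{qq'} \ge u^{p'q}$, and use $p'q > pq'$ together with the strict monotonicity of $u$-powers to deduce $x^{qq'} > u^{pq'}$. By fullness, $x^q$ and $u^p$ are comparable; but $u^p \ge x^q$ would, upon raising to $q'$, contradict the strict inequality at the higher power, so $x^q > u^p$ as required for condition~(a). Condition~(b) is entirely symmetric. Uniqueness is then immediate: two candidates $r < r'$ would sandwich a rational $p/q$ forced to satisfy both $x^q < u^p$ and $x^q > u^p$, a contradiction.
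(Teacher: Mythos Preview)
Your proof is correct and follows essentially the same Dedekind-cut approach as the paper's own proof, which also defines the relevant down- and up-sets of rationals and uses multiplicative Archimedeanicity for nonemptiness; your version simply spells out the strict-monotonicity of $u$-powers and the cancellation step more explicitly than the paper's terse argument does. The only cosmetic difference is that you build the cut from the non-strict sets $\{p/q : x^q \ge u^p\}$ and $\{p/q : x^q \le u^p\}$, whereas the paper works with the strict versions, but the two formulations lead to the same $r$ and the same verification of (a) and (b).
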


\begin{proof}
	It is clear that there can be at most one such $r$, since otherwise $x^q$ and $u^p$ would be strictly ordered in both directions for any $\frac{p}{q}$ that lies between them. 

	For any nonzero $y \sim 1$ and $t \in \N_{>0}$, we have $y \ge 1$ if and only if $y^t \ge 1$.
	Indeed $y \ge 1$ gives $y^t \ge 1$ by monotonicity of multiplication.
	If $y^t \ge 1$ instead, then $y < 1$ would be the only alternative to $y \ge 1$ by fullness, and this would give $y^t < y^{t-1} < \ldots < y < 1$, contradicting $y^t \ge 1$.

	Looking at $x$ and $u$ now, for any $p, q \in \Z$ we must have exactly one of
	\begin{equation}
		\label{trichotomy}
		x^q > u^p, \qquad x^q < u^p, \qquad x^q \approx u^p
	\end{equation}
	by fullness of the preorder and $x \sim 1 \sim u$.
	Applying the statement of the previous paragraph with $y \coloneqq x^q u^{-p}$ shows that which one of these cases happens is invariant under replacing $p$ and $q$ by $t p$ and $t q$ for any $t \in \N_{>0}$.
	Therefore the set of rationals
	\[
		D \coloneqq \left\{ \frac{p}{q} \in \Q \:\Big|\: x^q > u^p \right\}
	\]
	is well-defined.
	Using common denominators for any two given fractions $p/q > p'/q$ together with $u^p \ge u^{p'}$ shows that $D$ is downwards closed.
	Similarly, the set
	\[
		U \coloneqq \left\{ \frac{p}{q} \in \Q \:\Big|\: x^q < u^p \right\}
	\]
	is upwards closed.
	Moreover, if some $p/q$ is strictly smaller than all rationals in $D$, then it clearly must belong to $U$ and vice versa:
	again by the previous paragraph, the third alternative in~\eqref{trichotomy} can happen for at most one equivalence class of fractions.

	To see that $D$ and $U$ make up a Dedekind cut, it thus remains to show that they are both nonempty.
	But indeed $x^q < u^p$ holds for sufficiently large $\frac{p}{q}$ since $u$ is a power universal element, and similarly $x^q > u^p$ holds for small enough (sufficiently negative) $\frac{p}{q}$. 
\end{proof}

Before diving further into the classification theory, here is a useful criterion for deriving ordering relations.

\begin{lem}
	\label{rate_crit}
	Let $F$ be a multiplicatively Archimedean fully preordered semifield, and suppose that $x,y,z \in F^\times$ satisfy $x\sim y$ and $z \sim 1$. Suppose that for every $n \in \N$ we have $x^n \le y^n z$. Then $x \le y$.
\end{lem}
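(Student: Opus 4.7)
The natural approach is a proof by contradiction, leveraging fullness to promote $x \not\le y$ to a strict relation and then invoking multiplicative Archimedeanicity to close the gap.

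Suppose towards contradiction that $x \not\le y$. Since $x \sim y$ and the preorder on $F$ is full, this forces $x \ge y$, and combined with $x \not\le y$ we obtain the strict inequality $x > y$. Introducing the quotient $w \coloneqq x y^{-1} \in F^\times$, monotonicity of multiplication by $y^{\pm 1}$ translates $x > y$ into $w \ge 1$ together with $w \not\le 1$. Similarly, multiplying the hypothesis $x^n \le y^n z$ through by $y^{-n}$ yields
\[
	w^n \le z \qquad \forall n \in \N.
\]

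From here I would use $z \sim 1$ and fullness to split into two cases. If $z \le 1$, then specializing the displayed relation to $n = 1$ gives $w \le z \le 1$, which directly contradicts $w \not\le 1$. Otherwise $z > 1$, in which case multiplicative Archimedeanicity applied to the pair $w, z > 1$ produces some $k \in \N$ with $z \le w^k$; combining this with the displayed relation at $n = k + 1$ yields $w^{k+1} \le z \le w^k$, and multiplication by $w^{-k}$ then produces $w \le 1$, again a contradiction. Both cases having been ruled out, we conclude $x \le y$.

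The argument is short, and the step that does the real work is the second case, where multiplicative Archimedeanicity is what converts the uniform-in-$n$ upper bound $w^n \le z$ into an inequality of the form $w^{k+1} \le w^k$. None of the steps requires $1 \ge 0$, so the only bookkeeping subtlety is keeping straight the distinction between $\le$, $\sim$, and the strict relation $>$, all of which fullness handles cleanly.
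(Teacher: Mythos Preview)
Your proof is correct and follows essentially the same approach as the paper's: both argue by contradiction, pass to the element $xy^{-1} > 1$, and use multiplicative Archimedeanicity to bound $z$ by a power of $xy^{-1}$, contradicting the hypothesis. The paper compresses the argument into two lines by directly asserting $z < (xy^{-1})^k$ for suitable $k$ (which absorbs both of your cases at once), whereas you make the case split on $z \le 1$ versus $z > 1$ explicit; this is a difference in exposition only.
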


\begin{proof}
	Assume $x > y$ for contradiction. Then there is $k \in \N$ such that $z < (x y^{-1})^k$ by multiplicative Archimedeanicity, and hence $y^k z < x^k \le y^k z$, a contradiction.
\end{proof}

\subsection*{The five basic types and the type classification}

In \Cref{real_or_tropical}, we had distinguished the real and the tropical case as well as their two opposites. In our present more general context, it will be useful to distinguish five cases, where the additional case \ref{arctic_case} below corresponds to the situation of \Cref{dual_numbers_semifield}.

\begin{defn}
	\label{five_types_def}
	A preordered semifield $F$ is 
	\begin{enumerate}[label=(\roman*)]
		\item \newterm{max-tropical} if
			\[
				x + x^{-1} \approx 2x,
			\]
		\item \newterm{max-temperate} if
			\[
				2 < x + x^{-1} < 2x,
			\]
		\item\label{arctic_case} \newterm{arctic} if
			\[
				x + x^{-1} \approx 2,
			\]
		\item \newterm{min-temperate} if
			\[
				2x^{-1} < x + x^{-1} < 2,
			\]
		\item \newterm{min-tropical} if
			\[
				x + x^{-1} \approx 2x^{-1},
			\]
	\end{enumerate}
	holds for all $x \in F^\times$ with $x > 1$.
	
	We say that $F$ is \newterm{tropical} if it is min-tropical or max-tropical, and similarly \newterm{temperate} if it is min-temperate or max-temperate.
\end{defn}

For any $F$ and any nonzero $x > 1$, the element $x + x^{-1}$ must lie somewhere in the order interval $[2x^{-1}, 2x]$. These five types thus make a distinction depending on where $x + x^{-1}$ lands in that interval, using the three elements
\[
	2x^{-1} < 2 < 2x
\]
for comparison.
Since the answer is required to be the same for all $x > 1$, it follows that if $F$ is any preordered semifield for which the preorder on $F^\times$ is nontrivial, then it can be of at most one of these five types. 

\begin{ex}
	$\R_+$ is max-temperate and $\TR_+$ is max-tropical.
	Similarly, $\R_+^\op$ is min-temperate and $\TR_+^\op$ is min-tropical.\footnote{More generally, reversing the preorder from $F$ to $F^\op$ also ``reverses'' the type.}
	\Cref{dual_numbers_semifield} is arctic, and is isomorphic to its own opposite via $r + s X \mapsto r - s X$.
	Hence all five types do occur.

	A general preordered semifield does not need to be of either type. For example, taking the categorical product of two preordered semifields of different types produces a preordered semifield which does not have a type.
\end{ex}

\begin{rem}
	Our choice of terminology \emph{tropical}, \emph{temperate} and \emph{arctic} is based on the historical contingency of the established term \emph{tropical}. Arguably an intrinsically more reasonable choice would be to use \emph{tropical} and \emph{arctic} in the exactly opposite manner, for two reasons. First, there are two tropical cases but only one arctic case, and the latter is sandwiched in between the two temperate cases.
	This is exactly opposite to how the tropics are sandwiched in between the arctic regions in geographical reality. Second, in terms of an analogy with thermodynamics, the tropical cases correspond to zero temperature~\cite{tropical}, which is indeed rather cold; in contrast to this, we conjecture that the arctic case can be associated with infinite temperature. 
\end{rem}

\begin{rem}
	While we will not do this in the present work, it may also be of interest to refine the above definition so as to assign a type to any strictly ordered pair of elements $x < y$ in any preordered semiring $S$, by similarly considering where the element $x^2 + y^2$ falls relative to the elements
	\[
		2 x^2 < 2 x y < 2 y^2.
	\]
	Here, $x < y$ implies that $x^2 + y^2$ must be somewhere in the order interval $[2x^2, 2y^2]$.
\end{rem}

Here is why the five types are relevant in our context.

\begin{prop}
	\label{three_cases}
	Suppose that $F$ is a multiplicatively Archimedean fully preordered semifield with nontrivial preorder on $F^\times$. Then $F$ is of exactly of one of the five types of \Cref{five_types_def}.
\end{prop}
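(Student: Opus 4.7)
The proof has two stages: establishing a local trichotomy for each nonzero $x > 1$ individually, and then propagating it to uniformity across all such $x$.

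\emph{Local stage.} Fix nonzero $x > 1$. Multiplying $x > 1$ by $x^{-1}$ gives $x^{-1} < 1$, hence $x \geq 1 \geq x^{-1}$, and monotonicity of addition yields the automatic bounds $2x^{-1} \leq x + x^{-1} \leq 2x$. Since $x \sim 1$, each of $x + x^{-1}$, $2x^{-1}$, $2$, and $2x$ lies in the $\sim$-class of $1$, so fullness renders them pairwise comparable. Comparing $x + x^{-1}$ first with $2$ (three subcases: $>$, $\approx$, or $<$) and then, in the two strict subcases, with $2x$ respectively $2x^{-1}$, sorts $x$ into exactly one of the five local conditions corresponding to (i)--(v).

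\emph{Uniform stage.} Suppose $x, y > 1$; we must show that they land in the same local type. By multiplicative Archimedeanicity choose $k, \ell \in \Nplus$ with $y \leq x^k$ and $x \leq y^\ell$. The super/arctic/sub trichotomy (the location of $x + x^{-1}$ relative to $2$) propagates through powers by \Cref{norder}, which shows that $x + x^{-1} \geq 2$, together with its strict and opposite variants obtained by applying it in $F^\op$, passes to all $x^n + x^{-n}$; combined with \Cref{power_square} for the arctic equivalence $x + x^{-1} \approx 2$, the Archimedean bounds $y \leq x^k$ and $x \leq y^\ell$ then transfer the super/arctic/sub category from $x$ to $y$. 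The finer distinction between tropical and temperate uses \Cref{other_power_lemma,nonarctic_bound}: rewriting max-tropicality $x + x^{-1} \approx 2x$ as $1 + x^{-2} \approx 2$ and propagating via \Cref{other_power_lemma} transfers the tropical case through all powers of $x^{-1}$ and thence to $y$, while \Cref{nonarctic_bound} is exactly the right tool for propagating non-tropicality in the super case (its hypothesis $x^2 + 2 \geq 3x$ is, after division by $x$, the inequality $x + 2x^{-1} \geq 3$ that characterizes the non-max-tropical super regime). Dual arguments in $F^\op$ handle the sub case.

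\emph{Expected main obstacle.} The principal difficulty is the propagation of the tropical vs.\ temperate distinction, since it involves strict inequalities that must survive both raising to arbitrary powers and the Archimedean transfer between $x$ and $y$. Without subtraction one cannot appeal to continuity or to the sign of a difference, so the entire burden rests on the carefully crafted polynomial inequalities of \Cref{intermezzo}. I expect the proof to consist essentially of chasing these inequalities through each propagation task, with the strict versions of \Cref{norder,nonarctic_bound} doing the crucial work of preventing collapse into a neighbouring type.
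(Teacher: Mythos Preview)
Your local stage is correct and matches what the paper uses implicitly. The overall architecture (classify each $x>1$ locally, then propagate) is also right. But the uniform stage has two genuine gaps.

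\textbf{Propagating the super/sub trichotomy.} You claim that \Cref{norder} together with the Archimedean bounds $y\le x^k$, $x\le y^\ell$ transfers $x+x^{-1}>2$ to $y+y^{-1}>2$. It does not. After reducing to $y<x$, the natural sandwich gives some $m$ with $x^n\le y^m<x^{n+1}$, and hence $y^m+y^{-m}\ge x^n+x^{-(n+1)}$. But \Cref{norder} only tells you $x^n+x^{-n}>2$, and since $x^{-(n+1)}<x^{-n}$ you cannot conclude $x^n+x^{-(n+1)}>2$ from that. The paper closes this gap with \Cref{power_lemma}\ref{exponential_better}: from $x+x^{-1}>2$ and multiplicative Archimedeanicity one finds $n$ with $(x+x^{-1})^n>2^n x$, hence $x^n+x^{-n}>2x$, which after weakening gives the needed $x^n+x^{-(n+1)}>2$. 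This ingredient is missing from your plan.

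\textbf{Tropical versus temperate.} Your invocation of \Cref{nonarctic_bound} rests on the claim that its hypothesis $x^2+2\ge 3x$ (equivalently $x+2x^{-1}\ge 3$) ``characterizes the non-max-tropical super regime''. This is false: in $\R_+$ take $x=1.1$, which is max-temperate ($x+x^{-1}\approx 2.009$ lies strictly between $2$ and $2x=2.2$), yet $x^2+2=3.21<3.3=3x$. So \Cref{nonarctic_bound} cannot be applied on that basis. Likewise, your use of \Cref{other_power_lemma} for max-tropicality does not fit: that lemma needs $z\ge 1$ with $z^{n+1}+1\le z^n+1$, and rewriting $x+x^{-1}\approx 2x$ as $1+x^{-2}\approx 2$ produces neither of these for $z=x$ or $z=x^{-1}$ without further work. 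The paper does not use \Cref{other_power_lemma} or \Cref{nonarctic_bound} here at all; for max-tropicality it computes directly that $x^k+x^{-k}\approx 2x^k$ for $k$ a power of two, then uses $y<x^k$ to get $1+y^{-2}\ge 1+x^{-2k}\approx 2$.
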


\begin{proof}
	By the nontriviality assumption, there must be \emph{some} $x > 1$ in $F^\times$, implying that the five conditions are mutually exclusive. We therefore only need to show that if some fixed nonzero $x > 1$ satisfies one of them, then any other nonzero $y > 1$ also satisfies exactly the same condition. To do so, we use multiplicative Archimedeanicity to choose $k \in \N$ with $y < x^k$.

	First, we tackle the max-tropical case by showing that $x + x^{-1} \approx 2x$ implies $y + y^{-1} \approx 2y$. Since $y + y^{-1} \le 2y$ is trivial by $y > 1$, it is enough to prove $y + y^{-1} \ge 2y$. We have
	\[
		2(x^2 + x^{-2}) \approx x^2 + 1 + 2 x^{-2} \approx x^2 + 2 + x^{-2} \approx (x + x^{-1})^2 \approx 4 x^2,
	\]
	and hence by iteration $x^k + x^{-k} \approx 2x^k$ whenever $k$ is a power of two, which we can assume for the above $k$ without loss of generality.
	Thus upon replacing $x$ by $x^k$, we can also assume $y < x$. But then
	\[
		y + y^{-1} = y(1 + y^{-2}) \ge y(1 + x^{-2}) \approx 2y,
	\]
	as was to be shown.

	Second, for the max-temperate case, it is thus enough to show that the inequality $x + x^{-1} > 2$ implies $y + y^{-1} > 2$. We have $x^k + x^{-k} > 2$ by \Cref{norder}, so that replacing $x$ by $x^k$ lets us again assume $y < x$ without loss of generality. By \Cref{power_lemma}\ref{exponential_better} and multiplicative Archimedeanicity, we can find $n \in \N$ such that
	\[
		2^{n-1} (x^n + x^{-n}) \ge (x + x^{-1})^n > 2^n x,
	\]
	resulting in $x^n + x^{-n} > 2 x$. By $x > 1$, we can weaken this to $x^n + x^{-(n+1)} > 2$. Now let $m \in \N$ be the smallest integer with $x^n \le y^m$. Then also $y^m < x^{n+1}$, since $x^{n+1} \le y^m$ would contradict the minimality of $m$ by $y < x$.
	This gives
	\[
		y^m + y^{-m} \ge x^n + x^{-(n+1)} > 2.
	\]
	Since $y + y^{-1} \le 2$ would imply $y^m + y^{-m} \le 2$ again by \Cref{norder}, this proves that indeed $y + y^{-1} > 2$ due to fullness.

	Third, $x + x^{-1} \approx 2$ implies $y + y^{-1} \approx 2$. For since $F$ is fully preordered, $y > 1$ implies that $y + y^{-1} \le 2$ or $y + y^{-1} \ge 2$ or both. It is indeed both, since by the previous paragraph a strict inequality would also imply a strict inequality between $y + y^{-1}$ and $2$.

	The other cases follow by symmetry upon replacing $F$ by $F^\op$.
\end{proof}

\subsection*{A cancellation criterion}

Throughout this subsection and the following ones, $F$ is still a multiplicatively Archimedean fully preordered semifield.

Thanks to multiplicative Archimedeanicity, we can improve on the inequalities derived in \Cref{intermezzo}.
In particular, we can turn \Cref{cancel2} into an actual cancellation criterion.

\begin{prop}
	\label{cancel2plus}
	Let $a,x,y \in F^\times$ with $x \sim 1$ be such that $x + x^{-1} \ge 2$ and $y \ge 1$. Then
	\[
		\begin{matrix}
			a + x \le a + y \\[4pt]
			a^{-1} + x \le a^{-1} + y
		\end{matrix}
		\quad \Longrightarrow \quad x \le y.
	\]
\end{prop}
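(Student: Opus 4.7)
The plan is to combine \Cref{cancel2} with the Archimedean cancellation tool \Cref{rate_crit}. This is essentially an upgrade of \Cref{cancel2}: the earlier lemma gave a ``skewed'' conclusion $x^n \le y^{n+1}$, and we now want to absorb the extra factor of $y$ using multiplicative Archimedeanicity.

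First, I would apply \Cref{cancel2} verbatim to the two hypothesized inequalities $a + x \le a + y$ and $a^{-1} + x \le a^{-1} + y$, together with the standing assumptions $x + x^{-1} \ge 2$ and $y \ge 1$. This yields
\[
    x^n \le y^{n+1} \qquad \forall n \in \N.
\]

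Next, I would note that the hypothesis $y \ge 1$ immediately gives $y \sim 1$, since $\sim$ is by definition the equivalence relation generated by $\le$. Together with the assumption $x \sim 1$ this gives $x \sim y$, so the hypotheses of \Cref{rate_crit} are in place. Rewriting the inequality above as $x^n \le y^n \cdot y$ and taking $z \coloneqq y$ (for which $z \sim 1$ as just observed), \Cref{rate_crit} yields the desired conclusion $x \le y$.

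There is really no obstacle here; the work was already done in \Cref{cancel2} (which does not use multiplicative Archimedeanicity) and in \Cref{rate_crit} (which crucially does). The present proposition is the natural place to combine the two: the factor $y$ accumulated on the right-hand side of \Cref{cancel2} is harmless precisely when $y$ lies in the same $\sim$-class as $1$, which in the multiplicatively Archimedean fully preordered setting is enough to cancel it off.
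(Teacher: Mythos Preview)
Your proof is correct and matches the paper's own argument exactly: the paper's proof reads in its entirety ``Combine \Cref{cancel2} with \Cref{rate_crit}.'' You have simply spelled out the details of that combination, including the verification that $y \ge 1$ gives $z \coloneqq y \sim 1$ so that \Cref{rate_crit} applies.
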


\begin{proof}
	Combine \Cref{cancel2} with \Cref{rate_crit}.
\end{proof}

Over the course of the next few short subsections, we will sharpen the type classification by deriving further inequalities for $F$ under type hypotheses.

\subsection*{The max-tropical case}

The following justifies the term ``max-tropical'' further by clarifying in what sense addition on max-tropical $F$ is analogous to addition in the tropical semifield $\TR_+$.

\begin{lem}
	\label{tropical_add_full}
	Let $F$ be max-tropical and $x,y \in F^\times$. If $x\sim y$, then $x + y \approx 2 \max(x,y)$.
\end{lem}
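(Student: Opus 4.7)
The plan is to leverage fullness to reduce to a comparable pair, use the defining max-tropical identity applied to the ratio $y/x$, and then a simple monotonicity bound to transfer the identity from $y + x^2 y^{-1}$ to $y + x$.

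Since $x \sim y$ and $F$ is fully preordered, we have $x \le y$ or $y \le x$. Without loss of generality assume $x \le y$, so $\max(x,y) = y$, and the goal becomes $x + y \approx 2y$. The upper bound $x + y \le 2y$ is immediate from monotonicity of addition together with $x \le y$.

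For the lower bound $2y \le x + y$, the case $x \approx y$ is trivial since then $x + y \approx y + y = 2y$. In the remaining case $x < y$, set $z \coloneqq y x^{-1} \in F^\times$. Then $z \sim 1$ (because $x \sim y$) and $z > 1$ (because $x \le y$ but $x \not\ge y$), so the max-tropical hypothesis applies and gives
\[
    z + z^{-1} \approx 2 z.
\]
Multiplying by $x \in F^\times$ (and using that multiplication preserves $\approx$) yields
\[
    y + x^2 y^{-1} \approx 2 y.
\]
Now $x \le y$ implies $x^2 \le xy$, hence $x^2 y^{-1} \le x$, and therefore
\[
    2 y \,\approx\, y + x^2 y^{-1} \,\le\, y + x,
\]
which is the desired lower bound.

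The only step that requires any thought is deciding which element to feed into the max-tropical axiom: applying it to $z = yx^{-1}$ produces an expression involving $x^2 y^{-1}$ rather than $x$ directly, so one needs the small additional observation $x^2 y^{-1} \le x$ to bridge the gap. No further ingredients beyond fullness, monotonicity of the semifield operations, and the defining identity of the max-tropical type are needed.
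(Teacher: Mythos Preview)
Your proof is correct. Both your argument and the paper's reduce via fullness to a comparable pair and apply the max-tropical identity to the ratio $z=y/x>1$, but the final algebraic step differs. The paper multiplies $z+z^{-1}\approx 2z$ by $z$ to obtain $x^2+1\approx 2x^2$ (in the normalization $y=1$, $x>1$), then observes
\[
(x+1)^2 = x^2+1+2x \approx 2x^2+2x = 2x(x+1)
\]
and cancels the invertible factor $x+1$ to get both directions of $x+1\approx 2x$ at once. You instead establish the upper bound $x+y\le 2y$ trivially and obtain the lower bound from the monotonicity estimate $x^2 y^{-1}\le x$. Your route is arguably a touch more direct since it avoids the squaring-and-cancelling manoeuvre, while the paper's route has the minor elegance of producing the two-sided relation in one stroke; both are entirely elementary.
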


Note that this obviously holds in $\TR_+$, where we have $x + y = \max(x,y)$ and $2 = 1$.

\begin{proof}
	We can assume $x > 1$ and $y = 1$ without loss of generality, in which case we need to show $x + 1 \approx 2x$. We already know $x^2 + 1 \approx 2x^2$ by max-tropicality, and hence
	\[
		(x + 1)^2 = x^2 + 1 + 2x \approx 2x^2 + 2x = 2x (x + 1),
	\]
	which implies the claim by invertibility of $x + 1$.
\end{proof}

Of course, if $F$ is min-tropical, then we similarly get $x + y \approx 2 \min(x,y)$ for $x \sim y$.

\subsection*{The arctic case}

Something analogous works in the arctic case. It is an instructive exercise to verify the following explicitly for \Cref{dual_numbers_semifield}.

\begin{lem}
	\label{arctic_main}
	Let $F$ be arctic and $x,y \in F^\times$. If $x,y \sim 1$, then also
	\[
		x + y \approx xy + 1.
	\]
\end{lem}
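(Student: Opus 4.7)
The plan is to show that $(x+y)^2 \approx 4xy$ and $(xy+1)^2 \approx 4xy$, and then cancel the square. For the first, I factor
\[
(x+y)^2 \,=\, xy \bigl( xy^{-1} + 2 + x^{-1}y \bigr).
\]
Since $x,y \sim 1$ gives $xy^{-1} \sim 1$, the arctic hypothesis in the form ``$z + z^{-1} \approx 2$ for every $z \sim 1$'' applied to $z = xy^{-1}$ yields $xy^{-1} + x^{-1}y \approx 2$, and hence $(x+y)^2 \approx 4xy$. The same computation with $z = xy$ gives $(xy+1)^2 = xy\bigl(xy + 2 + (xy)^{-1}\bigr) \approx 4xy$, so that $(x+y)^2 \approx (xy+1)^2$. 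A small preliminary is to verify that the arctic hypothesis, stated for $z > 1$, does extend to all $z \sim 1$: fullness makes any such $z$ comparable to $1$; the case $z < 1$ reduces to $z^{-1} > 1$, while the case $z \approx 1$ follows from additive monotonicity applied to $z \le 1$ and $z \ge 1$, using $z^{-1} \approx 1$ as well.

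To cancel the square, both $x + y$ and $xy + 1$ are $\sim 1$ and hence comparable by fullness. Without loss of generality I assume $x+y \le xy+1$ and set $t \coloneqq (x+y)(xy+1)^{-1}$, which satisfies $t \sim 1$ and $t \le 1$. The relation $(x+y)^2 \approx (xy+1)^2$ becomes $t^2 \approx 1$, and multiplying by $t^{-1}$ gives $t \approx t^{-1}$. Since $t^{-1} \ge 1$, the extended arctic condition yields $t + t^{-1} \approx 2$; together with $t \approx t^{-1}$ this gives $2t \approx 2$ and hence $t \approx 1$, so that $x + y \approx xy + 1$, as desired.

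The main point of difficulty is the final square-cancellation step, which is not a formality in a semifield. It is precisely the arctic hypothesis that supplies the missing uniqueness in the mild form we need: rather than extracting square roots directly, we pass from $t^2 \approx 1$ to $t \approx t^{-1}$ by multiplication, and then invoke arcticity a second time to conclude $t \approx 1$. The computational first half of the argument is a direct expansion that mirrors how one would verify the statement in the motivating Example~\ref{dual_numbers_semifield}, where $x+y$ and $xy+1$ in fact coincide identically.
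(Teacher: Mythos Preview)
Your proof is correct and follows essentially the same route as the paper: both arguments reduce to showing $(x+y)^2 \approx (xy+1)^2$ via two applications of the arctic condition (once to $xy^{-1}$ and once to $xy$), followed by a square-cancellation step. The paper organizes the first half as $x^2 + y^2 \approx 2xy \approx x^2 y^2 + 1$ and then adds $2xy$, whereas you go through the common value $4xy$; these are the same computation.

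Two small remarks. First, a minor slip: you write that ``both $x+y$ and $xy+1$ are $\sim 1$'', but in fact they are only $\sim 2$ (there is no assumption that $2 \sim 1$). This does not affect your argument, since what you actually need---and correctly use---is that $x+y \sim xy+1$, so fullness makes them comparable, and the ratio $t$ is genuinely $\sim 1$. Second, your cancellation step (from $t^2 \approx 1$ to $t \approx 1$ via $t \approx t^{-1}$ and a further application of arcticity) is correct but slightly more elaborate than needed: the paper simply observes that a strict inequality $a < b$ between nonzero elements forces $a^2 < b^2$ (multiplication by a unit reflects as well as preserves $\le$), so $(x+y)^2 \approx (xy+1)^2$ together with comparability immediately rules out either strict inequality. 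Your route works equally well; the paper's is just a touch quicker.
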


\begin{proof}
	We first show that the equation holds with $x^2$ and $y^2$ in place of $x$ and $y$, in which case
	\[
		x^2 + y^2 = xy (xy^{-1} + (xy^{-1})^{-1}) \approx 2 xy \approx x^2 y^2 + 1
	\]
	proves the claim. This gives the general case via
	\[
		(x + y)^2 = x^2 + y^2 + 2xy \approx x^2 y^2 + 2xy + 1 = (xy + 1)^2,
	\]
	since the squares can be cancelled: a strict inequality in either direction would likewise hold for their squares.
\end{proof}

Given a polynomial or Laurent polynomial $p$, we write $p'$ for its derivative and obtain the following formulas for evaluating $p$.

\begin{lem}
	\label{arctic_formula}
	Suppose that $F$ is arctic and let $x \in F^\times$ with $x \sim 1$ and $p, q \in \N[X, X^{-1}]$ nonzero. 
	Then:
	\begin{enumerate}[label=(\roman*)]
		\item\label{arctic_formula_i} $p(x) \approx x^{p'(1)} + (p(1) - 1)$.
		\item\label{arctic_formula_iii} If $x > 1$, then $p(x) \le q(x)$ if and only if $p(1) = q(1)$ and $p'(1) \le q'(1)$.
		\item\label{arctic_formula_iv} If $x \not\approx 1$, then $p(x) \approx q(x)$ if and only if $p(1) = q(1)$ and $p'(1) = q'(1)$.
	\end{enumerate}
\end{lem}

It is instructive to consider how these formulas manifest themselves in the case of \Cref{dual_numbers_semifield}.

\begin{proof}
	For \ref{arctic_formula_i}, we use induction on the sum of coefficients $p(1)$, where the base case $p(1) = 1$ is trivial since then $p$ is necessarily a single monomial.
	For the induction step, we write $p = x^{\deg(p)} + \hat{p}$ for some $\hat{p} \in \N[X, X^{-1}]$, and obtain by the induction assumption
	\[
		p(x) \approx x^{\hat{p}'(1)} + (\hat{p}(1) - 1) + x^{\deg(p)} \approx x^{\hat{p}'(1) + \deg(p)} + \hat{p}(1) \approx x^{p'(1)} + (p(1) - 1),
	\]
	where the second step is by \Cref{arctic_main}.

	For \ref{arctic_formula_iii}, the ``if'' part is immediate from \ref{arctic_formula_i}.
	For the ``only if'' part, we assume $p(x) \le q(x)$.
	We first derive $p(1) = q(1)$ by showing that $p(1) \neq q(1)$ leads to a contradiction.
	We have
	\[
		p(1) \sim p(x) \sim q(x) \sim q(1).
	\]
	In terms of $r \coloneqq p(1)$ and $s \coloneqq q(1)$, we hence obtain $r \sim s$ although $r \neq s$.
	Then since $F$ is arctic, we get $\frac{r}{s} + \frac{s}{r} \approx 2$.
	Thus $t \coloneqq \frac{1}{2}\left(\frac{r}{s} + \frac{s}{r}\right)$ is a positive rational in $(1,\infty)$ with $t \approx 1$.
	Since some power $t^n$ exceeds $2$ in the usual order on $\Q_+$, we can write
	\[
		2 = \frac{t^n + w}{1 + w}
	\]
	for a unique $w \in \Q_{>0}$ and then also conclude $2 \approx 1$.
	But then
	\[
		3 x \approx 2 + x^3 \approx 1 + 2 x^3 \approx 3 x^2,
	\]
	which contradicts the assumed $x > 1$.
	Therefore indeed $p(1) = q(1)$.

	Now consider again $p(x) \le q(x)$.
	Since $p(1) = q(1)$, we obtain
	\[
		x^{p'(1)} + (p(1) - 1) \le x^{q'(1)} + (p(1) - 1)
	\]
	by \ref{arctic_formula_i}.
	This implies $x^{p'(1)} + 1 \le x^{q'(1)} + 1$ by chaining.
	But then $x^{p'(1)} \le x^{q'(1)}$ because of \Cref{cancel2plus}, which implies the missing $p'(1) \le q'(1)$ thanks to $x > 1$.
	
	Finally, \ref{arctic_formula_iv} follows directly from \ref{arctic_formula_iii}.
\end{proof}

\subsection*{Away from the tropical case}

If $F$ is max-tropical, then we have $x^{-n} + 1 \approx 2$ for every nonzero $x > 1$ and $n \in \N$. If $F$ is min-tropical, then we similarly have $x^n + 1 \approx 2$. The following result can be thought of as providing converse statements.

\begin{lem}
	\label{nontropical_cancel}
	Let $x \in F^\times$ with $x > 1$.
	\begin{enumerate}
		\item If $F$ is not min-tropical, then
			\[
				x^n + 1 < x^{n+1} + 1
			\]
			for all $n \in \N$.
		\item If $F$ is not max-tropical, then
			\[
				x^{-(n+1)} + 1 < x^{-n} + 1
			\]
			for all $n \in \N$.
	\end{enumerate}
	Thus if $F$ is not tropical, then the map $n \mapsto x^n + 1$ is strictly increasing across all $n \in \Z$.
\end{lem}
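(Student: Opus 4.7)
The plan is to prove part (1) by contradiction, and then obtain part (2) by applying part (1) to $F^{\op}$ with the element $x^{-1}$: since $F^{\op}$ is not min-tropical iff $F$ is not max-tropical, and the inequality $x^n + 1 < x^{n+1} + 1$ in $F^\op$ evaluated at $x^{-1}$ transcribes back to $x^{-(n+1)} + 1 < x^{-n} + 1$ in $F$, this duality handles part (2) for free. (One also needs that $F^\op$ is still multiplicatively Archimedean and fully preordered, both of which are immediate.)

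The key preliminary step for part (1) is to derive that $x + 1 > 2$ whenever $F$ is not min-tropical and $x > 1$. Since $x \ge 1$ implies $x^2 \ge 1$, we always have $x^2 + 1 \ge 2$. If additionally $x^2 + 1 \le 2$ held, then multiplying by $x^{-1}$ would give $x + x^{-1} \le 2 x^{-1}$; combined with $x + x^{-1} \ge 2 x^{-1}$ (from $x \ge x^{-1}$, itself a consequence of $x^2 \ge 1$), this would force $x + x^{-1} \approx 2 x^{-1}$, making $F$ min-tropical by \Cref{three_cases}, a contradiction. Hence $x^2 + 1 > 2$, and then $(x+1)^2 = (x^2 + 1) + 2x > 2 + 2x = 2(x+1)$, so cancelling the invertible factor $x + 1$ yields $x + 1 > 2$.

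Now suppose for contradiction that $x^{n+1} + 1 \le x^n + 1$ for some $n \in \N$. By \Cref{other_power_lemma} this upgrades to $(x+1)^m \le 2^m x^n$ for every $m \in \N$. On the other hand, $(x+1)/2 > 1$ and $x^{n+1} > 1$, so multiplicative Archimedeanicity provides some $k \in \N$ with $x^{n+1} \le ((x+1)/2)^k$, i.e.\ $2^k x^{n+1} \le (x+1)^k$. Chaining these two bounds forces $x^{n+1} \le x^n$, contradicting the strict inequality $x^{n+1} > x^n$ obtained from $x > 1$ by multiplication by $x^n$ (an order automorphism of $F$). Fullness of the preorder, together with $x^n + 1 \sim 1 \sim x^{n+1} + 1$, ensures that one of $x^n + 1 \le x^{n+1} + 1$ or $x^{n+1} + 1 \le x^n + 1$ must hold; since we have ruled out the latter, the strict inequality $x^n + 1 < x^{n+1} + 1$ follows.

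The main obstacle I anticipate is the uniformity in the temperate cases. A naive type-by-type split works easily for max-tropical and arctic via \Cref{tropical_add_full} and \Cref{arctic_formula}, and is straightforward in max-temperate, but it is awkward for min-temperate: there $x + x^{-1} < 2$ forces $(x+1)^2 \le 4x$, so direct exponentiation estimates for $(x+1)^m$ versus $2^m x^n$ head in the wrong direction. Routing everything through the single bound $x + 1 > 2$ and then combining \Cref{other_power_lemma} with multiplicative Archimedeanicity is what allows one argument to cover all four non-min-tropical types at once.
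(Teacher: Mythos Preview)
Your proof is correct and follows essentially the same route as the paper's. Both arguments reduce part (2) to part (1) by passing to $F^\op$, and both prove part (1) by contradiction via \Cref{other_power_lemma}, linking the failure of strict monotonicity to min-tropicality through the comparison of $x+1$ with $2$. The only organizational difference is that the paper runs the implication in the order ``$(x+1)^m \le 2^m x^n$ for all $m$ $\Rightarrow$ $x+1 \le 2$ (via \Cref{rate_crit}) $\Rightarrow$ min-tropical'', whereas you first establish $x+1 > 2$ from non-min-tropicality and then invoke multiplicative Archimedeanicity directly to obtain $x^{n+1} \le x^n$; since \Cref{rate_crit} is itself just a packaging of multiplicative Archimedeanicity, the two arguments are contrapositives of one another with identical ingredients.
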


\begin{proof}
	These two cases become equivalent upon reversing the order and replacing $x$ by $x^{-1}$. We therefore only treat the first case.
	
	If we had $x^{n+1} + 1 \le x^n + 1$ for some $n$, then we would get $(x + 1)^m \le 2^m x^n$ for all $m \in \N$ by \Cref{other_power_lemma} and therefore $x + 1 \le 2$ by \Cref{rate_crit}. But then also
	\[
		x^2 + 3 \le x^2 + 2 x + 1 = (x + 1)^2 \le 4,
	\]
	so that chaining gives $3(x^2 + 1) \le 6$, or equivalently $x + x^{-1} \le 2 x^{-1}$, which contradicts the assumption that $F$ is not min-tropical.
\end{proof}

\begin{lem}
	\label{power_lemma_nontropical}
	Suppose that $F$ is not max-tropical and let $x \in F^\times$. If $x > 1$, then for every $n \in \Nplus$ there is $k \in \N$ such that
	\[
		x^n + k \le (k + 1) x.
	\]
\end{lem}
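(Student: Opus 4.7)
The plan is to induct on $n$, reducing to the single base case $n = 2$. The case $n = 1$ is trivial with $k = 0$. For the step, I assume the result holds both for the current $n$ (with some $k_n$) and for $n = 2$ (with some $k_2$, whose existence is the substantive content). Multiplying the $n$-case bound by $x$ gives $x^{n+1} + k_n x \le (k_n + 1) x^2$, multiplying the $n = 2$ bound by $k_n + 1$ gives $(k_n + 1) x^2 + (k_n + 1) k_2 \le (k_n + 1)(k_2 + 1) x$, and chaining these yields
\[
    x^{n+1} + k_n x + (k_n + 1) k_2 \le (k_n + 1)(k_2 + 1) x.
\]
Replacing $k_n x$ by $k_n$ on the left (using $k_n \le k_n x$, which follows from $1 \le x$) and invoking the identity $(k_n + 1)(k_2 + 1) = [k_n + (k_n + 1) k_2] + 1$ completes the step with $k_{n+1} := k_n + (k_n + 1) k_2$.

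So everything reduces to finding $k$ with $x^2 + k \le (k+1) x$. Here the plan is to split by type using \Cref{three_cases}. Since $F$ is not max-tropical, it is arctic, min-temperate, min-tropical, or max-temperate. In the first three subclasses one has $x + x^{-1} \le 2$, so applying \Cref{power_lemma}\ref{2nd_order_dominance} in $F^\op$ gives $m x^n + n x^{-m} \le m + n$ in $F$. Specializing $m = n = 1$ and multiplying by $x$ yields $x^2 + 1 \le 2x$, so $k = 1$ works.

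The remaining max-temperate case, where $x + x^{-1} > 2$ strictly, is the main obstacle: $k = 1$ fails since $x^2 + 1 > 2x$, and the required $k$ must depend on the ``size'' of $x$. The plan here is to combine the strict inequality $x + 1 < 2x$ (which follows from \Cref{nontropical_cancel}, since $F$ is not max-tropical) with multiplicative Archimedeanicity: in max-temperate $F$ one can argue that $2 > 1$ in $F$, so multiplicative Archimedeanicity produces some $M$ with $x \le 2^M$, and then an explicit natural number $k$ of size comparable to $2^M$ can be chosen for which the inequality $x^2 + k \le (k+1) x$ is verified by carefully chaining the available semifield bounds. This final chaining in the max-temperate case is the main technical hurdle, since the absence of subtraction in a semifield blocks a direct transcription of the $\R_+$-style argument ``$k(x-1) \ge x(x-1)$ whenever $k \ge x$''.
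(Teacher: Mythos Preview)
Your reduction to $n=2$ via the induction $n \mapsto n+1$ is correct (the paper instead goes $n \mapsto 2n$ and invokes monotonicity in $n$, but your version is fine). The treatment of the case $x + x^{-1} \le 2$ with $k = 1$ also matches the paper. The gap is entirely in the max-temperate case for $n = 2$, and it is a real one.

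Your plan there hinges on the claim that $2 > 1$ in a max-temperate multiplicatively Archimedean fully preordered semifield, so that Archimedeanicity yields $x \le 2^M$. This claim is false in general. Take $F = (\R_{>0} \times \R_{>0}) \cup \{0\}$ with componentwise semifield operations and the preorder $(a,b) \le (c,d) :\Longleftrightarrow a = c \text{ and } b \le d$. One checks that this is a multiplicatively Archimedean fully preordered semifield of max-temperate type (the elements $>1$ are exactly the $(1,b)$ with $b>1$), yet $1 = (1,1)$ and $2 = (2,2)$ lie in different $\sim$-classes and are therefore incomparable. So ``$x \le 2^M$'' cannot be obtained this way, and the subsequent ``careful chaining'' you allude to is not merely unwritten but cannot get off the ground as planned.

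The paper's route around this obstacle is worth noting: from $x + 1 < 2x$ (\Cref{nontropical_cancel}) it applies \Cref{rate_crit} not to $2$ versus $1$ but to $2x$ versus $x+1$, which \emph{are} in the same $\sim$-class. This yields some $m$ with $(x+1)^{m+1} \le 2^{m+1} x^m$, hence after expanding and using $x \ge 1$ an inequality of the form $x^{m+1} + k \le (k+1) x^m$. A short descent argument in $m$ (increasing $k$ by one at each step) then reaches $m=1$. The key idea you are missing is that multiplicative Archimedeanicity should be applied to the comparable pair $x+1$ and $2x$, not to the possibly incomparable pair $1$ and $2$.
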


\begin{proof}
	We show this first for $n = 2$. If $x + x^{-1} \le 2$, then this holds with $k = 1$, so assume $x + x^{-1} > 2$, meaning that $F$ is max-temperate.
	
	Using $x + 1 < 2 x$ from \Cref{nontropical_cancel}, \Cref{rate_crit} shows that there is $m \ge 2$ such that
	\[
		(x + 1)^{m+1} \le 2^{m+1} x^m.
	\]
	Expanding the left-hand side and using $x \ge 1$ gives the weaker bound
	\[
		x^{m+1} + (2^{m+1} - 1) \le 2^{m+1} x^m.
	\]
	Thus there are $m,k \in \N$ such that
	\beq
		\label{mestimate}
		x^{m+1} + k \le (k + 1) x^m.
	\eeq
	We now claim that if this holds for some $m \ge 2$, then it also holds with $m - 1$ in place of $m$. Indeed the following estimates show that it is enough to increase $k$ by $1$,
	\begin{align*}
		(x + 1) (x^m + (k + 1))		& = x^{m+1} + x^m + (k + 1) x + k + 1 \\
						& \le (k + 2) x^m + (k + 1) x + 1 \\
						& \le (k + 2) x^m + (k + 2) x^{m-1} \\
						& = (x + 1) (k + 2) x^{m-1},
	\end{align*}
	where the first inequality step uses the assumption, and the second one uses merely $x \ge 1$ and $m \ge 2$. Upon iterating this argument, we therefore conclude that \eqref{mestimate} holds even with $m = 1$, meaning that there is $k$ such that
	\[
		x^2 + k \le (k + 1) x,
	\]
	as was to be shown for $n = 2$.

	We now show that if the claim holds for $n \ge 2$, then it also holds for $2n$,
	\begin{align*}
		x^{2n} + (k + 1)^3	& \le x^{2n} + 2 k x^n + k^2 + (k^3 + 2 k^2 + k + 1) \\
					& = (x^n + k)^2 + (k^3 + 2 k^2 + k + 1) \\
					& \le (k + 1)^2 x^2 + (k^3 + 2 k^2 + k + 1) \\
					& = (k + 1)^2 \left(x^2 + k \right) + 1 \\
					& \le \left( (k + 1)^3 + 1 \right) x,
	\end{align*}
	where the first inequality step uses only $x \ge 1$ and the other two use the assumption.
	In particular the claim holds whenever $n$ is a power of two. This is enough for the general case by the monotonicity in $n$ proven just before.
\end{proof}

We also derive a further statement which makes explicit use of positive linear combinations with rational coefficients. Recall that these exist in any strict semifield.

\begin{lem}
	\label{nottrop_convex}
	Suppose that $F$ is not tropical and $x \in F^\times$. If $x > 1$, then for every rational $r \in (0,1)$, we have
	\[
		1 < r x + (1 - r) < x.
	\]
\end{lem}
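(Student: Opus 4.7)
The plan is to separately establish $q < px + (q-p)$ and $px + (q-p) < qx$ in $F$, after writing $r = p/q$ with $p, q \in \Nplus$ satisfying $p < q$ and multiplying the original inequalities through by $q$. By the type classification of \Cref{three_cases}, since $F$ is non-tropical, it is arctic, max-temperate, or min-temperate. The min-temperate case reduces to the max-temperate case by passing to the opposite semifield $F^\op$ (which is then max-temperate), applying the claim to the element $x^{-1} > 1$ in $F^\op$ with parameter $1 - r$, and translating the resulting inequalities back to $F$ and multiplying through by $x$.

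For arctic $F$, the argument is direct via \Cref{arctic_formula}\ref{arctic_formula_iii}: the three elements under comparison arise as the values at $x$ of the polynomials $q$, $pT+(q-p)$, and $qT$ in $\N[T]$, each of which evaluates to $q$ at $T = 1$ (so the coefficient sums match), while their derivatives at $1$ are $0$, $p$, and $q$ respectively. Since $0 < p < q$, the required strict inequalities follow immediately from the arctic formula.

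For max-temperate $F$, the plan is to build on \Cref{nontropical_cancel}: since $F$ is both non-min-tropical and non-max-tropical, that lemma provides the strict base inequalities $2 < x + 1$ and $1 + x^{-1} < 2$. To extend these to the integer-parameter form, I will argue by contradiction. Assuming $px + (q-p) \approx q$, chaining via \Cref{sharpen_add} yields $n(px) + (q-p) \le np + (q-p)$ for every $n \in \Nplus$. Dividing through and rearranging, this takes the form $X_n + 1 \le Y_n + 1$, to which \Cref{cancel1} applies and gives $X_n^k \le Y_n^{k+1}$ for all $k$. Unpacking this in terms of $x$ shows that arbitrarily high powers of $x$ are bounded above by a fixed element of $F$, which contradicts multiplicative Archimedeanicity since $x > 1$. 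The second inequality $px + (q-p) < qx$ is handled by the symmetric argument applied to $x^{-1}$ in place of $x$.

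The main obstacle will be to verify the quantitative hypotheses of \Cref{cancel1}, namely that $X_n + X_n^{-1} \ge 2$ and $Y_n \ge 1$ for the specific scaled elements $X_n$ and $Y_n$ appearing in the argument. This will require checking that the positive rationals and the element $x$ all lie in the $\sim$-equivalence class of $1$ in $F$, which is automatic for the archetype $\R_+$ but must be verified for more general max-temperate semifields, since the assumption $1 \ge 0$ is not imposed in the paper.
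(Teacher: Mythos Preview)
Your arctic case via \Cref{arctic_formula} is fine, and the reduction of the min-temperate case to the max-temperate one via $F \mapsto F^\op$ is correct. The problem lies in the max-temperate argument, and it is exactly the obstacle you flag at the end: it is \emph{not} resolvable along the lines you sketch.

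After chaining you arrive at $X_n + 1 \le Y_n + 1$ with $X_n = \tfrac{np}{q-p}\,x$ and $Y_n = \tfrac{np}{q-p}$. To invoke \Cref{cancel1} you need $Y_n \ge 1$ and $X_n + X_n^{-1} \ge 2$. But in a multiplicatively Archimedean fully preordered semifield there is no reason for a positive rational like $\tfrac{np}{q-p}$ to be $\ge 1$, or even $\sim 1$: the hypothesis $1 \ge 0$ is absent, so distinct natural numbers need not be comparable (already in the arctic example $\R_{(+)}[X]/(X^2)$ one has $2 \not\sim 1$, and nothing in the definition of ``max-temperate'' forces this to change). Consequently neither $Y_n \ge 1$ nor $X_n \sim 1$ can be established, and without the latter the max-temperate inequality $z + z^{-1} > 2$ (which only applies to $z \sim 1$) says nothing about $X_n + X_n^{-1}$. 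Your contradiction argument therefore does not get off the ground.

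The paper's proof sidesteps all of this: it makes no case split by type and never compares rationals to $1$. It reduces the second inequality to the first by the substitution $(x,r) \mapsto (x^{-1}, 1-r)$, then observes that $r \mapsto rx + (1-r)$ is monotone, so it suffices to treat $r = 2^{-n}$. The base case $2 < x + 1$ is exactly \Cref{nontropical_cancel}, and the induction step simply reapplies this same inequality to $y \coloneqq 2^{-n} x + (1 - 2^{-n})$, which satisfies $y > 1$ by the induction hypothesis. This two-line induction uses only the non-tropicality assumption through \Cref{nontropical_cancel} and avoids any appeal to \Cref{cancel1}, \Cref{arctic_formula}, or the type classification.
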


\begin{proof}
	By reversing the order and replacing $x$ by $x^{-1}$ and $r$ by $1-r$, the second inequality reduces to the first.
	We therefore only prove the first. 

	Since the expression $r x + (1 - r)$ is obviously non-strictly monotone in $r$, it is enough to prove the claim for $r = 2^{-n}$ with $n \in \Nplus$, in which case it amounts to
	\[
		1 < 2^{-n} x + (1 - 2^{-n}).
	\]
	We indeed have $1 + 1 < 1 + x$ by \Cref{nontropical_cancel}, which is the $n = 1$ case. For the induction step from $n$ to $n + 1$, we apply this same inequality $1 + 1 < 1 + y$ with $y \coloneqq 2^{-n} x + (1 - 2^{-n})$, which satisfies $y > 1$ by the induction assumption.
\end{proof}

\subsection*{Away from the arctic case}

While the previous lemmas were concerned with $F$ not being max-tropical or min-tropical, we now consider a similar statement for $F$ not arctic.

\begin{lem}
	\label{nonarctic_bound_concrete}
	Suppose that $F$ is not arctic and $x \in F^\times$. If $x > 1$ and $x + x^{-1} > 2$, then for every $\ell \in \N$ there is $n \in \N$ such that for all $m \in \N$.
	\[
		x^{n+1} + \ell x^{-m} > x^n + \ell.
	\]
\end{lem}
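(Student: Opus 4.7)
The plan is to find $n$ large enough depending on $\ell$ so that the inequality holds uniformly in $m$, splitting into the two possible types for $F$: max-tropical and max-temperate, since the strict hypothesis $x+x^{-1}>2$ together with $F$ not arctic excludes the three remaining types of \Cref{five_types_def}.

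In the max-tropical case, I would choose $n$ via multiplicative Archimedeanicity so that $x^n$ strictly dominates $\ell$, and then apply \Cref{tropical_add_full}, which gives $a+b \approx 2\max(a,b)$ whenever $a \sim b$, to rewrite both sides of the target as $x^n + \ell \approx 2 x^n$ and $x^{n+1} + \ell x^{-m} \approx 2 x^{n+1}$ uniformly in $m$; the strict inequality then reduces to $2 x^{n+1} > 2 x^n$, which holds by $x > 1$. A prerequisite is that $\ell$ lies in the $\sim$-class of $1$ so that \Cref{tropical_add_full} is applicable, which I would establish from $x+x^{-1}>2$ and fullness by chaining $2 \sim x+x^{-1} \sim 2x$ down to $1$ using $x \sim 1$.

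In the max-temperate case, my aim is to produce $n$ with $x^{n+1} > x^n + \ell$ strictly; multiplying by $x^m$ then gives $x^{n+m+1} > x^{n+m} + \ell x^m$ strict, and monotonically adding $\ell$ on the LHS yields the target, which after dividing by $x^m$ recovers the original inequality. To produce the strict slack, I would combine the strict hypothesis $x^2 + 1 > 2x$ (obtained by multiplying $x+x^{-1}>2$ by $x$) with \Cref{nonarctic_bound} applied to a high enough power $x^k$ so that the precondition $(x^k)^2 + 2 \geq 3 x^k$ is satisfied: this produces a doubling estimate $x^{k(N+1)} + 1 \geq 2 x^{kN}$ whose iteration, combined with multiplicative Archimedeanicity applied to $\ell$, gives the required slack for $N$ large enough.

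The main obstacle is maintaining strictness through the long chain of non-strict supermodularity-type estimates from \Cref{power_lemma} and \Cref{super_general}: the single strict step traces back to $x+x^{-1}>2$, and the non-arctic hypothesis ensures this strictness does not collapse to $\approx$ equality. A secondary subtlety is that the transition from $x^{n+m+1} > x^{n+m} + \ell x^m$ to $x^{n+m+1} + \ell > x^{n+m} + \ell x^m$ requires adding $\ell$ on the LHS, which would typically use $\ell \geq 0$ in $F$; I expect this to be handled by initially proving the stronger strict inequality $x^{n+1} > x^n + 2\ell$ (obtained by choosing $n$ slightly larger) so that the extra $\ell$ on the LHS can be absorbed into the existing slack, or alternatively by invoking a cancellation argument along the lines of \Cref{cancel2plus}.
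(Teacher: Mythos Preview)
Your proposal has a genuine structural gap that affects both cases of your case split: you implicitly assume that the natural number $\ell$ lies in the $\sim$-class of $1$, but this is not given and your attempted justification is circular. The chain you propose, $2 \sim x + x^{-1} \sim 2x$, only yields $2 \sim 2x$, and combining this with $x \sim 1$ merely recovers $2 \sim 2$; it does not produce $2 \sim 1$. The whole point of the setting in this section is that $1$ and $0$ need not be ordered, and in particular $2 \sim 1$ can fail.

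This missing hypothesis is load-bearing in both branches. In the max-tropical case, \Cref{tropical_add_full} applies to $x^n + \ell$ only if $x^n \sim \ell$, i.e.\ $\ell \sim 1$; and ``choosing $n$ via multiplicative Archimedeanicity so that $x^n$ strictly dominates $\ell$'' already presupposes comparability of $x^n$ and $\ell$. In the max-temperate case, your intermediate target $x^{n+1} > x^n + \ell$ has left side $\sim 1$ and right side $\sim 1 + \ell$, so even stating it as a strict inequality requires $1 \sim 1+\ell$. Your follow-up step ``add $\ell$ on the LHS'' then needs $\ell \ge 0$, which again fails here; the proposed fixes (proving $x^{n+1} > x^n + 2\ell$, or invoking \Cref{cancel2plus}) do not circumvent this, since the same comparability issue recurs. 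Finally, obtaining the hypothesis $(x^k)^2 + 2 \ge 3x^k$ of \Cref{nonarctic_bound} ``via multiplicative Archimedeanicity applied to $\ell$'' does not make sense for the same reason.

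The paper's proof avoids this obstacle entirely by never separating $\ell$ from a power of $x$ on either side. It proceeds through a chain of auxiliary inequalities of the shape $x^n + \ell x^{-m} \ge \text{(something)} + \ell$, where both sides always lie in the class of $1+\ell$: first a bootstrapping argument pushing the exponent ratio in $x^n + x^{-n} \ge 2x^m$ towards $1$, then \Cref{nonarctic_bound} to get $x^n + x^{-m} \ge 2$ uniformly in $m$, then a doubling in $\ell$, and finally a halving in the exponent gap which is where the single strict step $x^2 + x^{-2} > x + x^{-1}$ (available from $x + x^{-1} > 2$) enters to upgrade $\ge$ to $>$.
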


While this quite clear in the max-tropical case, the main difficulty lies in proving it in the max-temperate case (but restricting to this case explicitly would not simplify the proof).

\begin{proof}
	We prove a number of auxiliary statements first before getting to the claim itself.
	\begin{enumerate}
		\item\label{epspositive} There is $n \in \N$ such that $x^n + x^{-n} \ge 2x$.
			
			Indeed by \Cref{rate_crit}, there is $n \in \N$ such that $(x + x^{-1})^n \ge 2^n x$. Hence by \Cref{power_lemma},
			\[
				2^{n-1} (x^n + x^{-n}) \ge (x + x^{-1})^n \ge 2^n x,
			\]
			as was to be shown.
		\item\label{epsone} For every $\eps < 1$ in $\R$ there are $m,n \in \Nplus$ such that $m > \eps n$ and
			\beq
				\label{super_dominance}
				x^n + x^{-n} \ge 2 x^m.
			\eeq
			
			Indeed if this inequality holds for given $n$ and $m$, then it also holds for all multiples, since for every $\ell \in \Nplus$,
			\[
				x^{\ell n} + x^{-\ell n} \ge 2^{1-\ell } (x^n + x^{-n})^\ell \ge 2 x^{\ell m},
			\]
			where the first step is by \Cref{power_lemma} and the second by assumption. Now let $\eps$ be the supremum of all fractions $\frac{m}{n}$ for which the inequality~\eqref{super_dominance} holds; our goal is to show that $\eps = 1$, whereas what we know by \ref{epspositive} is $\eps > 0$. Indeed we claim that $\eps \ge \frac{3 - \eps^2}{2} \eps$,
			which then implies $\eps = 1$ because of $0 < \eps \le 1$. In order to prove this claim, suppose that a fraction $\frac{m}{n}$ satisfies the inequality. Then also
			\begin{align*}
				2(x^{2n^3} + x^{-2n^3}) & \ge (x^{2n^3} + x^{2mn^2}) + (x^{2mn^2} + x^{-2n^3}) \\[2pt]
							& = x^{n^2(n+m)} (x^{n^2(n-m)} + x^{-n^2(n-m)}) + x^{-n^2(n-m)} (x^{n^2(n+m)} + x^{-n^2(n+m)}) \\[2pt]
							& \ge 2 \left( x^{n^2(n+m)} x^{mn(n-m)} + x^{-n^2(n-m)} x^{mn(n+m)} \right) \\[2pt]
							& = 2 \left( x^{n(n^2 + 2mn - m^2)} + x^{-n(n^2 - 2mn - m^2)} \right) \\[2pt]
							& = 2 x^{2mn^2} (x^{n(n^2 - m^2)} + x^{-n(n^2 - m^2)}) \\[2pt]
							& \ge 4 x^{2mn^2} x^{m(n^2 - m^2)} \\[2pt]
							& = 4 x^{3mn^2 - m^3}
			\end{align*}
			where all inequality steps are per the above. Therefore
			\[
				\eps \ge \frac{3mn^2 - m^3}{2n^3} = \frac{3-\left(\frac{m}{n}\right)^2}{2} \cdot \frac{m}{n} 
			\]
			Thus as $\frac{m}{n} \nearrow \eps$, we get the claimed $\eps \ge \frac{3 - \eps^2}{2} \, \eps$.
		\item\label{ellone} There is $n \in \N$ such that
			\[
				x^n + x^{-m} \ge 2
			\]
			for all $m \in \N$.

			Taking $\eps = \frac{1}{2}$ in \Cref{epsone}, we have $n$ such that
			\[
					x^{2n} + x^{-2n} \ge 2 x^n.
			\]
			There is no loss in replacing $x$ by $x^n$, so that we can assume $x^4 + 1 \ge 2 x^3$ without loss of generality. But then also
			\[
				x^4 + 2 \ge 2 x^3 + 1 \ge 3 x^2,
			\]
			where the second step is by \Cref{power_lemma}. Therefore $x^2$ satisfies the hypotheses of \Cref{nonarctic_bound}, and we get that
			\[
				x^{2(m + 1)} + 1 \ge 2 x^{2m} 
			\]
			for all $m \in \N$. Therefore also
			\[
				x^2 + x^{-2m} \ge 2
			\]
			for all $m \in \N$, which is enough.
		\item\label{nzero} For every $\ell \in \N$ there is $n \in \N$ such that
			\[
				x^n + \ell x^{-m} \ge 1 + \ell
			\]
			for all $m \in \N$.
		
			Indeed for $\ell = 1$, this is exactly \ref{ellone}. Moreover if the inequality holds for some $\ell$, then it also holds for all $\ell' < \ell$, since multiplying the inequality by $\ell'$ and adding $\ell - \ell'$ times the inequality $x^n \geq 1$ results in
			\[
				\ell x^n + \ell \ell' x^{-m} \ge \ell + \ell \ell',
			\]
			which is equivalent to the desired inequality with $\ell'$ in place of $\ell$. Therefore it is enough to show that if the statement holds for given $\ell\in\N$, then it also holds for $2\ell + 1$.

			This step from $\ell$ to $2\ell + 1$ works as follows,
			\begin{align*}
				x^{2n} + (2 \ell + 1) x^{-m} & = x^n(x^n + \ell x^{-m-n}) + (\ell + 1) x^{-m} \\
				& \ge (\ell + 1) x^n + (\ell + 1) x^{-m} \\
				& = (\ell + 1) (x^n + x^{-m}) \\
				& \ge 2 (\ell + 1) ,
			\end{align*}
			where we have assumed that the given $n$ is large enough to work both for the given $\ell$ and for $\ell = 1$. 
		\item The actual claim is then the $k = 1$ case of the following: for every $k \in \Nplus$ and $\ell \in \N$ there is $n \in \N$ such that
			\[
				x^{n+k} + \ell x^{-m} \ge x^n + \ell
			\]
			for all $m \in \N$.

			Indeed \ref{nzero} shows that this holds for some $k$ with $n = 0$. Since it automatically holds for all larger $k$, it is enough to show that if the statement holds for a given even $k$, then it also holds with $\frac{k}{2}$ in place of $k$. Assuming $k$ to be even without loss of generality and replacing $x$ by $x^k$, it is enough\footnote{Note that $x^k + x^{-k} > 2$ by \Cref{power_lemma}.} to show that the $k = 2$ case implies the $k = 1$ case, at the cost of replacing $\ell$ by $2\ell$ and $n$ by $n + 3$,
			\begin{align*}
				(x + x^{-1}) (x^{n+4} + \ell x^{-m})	& = x^{n+5} + x^{n+3} + \ell x^{-m+1} + \ell x^{-m-1} \\
									& \ge x^{n+5} + x(x^{n+2} + 2 \ell x^{-m-2}) \\
									& \ge x^{n+5} + x(x^n + 2 \ell) \\
									& \ge x^{n+4} + x^{n+2} + 2 \ell x \\
									& \ge (x + x^{-1}) (x^{n+3} + \ell),
			\end{align*}
			where the first and fourth inequality step use merely $x \ge 1$, the second is by assumption, and the third by $x^2 + x^{-2} \ge x + x^{-1}$ from \Cref{power_lemma}.\qedhere
	\end{enumerate}
\end{proof}

It may also be of interest to know under which conditions a semifield $F$ can support any nontrivial full semifield preorder at all that is multiplicatively Archimedean and arctic.
The following result provides one relevant criterion.

\begin{prop}
	\label{no_arctic}
	Let $F$ be a strict semifield with quasi-complements such that the ring $F \otimes \Z$ is absolutely flat\footnote{Recall that a ring $R$ is \newterm{absolutely flat} if every ideal in $R$ is idempotent. For example, every product of fields is absolutely flat.}. Then every multiplicatively Archimedean full semifield preorder on $F$ is temperate or tropical.
\end{prop}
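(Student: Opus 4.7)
The plan is to argue by contradiction. Suppose that $F$ carries a nontrivial multiplicatively Archimedean full arctic semifield preorder, and pick a nonzero $x > 1$ in $F$ (so that $x \sim 1$). The strategy is to combine absolute flatness of $F \otimes \Z$ with the structural arctic identities from \Cref{arctic_main} and \Cref{arctic_formula} to force an impossible $\approx$-relation.

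Absolute flatness of $F \otimes \Z$ is equivalent to von Neumann regularity: for every $a \in F \otimes \Z$ there is $b \in F \otimes \Z$ with $aba = a$, so $ab$ is idempotent and $a(1-ab)=0$. Applied to $a = x - 1$, this yields a dichotomy: either $x - 1$ is a unit in $F \otimes \Z$, or else $x - 1$ is a zero divisor. Treating the unit case in detail, write the inverse as $y = y_1 - y_2$ with $y_i \in F$ (using quasi-complements to represent elements of $F \otimes \Z$ as differences of elements of $F$), and arrange $y_1, y_2 \sim 1$ by replacing $y$ with $\lambda y$ for a suitable $\lambda \sim 1$. Unpacking $(x-1)(y_1 - y_2) = 1$ via the Grothendieck construction gives an equation
\[
	x y_1 + y_2 + c = x y_2 + y_1 + 1 + c
\]
in $F$, where the witness $c$ can be taken to be a natural number $n$ using quasi-complements. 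Now apply \Cref{arctic_main} to the $\sim 1$-pairs $(xy_1, y_2)$ and $(xy_2, y_1)$ to obtain $xy_1 + y_2 \approx xy_1 y_2 + 1 \approx xy_2 + y_1$. Combined with the equation, this produces $Z + 1 \approx Z + 2$ in $F$, where $Z := xy_1 y_2 + n$. Provided $Z \sim 1$, this contradicts \Cref{arctic_formula}\ref{arctic_formula_ii}, since the polynomials $T + 1$ and $T + 2$ disagree at $T = 1$. The zero-divisor case is handled analogously by applying absolute flatness a second time to the zero-divisor witness, producing an analogous collapse.

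The main obstacle is securing the hypothesis $Z \sim 1$: since the paper does not assume $1 \geq 0$, a natural-number summand $n$ need not lie in the $\sim$-class of $1$ in $F$. To navigate this, I would exploit fullness, multiplicative Archimedeanicity, and the cancellation criterion \Cref{cancel2plus}, combined with iteration via \Cref{sharpen_add}. Chaining propagates the relation $Z + 1 \approx Z + 2$ to a family of $\approx$-collapses that, together with the arctic inequality $x + x^{-1} \geq 2$ and an auxiliary $n^{-1}$-scaled version of the same equation obtained by multiplying through by $n^{-1}$, meet the hypotheses of \Cref{cancel2plus} and force $x \leq 1$ directly, contradicting $x > 1$.
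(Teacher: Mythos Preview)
Your overall strategy---derive a contradiction from arctic type by combining absolute flatness with the arctic identity \Cref{arctic_main} and then cancelling---is the right one, and it matches the paper's. But the implementation has a genuine gap at the step where you ``arrange $y_1, y_2 \sim 1$ by replacing $y$ with $\lambda y$ for a suitable $\lambda \sim 1$''. This is not justified: $y$ is the inverse of $x-1$, so you cannot rescale it; and shifting the representation $y = y_1 - y_2$ to $(y_1 + c) - (y_2 + c)$ only helps if $y_1 \sim y_2$ already, which you do not know. Without $y_1, y_2 \sim 1$, you cannot invoke \Cref{arctic_main}. Furthermore, even granting $y_1, y_2 \sim 1$, your $Z = xy_1y_2 + n$ satisfies $Z \sim 1 + n$, not $Z \sim 1$, so the appeal to \Cref{arctic_formula} does not go through as stated. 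The zero-divisor case is also left unaddressed, and your final paragraph is too vague to count as a resolution.

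The paper avoids all of this by applying absolute flatness not to the single element $x-1$ but to the ideal $I \coloneqq \{\, b - a \mid a \sim b \text{ in } F \,\} \subseteq F \otimes \Z$. Idempotence of $I$ gives $a - 1 \in I = I^2$, so $a - 1 = \sum_i c_i (x_i - 1)(y_i - 1)$ with $x_i, y_i \sim 1$ \emph{by construction}. Then \Cref{arctic_main} applies directly to each pair, yielding $a + z \approx 1 + z$ for a suitable $z \in F$. A quasi-complement of $z$ turns this into $a + n \approx 1 + n$; chaining (\Cref{sharpen_add} with $k = n$) gives $n + na \approx 2n$, hence $1 + a \approx 2$; and finally \Cref{cancel2plus} with the trivial choice $a = 1$ forces $a \le 1$, the desired contradiction. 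The key idea you are missing is to use $I$ rather than the single element $x - 1$: this is what guarantees the factors in the decomposition lie in the connected component of $1$, which is exactly what \Cref{arctic_main} requires.
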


\begin{proof}
	Let $\le$ be a multiplicatively Archimedean full semifield preorder on $F$.
	Then
	\[
		I \coloneqq \{ x - y \mid x \sim y \textrm{ in } F\}
	\]
	is an ideal in $F \otimes \Z$, which is idempotent by assumption. Therefore for $a > 1$, there are elements $x_i, y_i, c_i \in F$ for $i=1,\ldots,\ell$ such that $x_i, y_i \sim 1$ and
	\[
		a - 1 = \sum_{i=1}^\ell c_i (x_i - 1) (y_i - 1)
	\]
	holds in $F \otimes \Z$. But this means that there is $d \in F$ such that
	\[
		a + \sum_{i=1}^\ell c_i (x_i + y_i) + d = 1 + \sum_{i=1}^\ell c_i (x_i y_i + 1) + d.
	\]
	Now if $\le$ was arctic, then we would have $x_i + y_i \sim x_i y_i + 1$ by \Cref{arctic_main}, and therefore $a + z \approx 1 + z$ with $z \coloneqq \sum_{i=1}^\ell c_i (x_i y_i + 1) + d$.
	Upon adding a quasi-complement of $z$ on both sides, we obtain further $a + n \approx 1 + n$ for some $n \in \N$. By chaining, we can reduce to the case $n = 1$ without loss of generality.
	But then applying the cancellation criterion of \Cref{cancel2plus} to $1 + a \le 1 + 1$ shows $a \le 1$, contradicting the initial assumption $a > 1$.
\end{proof}

For example, the categorical product of $\R_+$ with itself (any number of times) is a semifield that satisfies the assumptions, and therefore does not support any multiplicatively Archimedean full semifield preorder of arctic type.

\section{The ambient preorder}
\label{ambient}

Perhaps surprisingly, every multiplicatively Archimedean fully preordered semifield can be equipped with a canonical \emph{total} semifield preorder which extends the given preorder, and often does so in such a way that this induces an order embedding into some $\mathbb{K} \in \{\R_+, \R_+^\op, \TR_+, \TR_+^\op\}$.
This derived preorder is defined as follows, for preordered semifields in general.

\begin{defn}
	\label{ambient_defn}
	Let $F$ be a preordered semifield. Given fixed elements $a, b \in F$, the \newterm{ambient preorder} $\le\le_{a,b}$ is the relation on $F$ defined by
	\[
		x \le\le_{a,b} y \quad : \Longleftrightarrow \quad a y + b x \le a x + b y .
	\]
\end{defn}

\begin{lem}
	If $a \not\approx b$, then the ambient preorder $\le\le_{a,b}$ also makes $F$ into a preordered semifield, and $1 \ge\ge_{a,b} 0$ if and only if $a \le b$.
\end{lem}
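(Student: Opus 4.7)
The statement has two parts: verifying that $\le\le_{a,b}$ makes $F$ into a preordered semifield, and the characterization $1 \ge\ge_{a,b} 0 \iff a \le b$. The latter I would dispatch first, by simply unwinding the definition: $1 \ge\ge_{a,b} 0$ is the same as $0 \le\le_{a,b} 1$, which reads $a \cdot 1 + b \cdot 0 \le a \cdot 0 + b \cdot 1$, i.e.\ $a \le b$.

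For the preordered semifield structure, my plan is to check in turn reflexivity, monotonicity of $+$ and $\cdot$, transitivity, and the axiom $0 \le\le_{a,b} x \le\le_{a,b} 0 \Rightarrow x = 0$. Reflexivity is trivial since $ax + bx \le ax + bx$. For monotonicity of $+$, I would add $az + bz$ to both sides of $ay + bx \le ax + by$, using monotonicity of $+$ in $F$, to obtain $a(y+z) + b(x+z) \le a(x+z) + b(y+z)$, which is $x + z \le\le_{a,b} y + z$. Monotonicity of $\cdot$ follows analogously by multiplying the defining inequality by any $c \in F$ and distributing. For the semifield axiom, I would observe that $0 \le\le_{a,b} x$ and $x \le\le_{a,b} 0$ together give $ax \le bx$ and $bx \le ax$, i.e.\ $ax \approx bx$; if $x \neq 0$, multiplicative cancellation by $x^{-1}$ forces $a \approx b$, contradicting the hypothesis, so $x = 0$. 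This is the only step where the assumption $a \not\approx b$ enters.

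The main technical challenge is transitivity. Given $(\mathrm{I}): ay + bx \le ax + by$ and $(\mathrm{II}): az + by \le ay + bz$, the target is $az + bx \le ax + bz$. Summing $(\mathrm{I})$ and $(\mathrm{II})$ yields
\[
(a+b)y + (bx + az) \le (a+b)y + (ax + bz),
\]
so one would be done if $(a+b)y$ could be cancelled additively. My plan is to first handle $y = 0$ separately: in that case the two hypotheses reduce to $bx \le ax$ and $az \le bz$, whose sum is the target. For $y \ne 0$, I would divide both $(\mathrm{I})$ and $(\mathrm{II})$ by $y$ using multiplicative invertibility in the semifield, reducing to the claim that $a + bt \le at + b$ and $as + b \le a + bs$ together imply $as + bt \le at + bs$, where $t = x/y$ and $s = z/y$.

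The hard part will be this reduced implication: general preordered semifields lack additive cancellation (failing already in $\TR_+$), and the inequalities live in $F$ rather than in $F \otimes \Z$, so the immediate ring-theoretic identity $(a-b)(z-x) = (a-b)(y-x) + (a-b)(z-y)$ is not directly available. I would try to exploit the multiplicative structure further: multiplying $(\mathrm{I})$ by $z$ and $(\mathrm{II})$ by $x$ and summing produces
\[
y(az + bx) + (a+b)xz \le y(ax + bz) + (a+b)xz,
\]
which brings the target expression $y(az + bx) \le y(ax + bz)$ into view (after which multiplicative cancellation by $y$ would finish), but still requires the removal of the additive remainder $(a+b)xz$. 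Overcoming this last cancellation is where the bulk of the technical work would lie, likely via careful subcases depending on which of $x, z$ vanishes and judicious multiplications exploiting invertibility in $F$.
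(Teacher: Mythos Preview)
Your handling of reflexivity, monotonicity, the second part of the statement, and the condition $0 \le\le_{a,b} x \le\le_{a,b} 0 \Rightarrow x = 0$ is correct and matches the paper.

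The gap is in transitivity, and you have correctly located it: your approach leads to
\[
y(az + bx) + (a+b)xz \;\le\; y(ax + bz) + (a+b)xz,
\]
and you cannot additively cancel $(a+b)xz$ in a semifield. Your proposed subcase analysis on whether $x$ or $z$ vanishes does not help here, since the obstruction persists whenever both are nonzero. The reduction to the case $y = 1$ via division is also not a genuine simplification: it just restates transitivity with middle term $1$.

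The paper circumvents the additive cancellation entirely by multiplying the \emph{target} inequality by $(x+y)$ and exploiting factorizations. Concretely, one chains
\begin{align*}
(x+y)(az+bx) &= bx^2 + (az+by)x + ayz \\
             &\le bx^2 + (ay+bz)x + ayz = (x+z)(ay+bx) \\
             &\le (x+z)(ax+by) = ax^2 + (az+by)x + byz \\
             &\le ax^2 + (ay+bz)x + byz = (x+y)(ax+bz),
\end{align*}
where each inequality step uses one of the two hypotheses via monotonicity of multiplication and addition. Then, since $F$ is a semifield, $x + y$ is invertible unless $x = y = 0$ (by strictness/zerosumfreeness), and in that degenerate case the conclusion is trivial. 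The point is that multiplicative cancellation is available in a semifield even though additive cancellation is not, so the trick is to arrange the computation so that only the former is needed.
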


\begin{proof}
	The condition $a \not\approx b$ clearly guarantees $1 \,\,\cancel{\approx\approx}\,\, 0$.
	All other required properties are also straightforward to verify, apart from the transitivity of $\le\le_{a,b}$. The latter is where the assumption that $F$ is a semifield (rather than a mere semiring) comes in.
	Indeed assuming $x \le\le_{a,b} y \le\le_{a,b} z$, we have
	\[
		a y + b x \le a x + b y, \qquad a z + b y \le a y + b z.
	\]
	We then obtain
	\begin{align*}
		(x + y) (a z + b x)	& = b x^2 + (a z + b y) x + a y z \\
					& \le b x^2 + (a y + b z) x + a y z \\
					& = (x + z) (a y + b x) \\
					& \le (x + z) (a x + b y) \\
					& = a x^2 + (a z + b y) x + b y z \\
					& \le a x^2 + (a y + b z) x + b y z \\
					& = (x + y) (a x + b z) .
	\end{align*}
	Thus if $x + y \neq 0$, then the desired $x \le\le_{a,b} z$ follows. The complementary case is $x = y = 0$, in which case the claim holds trivially by $x = y$.
\end{proof}

\begin{ex}
	\label{56ex}
	The same definition of ambient preorder does not extend to general preordered semirings, since the transitivity may fail. For an explicit example, consider the semiring $S \coloneqq \N / (5 \simeq 6)$, which is $\N$ with all numbers $\ge 5$ identified with $5$. Equip $S$ with either the trivial preorder or the total preorder inherited from $\N$. In either case, we have $2 \le\le_{1,2} 5 \le\le_{1,2} 1$ but $2 \cancel{\le\le}_{1,2} 1$.
\end{ex}

\begin{rem}
	An interesting feature of the ambient preorder is its behaviour under reversing $\le$: we have $x \le\le_{a,b} y$ in $F$ if and only if $x \le\le_{b,a} y$ in $F^\op$.
\end{rem}

While the ambient preorder makes sense on any preordered semifield, we now return to the assumption that $F$ is a multiplicatively Archimedean fully preordered semifield, where the ambient preorder will facilitate the proof of our separation results.
Given such an $F$, we fix an arbitrary $u \in F^\times$ with $u > 1$. As the notation indicates, $u$ is power universal (by the definition of multiplicative Archimedeanicity). We suspect that the ambient preorder $\le\le_{1,u}$ is independent of the particular choice of $u$, but we have not been able to prove this so far, and we will not need it in the following. We nevertheless suppress the dependence on $u$ from our notation of the ambient preorder by writing $\le\le$ as shorthand for $\le\le_{1,u}$.
In other words, we put
\[
	x \le\le y \quad : \Longleftrightarrow \quad x u + y \le x + y u,
\]
and this is what we will use in the rest of this section.
By \Cref{nonarctic_bound_concrete}, we can find $n \in \N$ such that $u^{n+1} + u^{-m} \ge u^n + 1$ for all $m \in \N$.
Hence upon replacing $u$ by $u^{n+1}$ if necessary, we can achieve in particular that
\begin{equation}
	\label{ambient_power_universal}
	u + u^{-m} \ge 2
\end{equation}
for all $m \in \N$.
We assume from now on that such $u$ has been fixed.

\begin{lem}
	$\le\le$ is a total preorder.
\end{lem}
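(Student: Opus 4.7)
The preorder properties of $\le\le$ are already provided by the previous lemma (using $u > 1$ which gives $1 \not\approx u$), so the only thing to verify is totality. The plan is to reduce totality of $\le\le$ to fullness of $\le$ via the observation that the two elements being compared, $xu + y$ and $x + yu$, always lie in the same $\sim$-class.

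The key step is the chain of $\sim$-relations: since $1 \le u$ we have $1 \sim u$, and so multiplying by $x$ (which is monotone, and trivially so even if $x = 0$) gives $x \sim xu$; adding $y$ preserves $\sim$, yielding $x + y \sim xu + y$. By the symmetric argument $x + y \sim x + yu$, so by transitivity of $\sim$,
\[
    xu + y \,\sim\, x + yu.
\]
Fullness of the preorder on $F$ then immediately gives $xu + y \le x + yu$ or $xu + y \ge x + yu$, which is precisely $x \le\le y$ or $y \le\le x$.

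I don't expect any real obstacle here: the only potentially delicate point is making sure the argument works uniformly in $x, y$ (including the case where one of them is zero), but since monotonicity of multiplication and addition holds for every element of a preordered semiring without any positivity hypothesis, the chain of $\sim$-relations goes through in all cases. No appeal to multiplicative Archimedeanicity or to the structure theory from Section~\ref{malt_full} is needed for this particular statement; only fullness and the relation $1 \sim u$.
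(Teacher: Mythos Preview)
Your proof is correct and follows essentially the same approach as the paper: establish $xu + y \sim x + yu$ from $u \sim 1$, then invoke fullness of $\le$. The paper's version is just more terse, stating the $\sim$-relation without spelling out the intermediate steps you provide.
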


\begin{proof}
	Since $u \sim 1$, we have $x u + y \sim x + y u$. Hence this follows from the assumption that the preorder on $F$ is full.
\end{proof}

The following auxiliary results will play a key technical role in the proofs of our main theorems.

\begin{lem}
	\label{ambient_lemma}
	Let $F$ be a multiplicatively Archimedean fully preordered semifield of arctic, max-temperate or max-tropical type and with $u > 1$.
	Suppose that the quotient semifield $F/\!\sim$ has quasi-complements.
	Then the following holds for a suitable choice of $u$:
	\begin{enumerate}[label=(\roman*)]
		\item\label{ambient_extend} $\le\le$ extends $\le$.
		\item\label{ambient_embed} If $\le$ is max-temperate or max-tropical, then for all $x \sim y$ in $F$, we have
			\[
				x \le y \quad \Longleftrightarrow \quad x \le\le y,
			\]
			and every $x > 1$ is power universal with respect to $\le\le$.
	\end{enumerate}
	Furthermore, with $v \coloneqq 2u$ we have:
	\begin{enumerate}[label=(\roman*),resume]
		\item\label{ambient_power} $v$ is a power universal element for $\le\le$.
		\item\label{ambient_typegen} $v + v^{-1} > > 2$.
		\item\label{ambient_typearctic} If $\le$ is arctic, then also $v + v^{-1} < < 2 v$.
	\end{enumerate}
\end{lem}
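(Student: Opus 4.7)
My approach is to unwind the definition of $\le\le$ for each item and invoke the structural results of \Cref{intermezzo,malt_full}, using the ``suitable choice of $u$'' clause to replace $u$ by a sufficiently large power $u^k$ when needed. For items (i) and (ii), the key reduction is to multiply the defining inequality $xu + y \le x + yu$ by $x^{-1}$, which is valid by monotonicity of multiplication (and $x \ne 0$): this rewrites $x \le\le y$ as $1 \le\le z$ with $z \coloneqq yx^{-1}$, and $x \le y$ correspondingly as $z \ge 1$. Hence (i) reduces to the claim that $u + z \le 1 + zu$ whenever $z \sim 1$ and $z \ge 1$. The integer-power case $z = u^n$ is precisely the supermodularity \Cref{power_lemma}\ref{supermodular}, applicable because the type hypothesis ensures $u + u^{-1} \ge 2$. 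For general $z$, I would bootstrap via multiplicative Archimedeanicity (to sandwich $z$ between integer powers of $u$), the supermodularity of \Cref{super_general}, and \Cref{rate_crit} to pass to the limit.

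For (ii), in the max-temperate or max-tropical case, the converse $1 \le\le z \Rightarrow 1 \le z$ follows from the cancellation criterion \Cref{cancel2plus}: a hypothetical $z < 1$ combined with $u + z \le 1 + zu$ would, after rearrangement, force $u + u^{-1} \le 2$, contradicting the strict type hypothesis (possibly after replacing $u$ by a large enough power to promote non-strict inequalities to strict). Power universality of any $x > 1$ for $\le\le$ in these types then transports directly from the multiplicative Archimedeanicity of $\le$ via (i) and its converse.

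For (iii), given nonzero $x, y$ with $x \le\le y$, multiplicative Archimedeanicity and fullness of $\le$ together place both $x$ and $y$ in the $\sim$-class of $1$, so $x \sim y$. Power universality of $u$ for $\le$ then yields $k$ with $y \le xu^k$, which by (i) lifts to $y \le\le xu^k$. A direct check gives $u \le\le v$: unwinding, this says $u^2 + 2u \le u + 2u^2$, which follows from $u \le u^2$ (obtained by multiplying $1 \le u$ by $u$) by adding $u^2 + u$ to both sides. Iteration then yields $u^k \le\le v^k$, completing $y \le\le xv^k$. For (iv) and (v), these are polynomial computations in $u$. Unwinding $v + v^{-1} >\!> 2$ and clearing the $v^{-1} = (2u)^{-1}$ denominator reduces to the semifield inequality $4u^3 + 5u \ge 8u^2 + 1$, which follows from the type hypothesis $u^2 + 1 \ge 2u$ multiplied by $4u$ and added to $u \ge 1$. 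The required strictness is verified type by type: the temperate and tropical cases use strict forms of the type hypothesis (available after replacing $u$ by a sufficiently large power), while in the arctic case strictness follows from \Cref{arctic_formula}\ref{arctic_formula_iii}, since both sides evaluate to $9$ at $X = 1$ but have derivatives $17$ and $16$ respectively. Item (v) is analogous in the arctic case, reducing to $4u^3 + 1 > 4u^2 + u$, with equal values $5 = 5$ and derivatives $12 > 9$.

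The main obstacle I anticipate is proving (i) for arbitrary $z \ge 1$ with $z \sim 1$, rather than only for integer powers $z = u^n$. The ring-level identity $(z-1)(u-1) \ge 0$ is immediate, but carrying the corresponding semifield inequality through in the absence of subtraction requires carefully combining the type hypothesis with multiplicative Archimedeanicity and the rate function of \Cref{rate_lem}; this is also where the ``suitable choice of $u$'' clause I expect does its real work.
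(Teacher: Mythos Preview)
Your plan has a concrete error in (iii), and a significant gap in (i).

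\textbf{The error in (iii).} You write that ``multiplicative Archimedeanicity and fullness of $\le$ together place both $x$ and $y$ in the $\sim$-class of $1$''. This is false: multiplicative Archimedeanicity only constrains elements that are already $> 1$ with respect to $\le$, and in a fully preordered semifield there can be many nonzero elements not $\sim 1$ at all. In \Cref{dual_numbers_semifield}, for instance, $2 \not\sim 1$. The whole point of the hypothesis that $F/\!\sim$ has quasi-complements---which you never use---is to handle such elements. The paper's argument for power universality of $v$ does a genuine three-case analysis: first $x \sim 1$ (handled via \Cref{nonarctic_bound_concrete} in the non-arctic case and via \Cref{arctic_main} in the arctic case, with the factor of $2$ in $v = 2u$ doing real work in the latter), then $x = \ell \in \N$, and finally general $x$ by choosing a quasi-complement $y$ with $x + y \sim \ell$ and combining the first two cases. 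Your reduction ``$y \le x u^k$ hence $y \le\le x u^k$'' only makes sense when $x \sim y$, which is not given.

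\textbf{The gap in (i).} Your integer-power case $z = u^n$ via \Cref{power_lemma}\ref{supermodular} is fine, but the ``bootstrap'' to general $z \ge 1$ does not go through by sandwiching alone: if $u^n \le z \le u^{n+1}$, the supermodularity bound gives $zu + 1 \ge u^{n+1} + 1 \ge u + u^n$, which is the wrong direction relative to the target $zu + 1 \ge u + z$. \Cref{super_general} and \Cref{rate_crit} do not obviously bridge this. The paper's proof instead splits on type. In the arctic case it is immediate from \Cref{arctic_main}: $z, u \sim 1$ gives $u + z \approx zu + 1$ directly. In the max-temperate and max-tropical cases, the ``suitable choice of $u$'' is made precise via \Cref{nonarctic_bound_concrete}: one replaces $u$ by a power so that $u + u^{-m} \ge 2$ for \emph{all} $m \in \N$, and then for $x < 1$ one chains $x^k u + 1 \le 2 \le x^k + u$ with an appropriate $k$, obtaining even the strict conclusion $x^k <\!< 1$ needed for (ii). Your sketch of (ii) via \Cref{cancel2plus} does not explain how $z < 1$ and $u + z \le 1 + zu$ would ``force $u + u^{-1} \le 2$''; I do not see a direct rearrangement that does this.

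Your computations for (iv) and (v) are essentially the paper's, including the use of \Cref{arctic_formula}\ref{arctic_formula_iii} for strictness in the arctic case.
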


Since reversing the original preorder $\le$ keeps the ambient preorder $\le\le$ invariant, these statements hold similarly in the min-temperate and min-tropical cases, where reversing the preorder also entails that $u$ needs to be replaced by $u^{-1}$.

\begin{proof}
	First, $u \ge 1$ shows that $1 \ge\ge 0$.
	\begin{enumerate}[label=(\roman*)]
		\item[(i), (ii)] We need to show that for all $x, y \in F$,
			\[
				x \le y \quad \Longrightarrow \quad x \le\le y,
			\]
			as well as the converse in the max-tropical and max-temperate cases, assuming that $x \sim y$.

			All of this is trivial if $x = y = 0$. If exactly one is nonzero, then for $x \sim y$ to hold we would need to have $1 \sim 0$, making $F$ totally preordered, in which case $F$ or $F^\op$ embeds into $\R_+$ or $\TR_+$ by \Cref{real_or_tropical}, where the claims follow by a straightforward computation. We thus assume that $x,y \in F^\times$, and put $y = 1$ without loss of generality.

			In the arctic case, we thus are assuming $x \le 1$ and need to show that
			\[
				x u + 1 \le x + u.
			\]
			This holds even with $\approx$ by \Cref{arctic_main}. 
			We thus turn to the max-temperate and max-tropical cases.
			If $x < 1$, then we can find $k \in \N$ such that $x^k u < 1$. Then
			\[
				x^k u + 1 \le 2 \le x^k + u,
			\]
			where the second inequality holds by power universality of $u$ and the assumed~\eqref{ambient_power_universal}. The first inequality is strict by \Cref{nontropical_cancel} in the max-temperate case, while the second inequality is strict in the max-tropical case by \Cref{tropical_add_full}. Thus $x^k u + 1 < x^k + u$ in both cases, and we conclude $x^k < < 1$. This implies $x < < 1$ by totality of $\le\le$.
			
			The case $x > 1$ is analogous, resulting in $x > > 1$. And finally if $x \approx 1$, then of course we also have
			\[
				x u + 1 \approx u + 1 = 1 + u \approx x + u,
			\]
			resulting in $x \approx\approx 1$.
			
			The final claim on power universality of $x > 1$ holds because some power of $x$ dominates $u$ since $F$ is multiplicatively Archimedean, and we will prove $u$ to be power universal in the upcoming proof of \ref{ambient_power}. 
			
		\setcounter{enumi}{2}
		\item The definition of the ambient preorder shows that $u \ge\ge 1$ is equivalent to $u + u^{-1} \ge 2$, which we have assumed. Since $2 \ge\ge 1$, this implies $v \ge\ge u \ge\ge 1$.

			For power universality, suppose first that $\le$ is not arctic, and therefore is max-temperate or max-tropical. We then show that even $u$ itself is a power universal element for $\le\le$, which means that for all $x \in F^\times$ there is $n \in \N$ with $x \le\le u^n$. This latter inequality amounts to
			\[
				x u + u^n \le x + u^{n + 1}.
			\]
			We consider three subcases.
			\begin{itemize}
				\item Suppose $x \sim 1$.

					We then choose $k \in \N$ with $u^{-k} \le x \le u^k$ and apply \Cref{nonarctic_bound_concrete}, which gives the middle inequality in
					\[
						x u + u^n \le u^{k+1} + u^n \le u^{-k} + u^{n+1} \le x + u^{n+1}
					\]
					for sufficiently large $n$.

				\item Suppose $x = \ell \in \N$.

					Then we need to find $n \in \N$ such that
					\[
						\ell u + u^n \le \ell + u^{n + 1}.
					\]
					Multiplying both sides by $u^{-1}$ shows that this is again covered by \Cref{nonarctic_bound_concrete}.
				\item Now for general $x$, let $y \in F$ be a quasi-complement for $x$ in $F/\!\sim$, so that $x + y \sim \ell \in \Nplus$. But what we have already shown is therefore that both $\ell$ and $\ell^{-1}(x + y)$ are upper bounded with respect to $\le\le$ by some power of $u$, say $u^n$. Hence
					\[
						x \le\le x + y = \ell \cdot \ell^{-1} (x + y) \le\le u^{2n},
					\]
					as was to be shown.
			\end{itemize}

			Second, suppose that $\le$ is arctic.
			We then need to show that for every $x \in F^\times$, there is $n \in \N$ such that
			\[
				x u + 2^n u^n \le x + 2^n u^{n + 1},
			\]
			and we do so using the same case distinctions as above.
			\begin{itemize}
				\item Suppose $x \sim 1$.

					We take $n = 1$ and apply \Cref{arctic_main} in order to obtain
					\[
						x u + 2 u \le x u^2 + 2 u \approx x u^2 + 1 + u^2 \approx x + 2 u^2.
					\]
				\item Suppose $x = \ell \in \N$.

					Now choose $n$ such that $\ell \le 2^n$ in $\N$. Then again using \Cref{arctic_main},
					\begin{align*}
						\ell u + 2^n u^n	& = \ell (u + u^n) + (2^n - \ell) u^n \\
									& \approx \ell (1 + u^{n+1}) + (2^n - \ell) u^n \\
									& \le \ell + 2^n u^{n+1},
					\end{align*}
					as was to be shown.
				\item The case of general $x$ reduces to the two previous ones just as above.
			\end{itemize}
		\item What we need to prove is that $4 u + u^{-1} > > 4$, which unfolds to
			\[
				4 u + (4 u + u^{-1}) < 4 + (4 u + u^{-1}) u.
			\]
			This holds non-strictly because of
			\[
				8 u^2 + 1 \le 4 u^3 + 4 u + 1 \le 4 u^3 + 5 u
			\]
			and dividing by $u$. In the arctic case and the max-temperate one, the second inequality is strict by \Cref{nottrop_convex}, which implies the claim.
			Strict inequality holds also in the max-tropical case, since then
			\[
				8 u^2 + 1 \approx 9 u^2 < 9 u^3 = 4 u^3 + 5 u.
			\]

		\item We need to prove $4 u + u^{-1} < < 8 u$, or equivalently
			\[
				4 u^3 + 8 u^2 + u < 8 u^3 + 4 u^2 + 1,
			\]
			which is indeed true by \Cref{arctic_main} and $4 \cdot 3 + 8 \cdot 2 + 1 < 8 \cdot 3 + 4 \cdot 2$.
			\qedhere
	\end{enumerate}
\end{proof}

\begin{prop}
	\label{real_trop}
	Let $F$ be a multiplicatively Archimedean fully preordered semifield with $u > 1$ such that:
	\begin{itemize}
		\item $F$ is of max-temperate or max-tropical type.
		\item $F / \!\sim$ has quasi-complements.
	\end{itemize}
	Then there is a homomorphism $\phi : F \to \mathbb{K}$ with $\mathbb{K} \in \{\R_+, \TR_+\}$ such that for all $x \sim y$ in $F$, we have
	\beq
		\label{phiembed}
		x \le y \quad \Longleftrightarrow \quad \phi(x) \le \phi(y).
	\eeq
\end{prop}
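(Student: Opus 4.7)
The strategy is to reduce to \Cref{real_or_tropical} (the embedding theorem for multiplicatively Archimedean totally preordered semifields) by passing to the ambient preorder. Concretely, I fix $u > 1$ as provided (replacing it by a suitable power if necessary, as in \Cref{ambient_lemma}) and form $\le\le \coloneqq \le\le_{1,u}$. Fullness of $\le$ makes $\le\le$ total, and \Cref{ambient_lemma}\ref{ambient_power} provides a power universal element $v \coloneqq 2u$ for $\le\le$, so that $(F, \le\le)$ is multiplicatively Archimedean and totally preordered.

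The key step is to pin down the type of $(F, \le\le)$. By \Cref{ambient_lemma}\ref{ambient_typegen} we have $v + v^{-1} > > 2$ strictly. Since $v \approx\approx 1$ would force $v + v^{-1} \approx\approx 2$, this also gives $v > > 1$, so the preorder on $F^\times$ is nontrivial. Applying \Cref{three_cases} to $(F, \le\le)$ therefore rules out the arctic case (where $v + v^{-1} \approx\approx 2$), the min-temperate case (where $v + v^{-1} < < 2$), and the min-tropical case (where $v + v^{-1} \approx\approx 2 v^{-1} < < 2$); only the max-temperate and max-tropical types remain.

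Applying \Cref{real_or_tropical} to $(F, \le\le)$ then yields an order embedding $\phi : (F, \le\le) \to \mathbb{K}$ with $\mathbb{K} \in \{\R_+, \TR_+\}$, the two opposite codomains being excluded by the type analysis. For any $x \sim y$ in $F$ under the original $\le$, \Cref{ambient_lemma}\ref{ambient_embed} gives $x \le y$ if and only if $x \le\le y$, and order-reflection of $\phi$ for $\le\le$ turns this into $\phi(x) \le \phi(y)$, which is exactly \eqref{phiembed}.

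The hard part is really already done in \Cref{ambient_lemma}; the present argument is essentially a clean assembly of it with \Cref{three_cases} and \Cref{real_or_tropical}. The one delicate point is making sure that $(F, \le\le)$ lands in the \emph{max} half of the type classification so that the embedding goes into $\R_+$ or $\TR_+$ rather than into one of their opposites, and this is exactly what the strict inequality $v + v^{-1} > > 2$ accomplishes.
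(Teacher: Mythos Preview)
Your argument has a genuine gap: you infer that $(F, \le\le)$ is multiplicatively Archimedean from the existence of the power universal element $v = 2u$, but a power universal element only yields \emph{polynomial growth}, which is strictly weaker. In a totally preordered semifield, multiplicative Archimedeanicity amounts to \emph{every} element strictly above $1$ being power universal, not merely one fixed element; and \Cref{ambient_lemma}\ref{ambient_embed} only asserts this for those $x >> 1$ that already satisfy $x > 1$ in the original preorder (hence $x \sim 1$), saying nothing about $x >> 1$ with $x \not\sim 1$. Both \Cref{three_cases} and \Cref{real_or_tropical} take multiplicative Archimedeanicity as a hypothesis, so your invocations of them are unjustified as written.

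The paper's proof sidesteps this entirely by using \Cref{arch_trunc} in place of \Cref{real_or_tropical}. Since \Cref{arch_trunc} requires only a totally preordered semifield of polynomial growth, it applies directly to $(F, \le\le)$ and yields a $\le\le$-monotone homomorphism $\phi : F \to \mathbb{K}$ with $\phi(w) > 1$ for every power universal $w >> 1$. The opposite codomains $\R_+^\op$ and $\TR_+^\op$ are then ruled out directly from $v + v^{-1} >> 2$: monotonicity into either opposite would give $\phi(v) + \phi(v)^{-1} \le 2$ in the standard order and hence $\phi(v) = 1$, contradicting $\phi(v) > 1$. No appeal to \Cref{three_cases} is needed. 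The biconditional \eqref{phiembed} is then obtained not from $\phi$ being an order embedding for $\le\le$ on all of $F$, but from the second clause of \Cref{ambient_lemma}\ref{ambient_embed}: if $x > 1$ then $x$ is power universal for $\le\le$, so $\phi(x) > 1$ by the conclusion of \Cref{arch_trunc}; the cases $x < 1$ and $x \approx 1$ follow by symmetry.
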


\begin{proof}
	By \Cref{ambient_lemma}, the ambient preorder $\le\le$ turns $F$ into a totally preordered semifield with power universal element $v > > 1$.
	Therefore \Cref{arch_trunc} produces a $\le\le$-monotone homomorphism $\phi : F \to \mathbb{K}$ for $\mathbb{K} \in \{\R_+, \TR_+\}$, where the other two cases are excluded by $v + v^{-1} > > 2$.

	The desired equivalence is again trivial when $x = 0$ or $y = 0$, so we assume $x, y \in F^\times$ and put $y = 1$ without loss of generality.
	Then if $x > 1$, we also obtain that $x > > 1$ is power universal by \Cref{ambient_lemma}, resulting in $\phi(x) > 1$ by \Cref{arch_trunc}.
	Similarly $x < 1$ implies $\phi(x) < 1$.
	Finally, $x \approx 1$ yields $x \approx\approx 1$, and therefore $\phi(x) = 1$.
\end{proof}

We next aim at an analogous statement for the arctic case.
This is formulated in terms of $\R_{(+)}[X] / (X^2)$, the preordered semifield of arctic type introduced in \Cref{dual_numbers_semifield}.
It plays a similarly paradigmatic role as $\R_+$ does in the max-temperate case and $\TR_+$ in the max-tropical case.

\begin{prop}
	\label{arctic_deriv}
	Let $F$ be a multiplicatively Archimedean fully preordered semifield with a power universal element $u > 1$ such that:
	\begin{itemize}
		\item $F$ is of arctic type.
		\item $F / \!\sim$ has quasi-complements.
		\item $(F / \!\sim) \otimes \Z$ is a finite product of fields.
	\end{itemize}
	Then there exists a homomorphism $\psi : F \to \R_{(+)}[X] / (X^2)$ such that for all $x \sim y$ in $F$, we have
	\beq
		\label{psiembed}
		x \le y \quad \Longleftrightarrow \quad \psi(x) \le \psi(y).
	\eeq
	Moreover, if $F$ is also a semialgebra, then $\psi$ can be chosen so as to preserve scalar multiplication by $\R_+$.
\end{prop}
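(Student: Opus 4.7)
A semiring homomorphism $\psi : F \to \R_{(+)}[X]/(X^2)$ is the same as a pair $(\phi, D)$, where $\phi : F \to \R_+$ is a semiring homomorphism and $D : F \to \R$ is a $\phi$-derivation. In view of the preorder on $\R_{(+)}[X]/(X^2)$ of \Cref{dual_numbers_semifield}, the required equivalence $x \le y \Leftrightarrow \psi(x) \le \psi(y)$ for $x \sim y$ amounts to $\phi$ being degenerate together with $D$ restricting to an order-embedding on each $\sim$-class of $F$.

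I would construct $\phi$ first via the ambient preorder. By \Cref{ambient_lemma} applied in the arctic case, with $v \coloneqq 2u$ the preorder $\le\le$ makes $F$ into a totally preordered semifield with $v$ as power universal element, and satisfying both $v + v^{-1} > > 2$ and $v + v^{-1} < < 2v$; this is exactly the max-temperate condition for $(F, \le\le)$. Then \Cref{real_trop} applied to $(F, \le\le)$ produces a monotone semiring homomorphism $\phi : F \to \R_+$ satisfying $x \le\le y \Leftrightarrow \phi(x) \le \phi(y)$ on all of $F^\times$. The arctic part of \Cref{ambient_lemma}\ref{ambient_extend} shows that $x \sim y$ under $\le$ implies $x \approx\approx y$ under $\le\le$, so that $\phi$ is automatically degenerate for the original preorder.

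I next build $D$. By multiplicative Archimedeanicity together with fullness, the subgroup $G_1 \coloneqq \{x \in F^\times : x \sim 1\}$ is a totally ordered, multiplicatively Archimedean abelian group, and \Cref{rate_lem} supplies an order-preserving group homomorphism $\rho : G_1 \to (\R, +)$ with $\rho(u) = 1$. The arctic identity $a + b \approx ab + 1$ for $a, b \in G_1$ from \Cref{arctic_main}, combined with the polynomial evaluation formulas of \Cref{arctic_formula}, implies that $\rho$ is affinely compatible with convex combinations in $G_1$: one verifies $\rho(\lambda a + (1 - \lambda) b) = \lambda \rho(a) + (1 - \lambda) \rho(b)$ first for dyadic $\lambda \in (0, 1)$ by iterating the averaging identity, and then for all real $\lambda$ by monotonicity. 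In the semialgebra case, this justifies defining $D(x) \coloneqq \phi(x) \, \rho(x / \phi(x))$ for $x \in F^\times$, with $D(0) \coloneqq 0$: the Leibniz rule follows from multiplicativity of $\phi$ and $\rho$, additivity from the affinity, and monotonicity with order-reflection on $\sim$-classes from the order-embedding property of $\rho$. The hypothesis that $(F / \!\sim) \otimes \Z$ is a finite product of fields enters to guarantee that $\phi$ separates $\sim$-classes sufficiently, so that $x / \phi(x)$ indeed lies in $G_1$ rather than merely in $\phi^{-1}(1)$.

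The principal obstacle is the general non-semialgebra case, where $x / \phi(x)$ has no intrinsic meaning within $F$. My plan is to reduce to the semialgebra case by enlarging $F$ to a suitable $\R_+$-semialgebra $\hat F$ containing $F$ with a compatible extended preorder, for which $\hat F$ remains arctic and fully preordered and $(\hat F / \!\sim) \otimes \Z$ is still a finite product of fields; the semialgebra construction then produces $\hat \psi : \hat F \to \R_{(+)}/(X^2)$, and the restriction to $F \subseteq \hat F$ gives the desired $\psi$. Constructing this extension and verifying its structural properties is the main technical difficulty, relying on the arctic identity to force the extended preorder to be essentially unique.
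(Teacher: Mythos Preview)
Your construction of $\phi$ via the ambient preorder and of the order-embedding group homomorphism $\rho : G_1 \to \R$ via \Cref{rate_lem} matches the paper's first steps (with the minor correction that you should invoke \Cref{arch_trunc} rather than \Cref{real_trop}, since $(F,\le\le)$ is only known to be totally preordered with a power universal element, not multiplicatively Archimedean; the conditions $2 << v+v^{-1} << 2v$ from \Cref{ambient_lemma} then force $\mathbb{K}=\R_+$).

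The genuine gap is in extending $D$ from $G_1$ to all of $F$. Your formula $D(x) = \phi(x)\,\rho(x/\phi(x))$ requires $x/\phi(x) \in G_1$, i.e.\ $x \sim \phi(x)\cdot 1$ in $F$. This holds only when the induced map $F/\!\sim{} \to \R_+$ is injective, which is \emph{not} implied by the hypothesis that $(F/\!\sim)\otimes\Z$ is a finite product of fields: there may be several field factors $K_1,\ldots,K_n$, and a single $\phi$ factors through just one of them. Your justification that the finite-product-of-fields hypothesis ``guarantees that $\phi$ separates $\sim$-classes sufficiently'' is therefore incorrect, and your semialgebra-case construction breaks down already here. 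The proposed reduction of the general case to the semialgebra case by enlarging $F$ inherits the same defect and is in any case left as a sketch.

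The paper's route is entirely different and does not attempt to normalize $x$ into $G_1$. Working in the ring $R \coloneqq (F/\!\approx)\otimes\Z$, one observes that the kernel $J$ of $R \to (F/\!\sim)\otimes\Z$ satisfies $J^2 = 0$ thanks to the arctic identity $bd+ac \approx bc+ad$ from \Cref{arctic_main}. The map $\rho$ then extends naturally to $\hat D : J \to \R$ via $\hat D(b-a) \coloneqq \phi(a)\,\rho(b/a)$, where the affinity property you identified (the paper's equation \eqref{Dgen_convex}) guarantees well-definedness and additivity. The crucial step---and the true role of the finite-product-of-fields hypothesis---is that $R/J \cong \prod_i K_i$ is formally smooth over $\Q$ (over $\R$ in the semialgebra case), so the square-zero extension $R \twoheadrightarrow R/J$ admits a ring-theoretic splitting $\beta$. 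One then sets $\hat D(a) \coloneqq \hat D(a - \beta(a))$ for all $a \in R$, and checks the Leibniz rule directly. This lifting-of-nilpotents argument is the missing idea in your proposal.
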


As the proof will show, the assumption on $F / \!\sim \mathop{\otimes} \Z$ can in fact be weakened to formal smoothness over $\Q$ (and probably further). 
We nevertheless phrase the statement in terms of the stronger assumption stating that this ring should be a finite product of fields.
This is more elementary and general enough for how we will use the statement later.

\begin{proof}
	The monotone homomorphisms $\psi : F \to \R_{(+)}[X] / (X^2)$ are precisely the maps of the form
	\[
		\psi(a) = \phi(a) + D(a) X
	\]
	for a degenerate homomorphism $\phi : F \to \R_+$ and a monotone additive map $D : F \to \R$ satisfying the Leibniz rule with respect to $\phi$, which is
	\[
		D(ab) = \phi(a) D(b) + D(a) \phi(b)
	\]
	for all $a, b \in F$.
	If $F$ is a semialgebra, then $\phi$ automatically preserves scalar multiplication since the identity map is the only homomorphism $\R_+ \to \R_+$, and therefore $\psi$ preserves scalar multiplication if and only if $D$ is $\R_+$-linear.
	In either case, we will construct such $\psi$ by constructing its components $\phi$ and $D$ in the following.

	By quasi-complements and \Cref{ambient_lemma}, the ambient preorder $\le\le$ has a power universal element $v > > 1$ with $2 < < v + v^{-1} < < 2 v$, so that \Cref{arch_trunc} provides us with a $\le\le$-monotone homomorphism $\phi : F \to \R_+$.
	Consider next the multiplicative group
	\[
		G \coloneqq \{ x \in F^\times \mid x \sim 1 \}.
	\]
	For $x \in G$, let $D(x)$ be the number associated to it by \Cref{rate_lem}. Then $D : G \to \R$ is an order embedding by construction.
	We prove a few auxiliary statements.
	\begin{enumerate}
		\item For $x, y \sim 1$,
			\beq
				\label{preleibniz}
				D(xy) = D(x) + D(y),
			\eeq
			which in particular implies $D(1) = 0$.

			This follows easily from the definition of $D$.
		\item Moreover, we also have
			\beq
				\label{Dconvex}
				D \left( r x + (1 - r) y \right) = r D(x) + (1 - r) D(y)
			\eeq
			for all rational $r \in [0, 1]$, and for all $r \in [0, 1]$ if $F$ is a semialgebra.

			To see this for rational $r$, we assume $x > 1$ and $y \approx 1$ without loss of generality.
			The claim then follows by an application of \Cref{arctic_formula}.
			In the semialgebra case, the claim for general $r$ follows by monotonicity of $r \mapsto r x + (1 - r) y$ and rational approximation.
		\item\label{lele_to_le} The map
			\[
				F^\times \longrightarrow F, \qquad a \longmapsto \frac{a u + 1}{a + 1}
			\]
			is $\le\le$-to-$\le$-monotone.

			Indeed $a \le\le b$ means exactly that $a u + b \le a + b u$, and therefore the claim follows by
			\begin{align*}
				(a u + 1) (b + 1)	& = a b u + a u + b + 1 \\
							& \le a b u + b u + a + 1 \\
							& = (b u + 1) (a + 1)
			\end{align*}
			and dividing.
		\item For all $a, b \in F^\times$ and $x, y \sim 1$, we have
			\beq
				\label{Dgen_convex}
				D \left( \frac{a x + b y}{a + b} \right) = \frac{ \phi(a) D(x) + \phi(b) D(y) }{ \phi(a) + \phi(b) }.
			\eeq
	
			For the proof, we put $b = y = 1$ without loss of generality. We then show the desired equation
			\[
				D \left( \frac{a x + 1}{a + 1} \right) = \frac{\phi(a)}{\phi(a) + 1} D(x)
			\]
			first for $x = u$.
			For $a \in \Qplus$ it follows by \eqref{Dconvex}. For general $a$, we can use rational approximation in the ambient preorder $\le\le$, which implies the claim by \ref{lele_to_le}.
			Therefore the desired equation holds with $a = u$ for all $r$.

			We now argue that for any $n \in \Nplus$, the desired equation~\eqref{Dgen_convex} holds for $x \sim 1$ if and only if it holds for $x^n$ in place of $x$.
			Using $x \approx \frac{x^n + (n-1)}{n}$, we obtain
			\begin{align*}
				D \left( \frac{a x + 1}{a + 1} \right)	& = D \left( \frac{a \cdot \frac{x^n + (n-1)}{n} + 1}{a + 1} \right) \\
									& = D \left( \frac{1}{n} \cdot \frac{a x^n + 1}{a + 1} + \frac{n-1}{n} \cdot \frac{a + 1}{a + 1} \right) \\
									& = \frac{1}{n} D \left( \frac{a x^n + 1}{a + 1} \right)
			\end{align*}
			by \eqref{Dconvex} and $D(1) = 0$.
			This implies the claim since the right-hand side of~\eqref{Dgen_convex} receives the same factor of $\frac{1}{n}$ from $D(x) = \frac{1}{n} D(x^n)$.

			Finally, another approximation argument shows that the equation therefore holds for all $x \sim 1$, based on the facts established in the previous paragraphs together with monotonicity in $x$.
	\end{enumerate}
	
	We now turn to a number of considerations involving rings.
	By assumption we have a ring isomorphism $(F / \!\sim) \otimes \Z \cong \prod_{i=1}^n K_i$, where the $K_i$ are fields.
	Then every $K_i$ is of characteristic zero, since the image of $F$ in $K_i$ is a subsemifield that is a quotient semifield of $F$, and every quotient of a strict semifield is again a strict semifield (or the zero ring, which is covered by our assumptions in case that the number of factors is $n = 0$).

	The set
	\[
		J \coloneqq \{ b - a \mid a \sim b \textrm{ in } F \}
	\]
	is an ideal in $R \coloneqq (F / \!\approx) \otimes \Z$, namely precisely the kernel of the canonical projection homomorphism $(F / \!\approx) \otimes \Z \longrightarrow (F / \!\sim) \otimes \Z$. We have $J^2 = 0$, since $a \sim b$ and $c \sim d$ imply $(b - a)(d - c) = 0$ in $R$ via \Cref{arctic_main} and
	\[
		bd + ac = ac ( b a^{-1} d c^{-1} + 1 ) \approx ac ( b a^{-1} + d c^{-1} ) = bc + ad,
	\]
	assuming $a,c \neq 0$ without loss of generality. Hence $R$ is a square-zero extension of the quotient ring $R / J \cong \prod_{i=1}^n K_i$. Each $K_i$ is formally smooth over $\Q$~\cite[Corollary~9.3.7]{weibel}, and therefore also their product is formally smooth over $\Q$. (A finite product of formally smooth algebras is formally smooth by lifting of idempotents.)
	In particular, the square-zero extension $R \to R / J$ is split by a ring homomorphism $\beta : R \to R / J$.
	In the semialgebra case, $R$ is an $\R$-algebra in a canonical way, and in this case we have formal smoothness over $\R$ for the same reason, so that we can choose $\beta$ to be $\R$-linear.

	The universal property of $F / \!\approx \mathop{\otimes} \Z$ implies that $\phi : F \to \R_+$ uniquely extends to a ring homomorphism $\hat{\phi} : R \to \R$.
	In the following, we will construct a $\Q$-linear map
	\[
		\hat{D} : R \longrightarrow \R
	\]
	which similarly extends the $D$ defined above, in two stages.
	\begin{enumerate}[resume]
		\item On the nilpotent ideal $J$, taking
			\[
				\hat{D}(b - a) \coloneqq \phi(a) D \left( \frac{b}{a} \right) 
			\]
			for $a \sim b$ in $F^\times$ produces a well-defined map $\hat{D} : J \to \R$.

			Indeed, for well-definedness it is enough to show that adding some $c \in F^\times$ to both terms leaves the right-hand side invariant.
			This will be the special case obtained by taking $d = c$ in the additivity proof of the next item.
		\item\label{hat_preadd} The map $\hat{D} : J \to \R$ is additive. If $F$ is a semialgebra, then it is $\R$-linear.

			Indeed taking $a,b,c,d \in F^\times$ with $a \sim b$ and $c \sim d$, we obtain
			\begin{align*}
				\hat{D} \left( (b - a) + (d - c) \right)	& = \phi(a + c) D \left( \frac{b + d}{a + c} \right) \\
										& = \left( \phi(a) + \phi(c) \right) \, D \left( \frac{a}{a + c} \cdot \frac{b}{a} + \frac{c}{a + c} \cdot \frac{d}{c} \right) \\
										& = \phi(a) D \left( \frac{b}{a} \right) + \phi(c) D \left( \frac{d}{c} \right) \\
										& = \hat{D}(b - a) + \hat{D}(d - c),
			\end{align*}
			where the third step uses \eqref{Dgen_convex}. In the semialgebra case, it is enough to verify preservation of scalar multiplication by positive scalars, which holds since $\phi$ is necessarily a semialgebra homomorphism (because the identity map is the only semiring homomorphism $\R_+ \to \R_+$).
		\item\label{hat_preleibniz} The map $\hat{D} : J \to \R$ satisfies $\hat{D}(ra) = \hat{\phi}(r) \hat{D}(a)$ for all $a \in J$ and $r \in R$. 

			Indeed, writing $a = c - b$ for $b \sim c$ in $F^\times$ and plugging in the definition of $\hat{D}$ shows that this holds for all $r \in F^\times$. But this is nough by linearity in $r$ since $R = F^\times - F^\times$.
		\item Extending by
			\[
				\hat{D}(a) \coloneqq \hat{D}(a - \beta(a))
			\]
			where $\beta : R / J \to R$ is the splitting obtained above, defines an additive map $\hat{D} : R \to \R$ satisfying the Leibniz rule with respect to $\hat{\phi}$.
			If $F$ is a semialgebra, then it is $\R$-linear.

			Since $a - \beta(a) \in J$ for all $a \in R$, this element indeed lies in the domain of $\hat{D}$.
			And since $\beta(a) = 0$ for $a \in J$, it recovers the $\hat{D} : J \to \R$ already defined above, and in particular there is no ambiguity in notation.
			The additivity follows by the additivity of $\beta$ and \ref{hat_preadd}.
			For the Leibniz rule, we take $a, b \in R$ and compute
			\begin{align*}
				\hat{D}(ab)	& = \hat{D}( ab - \beta(ab) ) \\
						& = \hat{D}( a(b - \beta(b)) + \beta(b) (a - \beta(a) )) \\
						& = \hat{\phi}(a) \hat{D}( b - \beta(b)) + \hat{\phi}(\beta(b)) \hat{D}(a - \beta(a)) \\
						& = \hat{\phi}(a) \hat{D}(b) + \hat{\phi}(b) \hat{D}(a),
			\end{align*}
			where the second step uses multiplicativity of $\beta$, the third additivity of $D$ as well as \ref{hat_preleibniz}, and the fourth simply the general definition of $\hat{D}$ as well as the fact that $\hat{\phi}$ factors across $R / J$.

			The claimed $\R$-linearity in the semialgebra case holds by the $\R$-linearity in \ref{hat_preadd} and since $\beta$ is $\R$-linear.
	\end{enumerate}
	Overall, we can therefore define the desired $D : F \to \R$ as the composite map
	\[
		F \longrightarrow (F / \approx) \otimes \Z \stackrel{\hat{D}}{\longrightarrow} \R.
	\]
	The properties of $\hat{D}$ proven above imply that this map is indeed a $\phi$-derivation.
	Moreover, the definition of $\hat{D}$ shows that it restricts to our original $D$ as defined on the multiplicative group $G$.
	This implies the desired equivalence \eqref{psiembed} upon taking $y = 1$ without loss of generality.
\end{proof}

\begin{rem}
	\label{deriv_unique}
	It is interesting to ask how unique the derivation $D$ constructed in the proof is.
	Clearly $D$ can be replaced by any positive multiple of itself, but is there more freedom?
	We answer this question now.

	In terms of the data from the proof, the factored homomorphism $\hat{\phi} : R / J \to \R$ makes $\R$ into an $R/J$-module.
	If $\Delta : R / J \longrightarrow \R$ is now any derivation (over $\Z$), then we can take any derivation $D$ as in the proof and modify it via
	\[
		D' \coloneqq D + \Delta,
	\]
	obtaining another derivation $D'$ that works just as well.
	In particular, $D'$ is still monotone, and in fact $x \sim y$ implies
	\[
		D'(y) - D'(x) = D(y) - D(x).
	\]
	Conversely, if $D$ and $D'$ are two $\phi$-derivations that take the same values on the multiplicative group $G = \{ a \in F^\times \mid a \sim 1 \}$, then the arguments given in the proof show that $D' = D + \Delta$ as above.

	Therefore we can say that $D$, when normalized to $D(u) = 1$, is unique up to the $R / J$-module of derivations $\mathrm{Der}_\Z(R / J, \R)$. Using $R / J \cong \prod_{i=1}^n K_i$ lets us identify this module with the real vector space
	\beq
		\label{ambiggroup}
		\bigoplus_{i=1}^n \mathrm{Der}_\Q(K_i, \R).
	\eeq
	Thus there can be a large ambiguity in the construction of $D$, for example already if $F / \!\sim{} \cong \R_+$.

	If $F$ is a semialgebra, then this ambiguity is attenuated by the additional $\R_+$-linearity condition.
	By the same arguments, the derivation is then unique up to elements of $\bigoplus_{i=1}^n \mathrm{Der}_\R(K_i, \R)$. 
	For example if $F$ is a semialgebra with $F / \!\sim{} \cong \R_+$, then $D$ is unique.
\end{rem}

\section{A stronger catalytic Vergleichsstellensatz}
\label{n2}

\Cref{Imain}, as developed in Part I, concludes both an ``asymptotic'' ordering of the form
\[
	u^k x^n \le u^k y^n \qquad \forall n \gg 1	
\]
and a ``catalytic'' ordering of the form
\[
	a x \le a y
\]
for some nonzero $a \in S$ from the assumption that $\phi(x) < \phi(y)$ for all $\phi \in \Sper{S}$.
Although this is a useful and quite broadly applicable result, the relevant assumption $1 > 0$ makes this result not strong enough for the applications that have been mentioned in the introduction.
The goal of this section is to prove a deeper Vergleichsstellensatz that applies more generally, obtained by putting together the auxiliary results developed in the previous sections.
Here it is.

\begin{thm}[Restatement of \cref{intro_main1}]
	\label{main1}
	Let $S$ be a zerosumfree preordered semidomain with a power universal pair $u_-, u_+ \in S$ and such that:
	\begin{itemize}
		\item $S / \!\sim$ has quasi-complements and quasi-inverses.
		\item $\Frac(S / \!\sim) \otimes \Z$ is a finite product of fields.
	\end{itemize}
	Let nonzero $x, y \in S$ with $x \sim y$ satisfy the following:
	\begin{itemize}
		\item For every nondegenerate monotone homomorphism $\phi : S \to \mathbb{K}$ with trivial kernel and $\mathbb{K} \in \{\R_+, \R_+^\op, \TR_+, \TR_+^\op\}$,
			\[
				\phi(x) < \phi(y).
			\]
		\item For every monotone additive map $D : S \to \R$, which is a $\phi$-derivation for some degenerate homomorphism $\phi : S \to \R_+$ with trivial kernel and satisfies $D(u_+) = D(u_-) + 1$,
			\[
				D(x) < D(y).
			\]
	\end{itemize}
	Then there is nonzero $a \in S$ such that $a x \le a y$. 

	Moreover, if $S$ is also a semialgebra, then it is enough to consider $\R_+$-linear derivations $D$ in the assumptions.
\end{thm}

Of course, if nonzero $a$ with $a x \le a y$ exists, then this conversely implies the non-strict inequalities $\phi(x) \le \phi(y)$ and $D(x) \le D(y)$ for all $\phi$ and $D$ as in the statement.

\begin{proof}
	As sketched in the proof of \Cref{arctic_deriv}, the monotone $\phi$-derivations $D$ for degenerate $\phi : S \to \R_+$ are in canonical bijection with the monotone homomorphisms $S \to \R_{(+)}[X] / (X^2)$ whose composition with the projection $\R_{(+)}[X] / (X^2) \to \R_+$ coincides with $\phi$.

	We start the proof with the case where $S$ is a preordered semifield $F$ of polynomial growth.
	In this case, the two additional hypotheses amount to $F$ having quasi-complements and that $F / \!\sim \mathop{\otimes} \Z$ is a finite product of fields.
	Also $u \coloneqq u_+ u_-^{-1}$ is a power universal element in $F$.
	We moreover assume $y = 1$ without loss of generality.
	Considering this case will take the bulk of the proof; we will generalize from there in the final two paragraphs.

	The goal is to prove $x \le 1$.
	If $x \not\le 1$ held, then by \Cref{semifield_main} we could find a total semifield preorder $\preceq$ which extends $\le$ and satisfies $x \succ 1$.
	We fix such $\preceq$ from now on.
	Now consider the layer preorder $\preceq_u$, as defined in \Cref{layer_preorder}.
	As we have seen, it turns $F$ into a multiplicatively Archimedean fully preordered semifield.
	Moreover, $\preceq_u$ still extends $\le$, for the following reason.
	If $a \le 1$ is arbitrary, then the first condition in the definition of $a \preceq_u 1$ according to~\Cref{layer_preorder} holds by the power universality of $u$ for $\le$, and the second one holds trivially by $a \preceq 1 \preceq u$. Hence $a \preceq_u 1$, so that $\le$ is indeed contained in $\preceq_u$.
	We also note $u \succ_u 1$ as well as the important inequality
	\beq
		\label{x_amb}
		x \succeq_u 1
	\eeq
	for future use, which follows from the definition of $\preceq_u$ using $u^{-k} \le x \le u^k$ for some $k$ and $x \succeq 1$.
	We now distinguish two cases.
	\begin{enumerate}[label=(\arabic*)]
		\item $\preceq_u$ is tropical or temperate.
	\end{enumerate}
	By reversing all preorders and replacing $x$ by $x^{-1}$ if necessary, we can assume without loss of generality that $\preceq_u$ is max-tropical or max-temperate.
	Applying \Cref{real_trop} to $(F,\preceq_u)$ produces a $\preceq_u$-monotone homomorphism $\phi : F \to \mathbb{K}$ for $\mathbb{K} \in \{\R_+, \TR_+\}$ with $\phi(u) > 1$. The inequality~\eqref{x_amb} produces $\phi(x) \ge 1$.
	On the other hand, since $\preceq_u$ extends $\le$, it is clear that $\phi$ is also $\le$-monotone, and $u \ge 1$ together with $\phi(u) > 1$ imply that $\phi$ is nondegenerate.
	Therefore the assumption applies and gives $\phi(x) < 1$, a contradiction.
	\begin{enumerate}[label=(\arabic*),resume]
		\item $\preceq_u$ is arctic. 
	\end{enumerate}
	In this case, we apply \Cref{arctic_deriv} to $(F,\preceq_u)$. Writing $\simeq$ for the equivalence relation generated by $\preceq_u$, we need to verify that the semifield $F / \!\simeq$ has quasi-complements and that $F / \!\simeq \mathop{\otimes} \Z$ is a finite product of fields. But these are both true since $F / \!\simeq$ is a quotient of $F / \!\sim$ (because $\preceq_u$ extends $\le$) and these statements descend to quotients.
	We therefore obtain a degenerate homomorphism $\phi : F \to \R_+$ and a $\preceq_u$-monotone $\phi$-derivation $D : F \to \R$ with $D(u) > 0$.
	These properties in particular imply $D(x) \ge 0$ by~\eqref{x_amb}.
	By rescaling, we can assume $D(u) = 1$ without loss of generality.
	But again since $\preceq_u$ extends $\le$, we know that $D$ is in particular $\le$-monotone, so that the assumption applies.
	This results in $D(x) < 0$, a contradiction.

	This proves the desired statement in the semifield case.
	It remains to reduce the general case to this one.
	We thus verify that the preordered semifield $F \coloneqq \Frac(S)$ satisfies the relevant assumptions. Clearly since $\Frac(S) / \!\sim\: = \Frac(S / \!\sim)$ by \cref{frac_sim}, the existence of quasi-complements in $F / \!\sim$ follows from the existence of quasi-complements and quasi-inverses in $S / \!\sim$ by \Cref{frac_quasi}.
	Similarly, $F / \!\sim \mathop{\otimes} \Z$ is also a finite product of fields.

	Applying the statement to $F$ then produces the desired result as follows.
	The relevant inequalities $\phi\left(\frac{x}{1}\right) < \phi\left(\frac{y}{1}\right)$ and $D\left(\frac{x}{1}\right) < D\left(\frac{y}{1}\right)$ hold because every such map restricts along the homomorphism $S \to \Frac(S)$ to a map of the corresponding type on $S$, where the assumed inequalities apply. Since a homomorphism $\phi : \Frac(S) \to \mathbb{K}$ necessarily has trivial kernel, therefore so does its restriction to $S$.
	If $D : \Frac(S) \to \R$ is a $\phi$-derivation at a degenerate homomorphism $\phi : \Frac(S) \to \R_+$, then the restriction $D|_S$ is clearly also a $\phi|_S$-derivation, and $\phi|_S$ is degenerate as well since $\phi$ is degenerate.
	We conclude that $\frac{x}{1} \le \frac{y}{1}$ in $\Frac(S)$.
	By the definition of the preorder on $\Frac(S)$ of~\eqref{frac_preorder}, this means that there is some nonzero $a \in S$ with $a x \le a y$, as desired.
\end{proof}

\begin{rem}
	In order to determine the ring $\Frac(S / \!\sim) \otimes \Z$ in practice, it is worth noting that the three types of constructions involved in its definition commute: we can perform the quotient by $\sim$, the localization at nonzero elements of $S$ and the Grothendieck construction in either order.
	This is most easily seen by the universal property: $\Frac(S / \!\sim) \otimes \Z$ is initial in the category of rings $R$ equipped with a semiring homomorphism $S \to R$ which identifies all $\sim$-related elements and maps all nonzero elements to the group of units $R^\times$.
\end{rem}

\begin{ex}
	\label{free_ex}
	Consider $\N[\underline{X}]$, the free semiring in $d$ variables $\underline{X} = (X_1, \ldots, X_d)$, equipped with the semiring preorder generated by
	\[
		X_1 \ge 1, \quad \ldots, \quad X_d \ge 1.
	\]
	This preordered semiring is of polynomial growth, since $u_- = 1$ and $u_+ = \prod_i X_i$ form a power universal pair.
	We also have $\N[\underline{X}] / \!\sim{} \cong \N$, so that the other assumptions on $S$ in \Cref{main1} obviously hold as well.
	The preorder can be characterized as $f \le g$ if and only if there is a finitely supported family of polynomials $(p_\alpha)_{\alpha \in \N^d}$ such that\footnote{For a multiindex $\alpha$, we use the shorthand notation $\underline{X}^\alpha \coloneqq \prod_{i=1}^d X_i^{\alpha_i}$.}
	\[
		f = \sum_{\alpha \in \N^d} p_\alpha, \qquad g = \sum_{\alpha \in \N^d} p_\alpha \underline{X}^\alpha.
	\]
	This works because this relation is a semiring preorder, and can be seen to be the smallest semiring preorder with $X_i \ge 1$ for all $i$.
	Indeed the only part of this statement that is not straightforward is the transitivity.
	Given $f \le g \le h$, the desired $f \le h$ follows upon choosing a common refinement of the two given decompositions of $g$ by virtue of the Riesz decomposition property\footnote{If $(p_i)_{i \in I}$ and $(q_j)_{j \in J}$ are finitely supported families of polynomials with $\sum_i p_i = \sum_j q_j$, then there is a doubly indexed family $(r_{ij})_{i \in I, j \in J}$ with $p_i = \sum_j r_{ij}$ and $q_j = \sum_j r_{ij}$. One possible proof of this is by induction on the size of the support.}.

	In order to apply \Cref{main1}, we then first classify the monotone homomorphisms $\N[\underline{X}] \to \mathbb{K}$ for $\mathbb{K} \in \{\R_+, \R_+^\op, \TR_+, \TR_+^\op\}$.
	Using the fact that $\N[\underline{X}]$ is the free semiring, it follows that the homomorphisms to $\R_+$ with trivial kernel are precisely the evaluation maps $f \mapsto f(r)$ for $r \in \Rplus^d$. Similarly, the homomorphisms $\N[\underline{X}] \to \TR_+$ are given by optimization over the Newton polytope in a fixed direction $\beta \in \R^d$,\footnote{See I.2.9 for more detail.}
	\[
		\N[\underline{X}] \longrightarrow \TR_+, \qquad f \longmapsto \max_{s \,\in\, \Newton(f)} \langle \beta, s \rangle.
	\]
	By checking when the generating preorder relations $X_i \ge 1$ are preserved, we therefore obtain the following classification of the nondegenerate monotone homomorphisms $\phi : \N[\underline{X}] \to \mathbb{K}$ with trivial kernel:
	\begin{itemize}
		\item For $\mathbb{K} = \R_+$, the evaluation maps $f \mapsto f(r)$ with $r \in \Rplus^d$ having components $r_i \ge 1$, not all $1$.
		\item For $\mathbb{K} = \R_+^\op$, the evaluation maps $f \mapsto f(r)$ with $r \in \Rplus^d$ having components $r_i \le 1$, not all $1$.
		\item For $\mathbb{K} = \TR_+$, the optimization maps $f \mapsto \max_{s \,\in\, \Newton(f)} \langle \beta, s \rangle$ for $\beta \in \R^d$ having components $\beta_i \ge 0$ not all $0$.
		\item For $\mathbb{K} = \TR_+^\op$, the optimization maps $f \mapsto \max_{s \,\in\, \Newton(f)} \langle \beta, s \rangle$ for $\beta \in \R^d$ having components $\beta_i \le 0$ not all $0$.
	\end{itemize}
	Concerning the derivations, there is exactly one degenerate homomorphism $\phi : \N[\underline{X}] \to \R_+$, namely the evaluation map $f \mapsto f(\underline{1})$. The monotone $\phi$-derivations $\N[\underline{X}] \to \R$ are parametrized by $\gamma \in \R_+^d$ and given by
	\begin{itemize}
		\item $f \mapsto \langle \gamma, \nabla f \rangle$, where $\nabla f \in \R_+^d[\underline{X}]$ is the gradient of $f$.
	\end{itemize}
	The normalization condition $D(u_+) = D(u_-) + 1$ amounts to the constraint $\sum_i \gamma_i = 1$. Since every such $\gamma$ is a convex combination of the standard basis vectors, it is sufficient to impose the assumed inequalities on these.
	
	Hence \Cref{main1} instantiates to the following result. Suppose that $f, g \in \N[\underline{X}]$ satisfy the following conditions:
	\begin{itemize}
		\item $f(\underline{1}) = g(\underline{1})$.
		\item $f(r) < g(r)$ for all $r \in [1,\infty)^d \setminus \{\underline{1}\}$.
		\item $f(r) > g(r)$ for all $r \in (0,1]^d \setminus \{\underline{1}\}$.
		\item For every $\beta \in \R_+^d \setminus \{0\}$,
			\begin{align*}
				\max_{s \,\in\, \Newton(f)} \langle \beta, s \rangle & < \max_{s \,\in\, \Newton(g)} \langle \beta, s \rangle,	\\[5pt]
				\min_{s \,\in\, \Newton(f)} \langle \beta, s \rangle & < \min_{s \,\in\, \Newton(g)} \langle \beta, s \rangle.
			\end{align*}
		\item For all $i = 1, \ldots, d$,
			\[
				\frac{\partial f}{\partial X_i} (\underline{1}) < \frac{\partial g}{\partial X_i} (\underline{1})
			\]
	\end{itemize}
	Then there is a nonzero polynomial $h \in \N[\underline{X}]$ and a family $(p_\alpha)_{\alpha \in \N^d}$ of polynomials $p_\alpha \in \N[\underline{X}]$ such that
	\[
		h f = \sum_{\alpha \in \N^d} p_\alpha, \qquad h g = \sum_{\alpha \in \N^d} p_\alpha \underline{X}^\alpha.
	\]
	Conversely, this property is clearly sufficient to imply the above conditions with non-strict inequality.

	Using the fact that the identity map is the only semiring homomorphism $\R_+ \to \R_+$, it is straightforward to see that the same result holds with $\R_+$ coefficients instead of $\N$ coefficients.
\end{ex}

\section{A stronger asymptotic Vergleichsstellensatz}
\label{n3}

Throughout this section, we also assume that $S$ is a zerosumfree semidomain, and of polynomial growth with respect to a power universal element $u \in S$.

\Cref{Imain} also makes statements about ``asymptotic'' ordering relations of the form $u^k x^n \le u^k y^n$ for all $n \gg 1$. 
Its proof conducted in Part I was based on a suitable definition of \emph{test spectrum} and a proof that the test spectrum is a compact Hausdorff space.
We do not yet have a sufficiently general definition of test spectrum to achieve a similar feat at the level of generality of \Cref{main1}, but we do under the stronger assumption that
\beq
	\label{Ssim_Rd}
	S / \!\sim{}\! \cong \R_{>0}^d \cup \{0\}
\eeq
for some $d \in \N$.\footnote{Note that this amounts to $S / \!\sim{}\! \cong \R_+$ in the case $d = 1$, and we interpret it as $S / \!\sim{}\! \cong \B$ for $d = 0$.}
If this is the case, then $S / \!\sim$ obviously has quasi-complements and quasi-inverses and is such that $\Frac(S / \!\sim) \otimes \Z \cong \R^d$ is a finite product of fields, so that the algebraic assumptions on $S / \!\sim$ in \Cref{main1} are clearly satisfied.

The assumption~\eqref{Ssim_Rd} is most conveniently formulated a little differently: we assume that $S$ is a preordered semiring equipped with a fixed surjective\footnote{The surjectivity requirement can arguably be relaxed and replaced by a suitable arithmetical conditions in the image of $\|\cdot\|$ in $\R_{>0}^d \cup \{0\}$, but we have not worked this out in detail since the surjectivity holds in our applications~\cite{major,arw}.} homomorphism
\[
	\|\cdot\| : S \longrightarrow \R_{>0}^d \cup \{0\}
\]
with trivial kernel and such that
\[
	x \le y \quad \Longrightarrow \quad \|x\| = \|y\| \quad \Longrightarrow \quad x \sim y.
\]
Under these assumptions, we obtain the desired~\eqref{Ssim_Rd}, where the isomorphism is implemented by $\|\cdot\|$ itself. 
The components of $\|\cdot\|$ are degenerate homomorphisms
\begin{equation}
	\label{norm_comps}
	\|\cdot\|_1, \: \ldots, \: \|\cdot\|_d : S \longrightarrow \R_+.
\end{equation}

\begin{rem}
	\begin{enumerate}
		\item The homomorphisms~\eqref{norm_comps} are the only degenerate homomorphisms $S \to \R_+$.\footnote{This would not necessarily be the case without the surjectivity requirement on $\|\cdot\|$. For example if $d = 1$, and if the image of $\|\cdot\|$ is a semiring isomorphic to $\N[X]$, then this fails in a particularly bad way due to the abundance of homomorphisms $\N[X] \to \R_+$.}
			To see this for $d \ge 1$, use~\eqref{Ssim_Rd} and note that every homomorphism $\R_{>0}^d \cup \{0\} \rightarrow \R_+$ extends uniquely to a ring homomorphism $\R^d \to \R$, and the only such homomorphisms are the projections. For $d = 0$, the statement is that there is no homomorphism $\B \to \R_+$ at all, which is clear.
		\item The notation $\|\cdot\|$ is modelled after norms in functional analysis, since these sometimes have homomorphism properties when restricted to a semiring of suitably ``positive'' elements.
		\item For example if $G$ is an abelian group and $\R_+[G]$ the associated group semialgebra, then the $\ell^1$-norm is a semiring homomorphism $\R_+[G] \to \R_+$, making it an instance of the above with $d = 1$.
	\end{enumerate}
\end{rem}

\subsection{The test spectrum}

We now move towards the relevant notion of spectrum, using the multiplicative picture of $\TR_+$.
We need a little more preparation to deal with the derivations, and in particular with the ambiguity discussed in \Cref{deriv_unique}.

\begin{defn}
	For $i = 1,\ldots,d$, two $\|\cdot\|_i$-derivations $D, D' : S \to \R$ are \newterm{interchangeable} if their difference factors through $S / \!\sim$.
\end{defn}

In other words, interchangeability of derivations $D$ and $D'$ means that there is a $\Q$-linear derivation $\Delta : \R \to \R$ such that\footnote{To see this, note that every $\|\cdot\|_i$-derivation uniquely extends to an additive map $\R^d \to \R$ that is a derivation with respect to the $i$-projection map $\R^d \to \R$. But every such derivation itself factors through the $i$-th projection map, and hence our $D' - D$ actually factors through $\|\cdot\|_i$.}
\[
	D'(x) = D(x) + \Delta(\|x\|_i) \qquad \forall x \in S.
\]
The real vector space of such $\Delta$'s is exactly $\mathrm{Der}_\Q(\R,\R)$, which is infinite-dimensional\footnote{This is by the standard fact that the module of K\"ahler differentials for a transcendental field extension in characteristic zero is free with basis given by a transcendence basis of the extension~\cite[Theorem~16.14]{eisenbud}.}.
Since two interchangeable derivations satisfy
\[
	D'(y) - D'(x) = D(y) - D(x)
\]
whenever $x \sim y$, it is sufficient to consider only one of these derivations in our Vergleichsstellens\"atze in the spectral preordering.
In the definition of the test spectrum below, this is why we only consider interchangeability classes of derivations.
If $S$ is a semialgebra, then every interchangeability class has a canonical representative given by an $\R_+$-linear derivation, which is a very convenient feature worth keeping in mind.

\begin{defn}
	\label{test_spectrum}
	The \newterm{test spectrum} of $S$ is the disjoint union
	\begin{align*}
		\Sper{S} \: \coloneqq \:	& \: \{\text{\normalfont{monotone homs }} S \to \R_+ \text{\normalfont{ or }} S \to \R_+^\op \text{\normalfont{ with trivial kernel}} \} \setminus \{\|\cdot\|_1,\ldots,\|\cdot\|_d\} \\[2pt]
					& \sqcup \{\text{\normalfont{monotone homs }} S \to \TR_+ \text{\normalfont{ with }} \phi(u) = e \text{\normalfont{ and trivial kernel}} \} \\[2pt]
					& \sqcup \{\text{\normalfont{monotone homs }} S \to \TR_+^\op \text{\normalfont{ with }} \phi(u) = e^{-1} \text{\normalfont{ and trivial kernel}} \} \\[-2pt]
				& \sqcup \bigsqcup_{i=1}^d \{\text{\normalfont{monotone }} \|\cdot\|_i\text{\normalfont{-derivations }} S \to \R \text{\normalfont{ with }} D(u) = 1 \text{\normalfont{ modulo interchangeability}} \}.
	\end{align*}
\end{defn}

There are thus five types of points of $\Sper{S}$, and these five types match the five types of \Cref{five_types_def}.

As in the simpler case treated in Part I, our goal is to turn $\Sper{S}$ into a compact Hausdorff space.
The relevant topology will again be the weak topology with respect to a certain class of maps. In the present case,
these are the \newterm{logarithmic comparison maps}, defined for nonzero $x, y \in S$ with $x \sim y$ as
\beq
	\label{lev_defn}
	\lc_{x,y}(\phi) \coloneqq \frac{\log \frac{\phi(y)}{\phi(x)}}{\log \phi(u)}, \quad\qquad \lc_{x,y}(D) \coloneqq \frac{D(y) - D(x)}{\|x\|_i},
\eeq
where the first equation applies in all four non-derivation cases and the second equation in the $\|\cdot\|_i$-derivation case for $i = 1,\ldots,d$, respectively, where one may want to keep in mind that $\|x\|_i = \|y\|_i$.

A few further clarifying comments on this definition are in order:

\begin{itemize}
	\item The definition in the derivation case clearly respects interchangeability, making $\lc_{x,y}$ well-defined on $\Sper{S}$.
	\item The denominator in the definition of $\lc_{x,y}(\phi)$ does not vanish for any $\phi$, since $\phi(u) = 1$ would imply by power universality that $\phi$ is degenerate, and therefore equal to one of the $\|\cdot\|_i$, which we have assumed not to be the case.
		In fact, the denominator is positive by $\phi(u) > 1$ for $\phi : S \to \R_+$ and $\phi : S \to \TR_+$, and it is likewise negative for $\phi : S \to \R_+^\op$ and $\phi : S \to \TR_+^\op$.
		
	\item It follows that $\lc_{x,y} \ge 0$ if $x \le y$.
	\item The denominator also results in the convenient normalization $\lc_{u, 1} = 1$.
	\item The denominator $\|x\|$ in the definition of $\lc_{x,y}(D)$ is what makes the equation
		\beq
			\label{lev_rescale}
			\lc_{ax,ay} = \lc_{x,y}
		\eeq
		hold on all of $\Sper{S}$ for all nonzero $a \in S$.
	\item We also have the following cocycle equation: for nonzero $x \sim y \sim z$,
		\[
			\lc_{x,z} = \lc_{x,y} + \lc_{y,z}.
		\]
\end{itemize}

\begin{defn}
	\label{2test_topology}
	$\Sper{S}$ carries the weakest topology which makes the logarithmic comparison maps
	\[
		\lc_{x, y} \: : \: \Sper{S} \longrightarrow \R
	\]
	continuous for all nonzero $x, y \in S$ with $x \sim y$.
\end{defn}

The following compactness statement is now the analogue of I.7.9, where also the proof is conceptually similar.

\begin{prop}
	\label{chaus2}
	With these definitions, $\Sper{S}$ is a compact Hausdorff space.
\end{prop}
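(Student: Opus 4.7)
Plan. The plan is to realize $\Sper{S}$ as a closed subspace of a compact Hausdorff product space via the logarithmic comparison maps, from which both Hausdorffness and compactness follow. This follows the template of the analogous \Cref{Imain}-style compactness statement in Part I (I.7.9), but with the added complication of simultaneously handling the five types of spectral points rather than two.

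First, I would extract uniform bounds on the logarithmic comparison maps from polynomial growth. For each nonzero $x \sim y$, pick $k = k(x,y) \in \N$ with $x \le y u^k$ and $y \le x u^k$. For the four homomorphism types, monotonicity of $\phi$ combined with the fact that $\phi(u)$ lies strictly on one side of $1$ (either by the tropical normalization $\phi(u) = e^{\pm 1}$, or by nondegeneracy and trivial kernel in the real cases) immediately gives $|\lc_{x,y}(\phi)| \le k$. For derivation points, the Leibniz rule applied to these same inequalities, together with $\|u\|_i = 1$ and the normalization $D(u) = 1$, gives the same bound $|\lc_{x,y}(D)| \le k$. Hence the evaluation map
\[
\iota : \Sper{S} \longrightarrow K \coloneqq \prod_{x\sim y,\, x,y\neq 0} [-k(x,y),\, k(x,y)]
\]
is well-defined into a compact Hausdorff space by Tychonoff.

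Second, I would show that $\iota(\Sper{S})$ is closed in $K$ by stratifying by type and verifying that each stratum is cut out by closed conditions. For every type the common constraints are the cocycle law $\sigma_{x,z} = \sigma_{x,y} + \sigma_{y,z}$, the rescaling invariance $\sigma_{ax,ay} = \sigma_{x,y}$, and the monotonicity inequalities dictated by the preorder on $S$. On top of these sits a type-specific additivity law: log-sum-exp for the $\R_+^{(\op)}$ types, max for the $\TR_+^{(\op)}$ types, and the linear Leibniz identity $\sigma_{x+y,z}\,\|x+y\|_i = \sigma_{x,z}\|x\|_i + \sigma_{y,z}\|y\|_i$ for the derivation types. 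Each condition involves finitely many coordinates and is closed, so each stratum is closed and the finite union is closed. For Hausdorffness I would then verify injectivity of $\iota$: within each type, the family $(\sigma_{x,y})$ determines the spectral point up to exactly the ambiguity built into the definition of $\Sper{S}$ --- the rescaling $\phi \mapsto \phi^\alpha$ in the $\R_+^{(\op)}$ cases (absorbed by the denominator $\log\phi(u)$ in the definition of $\lc$), the fixed $\phi(u)$ normalization in the tropical cases, and interchangeability in the derivation case (automatic, since $x \sim y$ forces $\|x\|_i = \|y\|_i$, so any added $\Delta \circ \|\cdot\|_i$ cancels in $D(y) - D(x)$). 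Across types, the distinct additivity laws cannot coincide except at configurations ruled out by nondegeneracy.

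The main obstacle I expect lies in verifying closedness of the $\R_+^{(\op)}$ strata, where the log-sum-exp additivity law $\phi(u)^{\sigma_{x+y,y}} = \phi(u)^{\sigma_{x,y}} + 1$ involves the auxiliary parameter $\phi(u) > 1$ that is not itself a coordinate of $K$. Two natural routes present themselves: either reparametrize so that $\phi(u)$ is fixed (say at $e$), rendering the law directly closed on the quotient, or realize the stratum as the closed projection of a closed subset of $K \times (1,\infty)$. Either approach demands care. By contrast, the tropical and derivation strata should be comparatively easy, since their additivity laws are already polynomial, respectively linear, in the $\lc$-coordinates.
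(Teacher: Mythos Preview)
Your approach has a genuine gap in the compactness argument. You propose to stratify $\iota(\Sper{S})$ by the five types and show each stratum is closed in $K$, then take the finite union. But the individual strata are \emph{not} closed. A sequence of $\R_+$-valued homomorphisms $\phi_n$ with $\phi_n(u) \to \infty$ will have $\iota(\phi_n)$ converging to a point obeying the tropical (max) additivity law, not log-sum-exp; similarly, letting $\phi_n(u) \to 1^+$ produces limits obeying the linear derivation law. So the closure of the $\R_+$-stratum spills into the other strata, and your per-stratum closed conditions do not cut out the stratum itself. Your proposed fixes for the hidden parameter $\phi(u)$ do not rescue this: reparametrizing to $\phi(u) = e$ changes the space you are trying to describe, and projecting from $K \times (1,\infty)$ fails because $(1,\infty)$ is not compact, so the projection is not a closed map.

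The paper's proof takes a fundamentally different route that avoids stratification entirely. After reducing to the semifield $F = \Frac(S)$ and replacing $\lc_{x,y}$ by $\lev_x$ for $x \sim 1$, it writes down a short list of \emph{type-uniform} closed conditions on $\nu \in \prod_{x\sim 1}[-k_x,k_x]$: multiplicativity $\nu_{xy} = \nu_x + \nu_y$, normalization $\nu_u = 1$, monotonicity, and the sign condition $0 \le \nu_{\frac{x+a}{1+a}} \le \nu_x$ or $\nu_x \le \nu_{\frac{x+a}{1+a}} \le 0$. No additivity law of any flavour appears. The hard direction---that every such $\nu$ arises from a spectral point---is proved by defining a new semifield preorder $\preceq$ on $F$ from $\nu$, observing that $(F,\preceq)$ is multiplicatively Archimedean and fully preordered, reading off its type from $\nu_{\frac{u+1}{2}}$, and then invoking the embedding theorems (\Cref{real_trop} and \Cref{arctic_deriv}) to manufacture the $\phi$ or $D$. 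In other words, the structure theory of \Cref{malt_full,ambient} is doing the real work; this is also what handles the transition between types that your stratification cannot. For Hausdorffness, the paper likewise uses the single function $\lev_{\frac{u+1}{2}}$ to separate types concretely, rather than appealing to incompatibility of additivity laws.
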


\begin{proof}
	We first note that $\Sper{S} = \Sper{\Frac(S)}$ by~\eqref{lev_rescale} and the fact that interchangeability of derivations on $S$ is equivalent to that on $\Frac(S)$, since $S / \!\sim{}\! \cong \Frac(S) / \!\sim$ by~\eqref{Ssim_Rd}.
	We therefore assume without loss of generality that $S$ is a semifield $F$.
	Again by~\eqref{lev_rescale}, on $\Sper{F}$ we have
	\[
		\lc_{x,y} =  \lc_{1,\frac{y}{x}}.
	\]
	The topology on $\Sper{F}$ is therefore equivalently generated by the \newterm{logarithmic evaluation maps}
	\[
		\lev_x(\phi) \coloneqq \frac{\log \phi(x)}{\log \phi(u)}, \qquad\quad
		\lev_x(D) \coloneqq D(x),
	\]
	and these are parametrized by $x \sim 1$ in $F$.
	These maps satisfy a degenerate form of the Leibniz rule,
	\[
		\lev_{xy} = \lev_x + \lev_y,
	\]
	as well as monotonicity in $x$ and
	\[
		\lev_u = 1, \qquad \lev_1 = 0.
	\]

	We then start the proof of the claim by showing Hausdorffness first.
	Indeed the logarithmic evaluation map
	\[
		\lev_{\frac{u + 1}{2}} \: : \: \Sper{F} \longrightarrow \R
	\]
	nicely distinguishes the types as follows:
	\begin{itemize}
		\item For $\phi : F \to \TR_+$, we have $\lev_{\frac{u+1}{2}}(\phi) = 1$.
		\item For $\phi : F \to \R_+$, we have $\frac{1}{2} < \lev_{\frac{u+1}{2}}(\phi) < 1$, where the former is by $\frac{r + 1}{2} > r^{1/2}$ for all $r > 1$.
		\item For $D : F \to \R$ a $\|\cdot\|_i$-derivation, we have $\lev_{\frac{u+1}{2}}(D) = \frac{1}{2}$.
		\item For $\phi : F \to \R_+^\op$, we have $0 < \lev_{\frac{u+1}{2}}(\phi) < \frac{1}{2}$, where the latter is by $\frac{r + 1}{2} < r^{1/2}$ for all $r \in (0,1)$.
		\item For $\phi : F \to \TR_+^\op$, we have $\lev_{\frac{u+1}{2}}(\phi) = 0$.
	\end{itemize}
	In particular, this shows that any two points of distinct types can be separated, namely by the continuous function $\lev_{\frac{u + 1}{2}} \in C(\Sper{F})$.
	To separate two points of the same type, we consider each one of the five cases separately, and show in each case that if two points cannot be separated, then they are equal:
	\begin{itemize}
		\item For $\phi, \psi : F \to \TR_+$, suppose that $\lev_x(\phi) = \lev_x(\psi)$ for all $x \sim 1$.

			Then $\phi(u) = \psi(u) = e$ implies $\phi(x) = \psi(x)$ for all $x \sim 1$ by definition of the logarithmic evaluation maps. For arbitrary $a \in F^\times$, let $k \in \N$ be such that $e^{-k} \le \phi(a), \psi(a) \le e^k$. Then
			\[
				e^{-k} \phi(a) = \phi\left( \frac{a + u^{-k}}{a + u^k} \right)
				= \psi\left( \frac{a + u^{-k}}{a + u^k} \right) = e^{-k} \psi(a),
			\]
			implying that $\phi = \psi$.
		\item For $\phi, \psi : F \to \R_+$, suppose that $\lev_x(\phi) = \lev_x(\psi)$ for all $x \sim 1$.

			Then using the above $\lev_{\frac{u+1}{2}}$ shows $\phi(u) = \psi(u)$ after a short calculation. Therefore again $\phi(x) = \psi(x)$ for all $x \sim 1$.
			Then for arbitrary $a \in F^\times$, the desired $\phi(a) = \psi(a)$ follows by
			\[
				\frac{\phi(a) + \phi(u)}{\phi(a) + 1} = \phi\left( \frac{a + u}{a + 1} \right)
				= \psi\left( \frac{a + u}{a + 1} \right) = \frac{\psi(a) + \psi(u)}{\psi(a) + 1}
			\]
			and some calculation, using $\phi(u) = \psi(u) \neq 1$ by nondegeneracy.
		\item For $D, D' : S \to \R$, where $D$ is an $\|\cdot\|_i$-derivation and $D'$ is a $\|\cdot\|_j$-derivation for some $i,j \in \{1,\ldots,d\}$, suppose that $\lev_x(D') = \lev_x(D)$ for all $x \sim 1$.

			We first show that this requires $j = i$.
			Indeed every derivation ``remembers'' the degenerate homomorphism $\|\cdot\|_i$ at which it is defined: using additivity and the Leibniz rule together with $\|u\|_i = 1$, we obtain that for every $a \in F$,
			\begin{align*}
				D \left( \frac{ua + 1}{a + u} \right) & = \frac{D(ua) \left( \|a\|_i + 1 \right) - D(a + u) \left( \|a\|_i + 1 \right)}{\left( \|a\|_i + 1 \right)^2} \\
					& = \frac{D(a) + \|a\|_i D(u) - D(a) - D(u)}{\|a\|_i + 1} \\
					& = \frac{\|a\|_i - 1}{\|a\|_i + 1} D(u).
			\end{align*}
			Since this can be solved uniquely for $\|a\|_i$, and the same applies to $D'$ with respect to $\|a\|_j$, the assumptions on $D$ and $D'$ imply that $\|a\|_i = \|a\|_j$ for all $a \in F$, and hence $j = i$.

			But then $D' - D$ is a $\|\cdot\|_i$-derivation that factors across $F / \!\sim{}$. Therefore $D'$ and $D$ are interchangeable and represent the same point of $\Sper{F}$.
			
		\item The remaining two cases involving $\R_+^\op$ and $\TR_+^\op$ work similarly as the first two.
	\end{itemize}
	This completes the proof of Hausdorffness.

	For compactness, we characterize $\Sper{F}$ as a closed subspace of the product space $\prod_{x \sim 1} [-k_x, k_x]$, which is compact by Tychonoff's theorem.
	Here, $k_x \in \N$ is such that the power universality inequalities
	\[
		x \le u^{k_x}, \qquad x u^{k_x} \ge 1
	\]
	hold.
	Given an element $\nu \in \prod_{x \sim 1} [-k_x,k_x]$, we claim that it corresponds under the logarithmic evaluation maps to a point of $\Sper{F}$ if and only if the following conditions hold for all $x, y \sim 1$ and all nonzero $a$:
	\begin{enumerate}
		\item\label{leibniz_nu} $\nu_{xy} = \nu_x + \nu_y$.
		\item $\nu_u = 1$.
		\item If $x \ge 1$, then $\nu_x \ge 0$.
		\item\label{fraction_sign} We have
			\[
				\left( 0 \le \nu_{\frac{x + a}{1 + a}} \le \nu_x \right) \quad \lor \quad 
				\left( \nu_x \le \nu_{\frac{x + a}{1 + a}} \le 0 \right).
			\]
	\end{enumerate}
	The proof is complete once this claim is established, since all of these conditions are clearly closed.

	It is straightforward to verify that these conditions hold for a spectral point, so we focus on the converse.
	For given $\nu$, the task is to extend it to a spectral point $\phi$ or $D$ defined on all of $F$.
	To this end, consider the new preorder relation defined for $x, y \in F^\times$ by
	\[
		x \preceq y \quad : \Longleftrightarrow \quad (x \sim y \:\:\land\:\: \nu_{y x^{-1}} \ge 0).
	\]
	Declaring additionally $0 \preceq 0$, the above properties \ref{leibniz_nu}--\ref{fraction_sign} then imply that $\preceq$ makes $F$ into a preordered semifield, where the monotonicity of addition in particular relies on \ref{fraction_sign}.
	Note that $\preceq$ extends $\le$ and the quotient of $F$ by the equivalence relation generated by $\preceq$ is still $\R_{>0}^d \cup \{0\}$. 
	Moreover, $(F,\preceq)$ is clearly a multiplicatively Archimedean fully preordered semifield with $u \succ 1$.

	We once more distinguish types: 
	\begin{itemize}
		\item If $\nu_{\frac{u + 1}{2}} > \frac{1}{2}$, then we obtain $u^2 + 2 u + 1 \succ 4 u$ from
			\[
				\nu_{\frac{u^2 + 2u + 1}{4}} = 2 \nu_{\frac{u + 1}{2}} > 1 = \nu_u,
			\]
			which implies that $\preceq$ is max-tropical or max-temperate.
			Therefore \Cref{real_trop} applies and produces a homomorphism $\phi : F \to \mathbb{K}$ with $\mathbb{K} \in \{\R_+, \TR_+\}$ such that for all $x \sim y$,
			\[
				x \preceq y \quad \Longleftrightarrow \quad \phi(x) \le \phi(y).
			\]
			In particular, $\phi$ is still $\le$-monotone and nondegenerate.
			For $x \sim 1$, the desired equation $\nu_x = \frac{\log \phi(x)}{\log \phi(u)}$ then holds as a consequence of the definition of $\preceq$ and \Cref{rate_lem} using rational approximation for the real number $\nu_x$.
		\item If $\nu_{\frac{u + 1}{2}} = \frac{1}{2}$, then $\nu$ will correspond to a derivation.
			Indeed $u^2 + 2 u + 1$ and $4 u$ are now $\preceq$-equivalent, and if $u + u^{-1} \succ 2$ or $u + u^{-1} \prec 2$ was the case, then the cancellation criterion of \Cref{cancel2plus} would imply $u^2 + 2 u + 1 \succ 4 u$ or $u^2 + 2 u + 1 \prec 4 u$.
			Hence $\preceq$ is arctic and \Cref{arctic_deriv} applies, and the resulting homomorphism $\psi : F \to \R_{(+)}[X] / (X^2)$ must have components given by $\|\cdot\|_i$ for some $i \in \{1,\ldots,d\}$, since these are the only degenerate homomorphisms $F \to \R_+$, together with some $\|\cdot\|_i$-derivation $D : F \to \R$.
			The claim $\nu_x = D(x)$ for nonzero $x$ now follows as in the previous item, while also using \Cref{arctic_formula} again.
		\item If $\nu_{\frac{u + 1}{2}} < \frac{1}{2}$, we can proceed as in the first case.
			The only difference is that \Cref{real_trop} needs to be applied to $\preceq^\op$ since $\preceq$ is now min-temperate or min-tropical.
			\qedhere
	\end{itemize}
\end{proof}

Here is now our second main result.

\begin{thm}[Restatement of \cref{main2_intro}]
	\label{main2}
	Let $S$ be a preordered semiring with a power universal element $u \in S$.
	Suppose that for some $d \in \N$, there is a surjective homomorphism $\|\cdot\| : S \to \R_{>0}^d \cup \{0\}$ with trivial kernel and such that
	\[
		a \le b \quad \Longrightarrow \quad \|a\| = \|b\| \quad \Longrightarrow \quad a \sim b.
	\]
	Let $x,y \in S$ be nonzero with $\|x\| = \|y\|$.
	Then the following are equivalent:
	\begin{enumerate}
		\item\label{spectral_ineqs}
			\begin{itemize}
				\item For every nondegenerate monotone homomorphism $\phi : S \to \mathbb{K}$ with trivial kernel and $\mathbb{K} \in \{\R_+, \R_+^\op, \TR_+, \TR_+^\op\}$,
					\[
						\phi(x) \le \phi(y).
					\]
				\item For every $i = 1,\ldots,d$ and monotone $\|\cdot\|_i$-derivation\footnote{Recall from~\eqref{norm_comps} that we write $\|\cdot\|_i$ for the $i$-th component of $\|\cdot\|$.} $D : S \to \R$ with $D(u) = 1$,
					\[
						D(x) \le D(y).
					\]
			\end{itemize}
		\item\label{alg_ineqs} For every $\eps > 0$, we have
			\[
				x^n \le u^{\lfloor \eps n \rfloor} y^n	\qquad \forall n \gg 1.
			\]
	\end{enumerate}
	Moreover, suppose that the inequalities in~\ref{spectral_ineqs} are all strict.
	Then also the following hold:
	\begin{enumerate}[resume]
		\item\label{main2_asymp} There is $k \in \N$ such that
			\[
				u^k x^n \le u^k y^n \qquad \forall n \gg 1.
			\]
		\item\label{main2_power} If $y$ is power universal as well, then
			\[
				x^n \le y^n \qquad \forall n \gg 1.
			\]
		\item\label{main2_catal} There is nonzero $a \in S$ such that
			\[
				a x \le a y.
			\]
			Moreover, there is $k \in \N$ such that $a \coloneqq u^k \sum_{j=0}^n x^j y^{n-j}$ for any $n \gg 1$ does the job.
	\end{enumerate}
	Finally, if $S$ is also a semialgebra, then all statements also hold with only $\R_+$-linear derivations $D$ in \ref{spectral_ineqs}.
\end{thm}

Putting $d = 0$ lets us recover \Cref{Imain} as a much simpler special case, since $\R_{>0}^d \cup \{0\} = \B$ is the Boolean semiring, the derivations therefore do not appear at all, and by the assumed $1 > 0$ only spectral points with $\mathbb{K} \in \{\R_+, \TR_+\}$ can occur.

\begin{proof}
	The implication from \ref{alg_ineqs} to \ref{spectral_ineqs} is again simple.
	Indeed if~\ref{alg_ineqs} holds, then we treat the two kinds of spectral points separately:
	\begin{itemize}
		\item For $\phi : S \to \mathbb{K}$ with $\mathbb{K} \in \{\R_+, \R_+^\op, \TR_+, \TR_+^\op\}$ a nondegenerate monotone homomorphism with trivial kernel, we have $\phi(u) > 1$ by power universality of $u$, as well as
			\[
				\phi(x)^n \le \phi(u)^{\lfloor \eps n \rfloor} \phi(y)^n
			\]
			by monotonicity and multiplicativity of $\phi$.
			By taking $n$-th roots and letting $\eps \to 0$, we obtain the desired $\phi(x) \le \phi(y)$.
		\item For $i = 1,\ldots,d$ and $D : S \to \R$ a monotone $\|\cdot\|_i$-derivation with $D(u) = 1$, we have:
			\[
				n D(x) = D(x^n) \le D(u^{\lfloor \eps n \rfloor} y^n) = \lfloor \eps n \rfloor D(u) + D(y^n) = \lfloor \eps n \rfloor + n D(y).
			\]
			Dividing by $n$ and letting $\eps \to 0$ gives the desired $D(x) \le D(y)$.
	\end{itemize}

	For the other implications, note first that the assumed $S / \!\sim{}\! \cong \R_{>0}^d \cup \{0\}$ implies that $S$ is a zerosumfree semidomain, so that \cref{main1} applies.
	Thus if \ref{spectral_ineqs} holds with strict inequalities, then there is nonzero $b \in S$ with $b x \le b y$.
	Choosing any $c \in S$ with $\|c\| = \|b\|^{-1}$ by the surjectivity of $\|\cdot\|$ and taking $a \coloneqq bc$ results in $\|a\| = 1$, and therefore we can achieve
	\[
		a x \le a y, \qquad a \sim 1.
	\]
	We are going to prove~\ref{spectral_ineqs}$\Rightarrow$\ref{alg_ineqs} by the same arguments as in the proof of \Cref{Imain} conducted in Part I, but where the compactness of $\Sper{S}$ now enters through the logarithmic comparison function $\lc_{x,y}$.
	In order to make the present paper self-contained, we spell this out in some detail.

	So suppose that~\ref{spectral_ineqs} holds, not necessarily with strict inequalities.
	Assuming given $\eps > 0$ as in \ref{alg_ineqs}, we choose a positive rational $\ell/m < \eps/3$ and consider $\tilde{x} \coloneqq x^m$ and $\tilde{y} \coloneqq u^\ell y^m$.
	Then using $\tilde{x}$ and $\tilde{y}$ in~\ref{spectral_ineqs} makes the inequalities hold strictly.
	Hence by the previous paragraph, we get $a \in S$ such that $a \tilde{x} \le a \tilde{y}$ and $a \sim 1$.
	Upon chaining inequalities, this implies
	\[
		a \tilde{x}^j \le a \tilde{y}^j
	\]
	for all $j \in \Nplus$. Choosing $k \in \N$ such that $a \le u^k$ and $1 \le a u^k$, plugging in the definitions of $\tilde{x}$ and $\tilde{y}$ gives
	\[
		x^{mj} \le a u^k x^{mj} \le a u^{k + \ell j} y^{mj} \le u^{2 k + \ell j} y^{mj}.
	\]
	By $\ell/m < \eps/3$, this results in
	\[
		x^{mj} \le u^{\lfloor \frac{\eps m j}{2} \rfloor} y^{mj}
	\]
	for all $j \gg 1$, which is an inequality of the desired form. Choosing another $k$ such that $x \le y u^k$, we obtain
	\[
		x^{mj+1} \le u^{\lfloor \frac{\eps m j}{2} \rfloor + k} y^{mj+1} \le u^{\lfloor \eps m j \rfloor} y^{mj+1},
	\]
	where the second inequality holds again for all $j \gg 1$. Fixing such $j$ and weakening a bit further, the two previous steps imply
	\[
		x^{mj} \le u^{\lfloor \eps m j \rfloor} y^{mj}, \qquad x^{mj+1} \le u^{\lfloor \eps (m j + 1) \rfloor} y^{mj+1}.
	\]
	By multiplying powers of these two inequalities, and using that every sufficiently large natural number is a sum of positive integer multiples of $mj$ and $mj+1$, we therefore conclude $x^n \le u^{\lfloor \eps n \rfloor} y^n$ for all $n \gg 1$, as was to be shown for~\ref{alg_ineqs}.

	We now assume strict inequalities in~\ref{spectral_ineqs} and prove the remaining items \ref{main2_asymp}--\ref{main2_catal}. 
	For \ref{main2_asymp}, we use that $\lc_{x,y} : \Sper{S} \to \R_+$ is a continuous real-valued function on a compact Hausdorff space by \Cref{chaus2}. Since this function is strictly positive by assumption, the compactness implies that it is even bounded away from zero by some $\eps > 0$.
	Focusing on spectral points $\phi : S \to \mathbb{K}$ with $\mathbb{K} \in \{\R_+, \TR_+\}$ for the moment, we get
	\[
		\log \frac{\phi(y)}{\phi(x)} \ge \eps \log \phi(u)
	\]
	by the definition of $\lc_{x,y}$ from~\eqref{lev_defn} and $\phi(u) > 1$.
	Upon choosing any positive rational $\ell/m < \eps$, we therefore obtain
	\[
		\phi(x) < \phi(u)^{-\frac{\ell}{m}} \phi(y)
	\]
	for all these $\phi$, or equivalently $\phi(\tilde{x}) < \phi(\tilde{y})$ with $\tilde{x} \coloneqq u^\ell x^m$ and $\tilde{y} \coloneqq y^m$. 
	Arguing in the same way for the other types of spectral points shows that these $\tilde{x}$ and $\tilde{y}$ satisfy all inequalities in~\ref{spectral_ineqs} strictly.
	Applying now \ref{alg_ineqs} with $\eps \coloneqq \ell$, we can find $k,n \in \Nplus$ with $k < \ell n$ and such that $\tilde{x}^n \le u^k \tilde{y}^n$. Equivalently,
	\[
		u^{\ell n} x^{mn} \le u^k y^{mn}.
	\]
	To summarize and reinitialize the symbols in the exponents, we therefore can find large enough $k$ and $m$ such that
	\beq
		\label{asymp_strong}
		u^k x^m \le u^{k-1} y^m.
	\eeq
	This can be weakened to $u^k x^m \le u^k y^m$, showing that the desired inequality holds for \emph{some} exponent.
	On the other hand, let $n$ be such that $x \le y u^n$. Then we obviously have $u^{kn} x^{mn} \le u^{kn} y^{mn}$ by the previous, but also
	\[
		u^{kn} x^{mn + 1} \le u^{(k+1)n} x^{mn} y \le u^{kn} y^{mn + 1},
	\]
	which differs from $u^{kn} x^{mn} \le u^{kn} y^{mn}$ only in having one larger exponent of $x$ and $y$, respectively. Now since every sufficiently large natural number is a sum of positive integer multiples of $mn$ and $mn + 1$, we finally obtain the claim, since the set of all $j \in \N$ for which 
	\[
		u^{kn} x^j \le u^{kn} y^j
	\]
	holds is closed under addition.
	This proves~\ref{main2_asymp}.

	To prove \Cref{main2_power}, we choose $u \coloneqq y$ and use \eqref{asymp_strong} in the form
	\[
		y^k x^m \le y^k y^{m - 1}
	\]
	for suitably large $k$ and $m$. To bring this into a simpler form, note that $k$ can always be increased by multiplying by a power of $y$, while $m$ can be replaced by a multiple of it by chaining and using $y \ge 1$. We can hence assume $m = k$ for simplicity, giving
	\[
		y^k x^k \le y^{2k - 1},
	\]
	which we know to hold for infinitely many $k$. By induction on $j$, we get
	\[
		y^k x^{jk} \le y^{2k + (j-1)(k-1)}
	\]
	for all $j\in\Nplus$. Since $y \ge 1$, we can weaken this to
	\[
		x^{jk} \le y^{2k + (j-1)(k-1)} \le y^{jk},
	\]
	where the second inequality holds for all $j \gg 1$. This proves the desired inequality for some exponent. Choosing $j$ large enough so that the difference of exponents $\ell \coloneqq jk - [2k + (j-1)(k-1)]$ is large enough for $x \le y^\ell$ to be the case, we moreover obtain
	\[
		x^{jk + 1} \le x^{jk} y^\ell \le y^{2k + (j-1)(k-1) + \ell} \le y^{jk + 1}.
	\]
	The claim that the desired inequality holds for all sufficiently large exponents now follows by the same reasoning as for \ref{main2_asymp} in the previous paragraph.

	Finally, we derive the particular form of $a$ in \ref{main2_catal} from \ref{main2_asymp}. The following argument is essentially~\cite[Lemma~5.4]{ocm}.
	The assumed inequality in the form $u^k x^{n+1} \le u^k y^{n+1}$ implies
	\begin{align*}
		\left( u^k \sum_{\ell=0}^n x^\ell y^{n-\ell} \right) x	& = u^k \left( \sum_{\ell=0}^{n-1} x^{\ell+1} y^{n-\ell} + x^{n+1} \right) \\
									& \le u^k \left( y^{n+1} + \sum_{\ell=1}^n x^\ell y^{n+1-\ell} \right) \\
									& = \left( u^k \sum_{\ell=0}^n x^\ell y^{n-\ell} \right) y,
	\end{align*}
	as was to be shown.

\end{proof}

\begin{ex}
	\label{free_ex2}
	Consider the polynomial semiring $\R_+[\underline{X}]$ with the semiring preorder generated by
	\[
		X_1 \ge 1, \quad \ldots, \quad X_d \ge 1,
	\]
	as briefly discussed at the end of \Cref{free_ex}. Since $\R_+[\underline{X}] / \!\sim \mathop{\cong} \R_+$, \Cref{main2} applies with $d = 1$ and proves the following.
	If $f, g \in \R_+[\underline{X}]$ satisfy the same conditions as those listed in \Cref{free_ex}, then we can conclude that for every $n \gg 1$ there is a family of polynomials $(p_\alpha)_{\alpha \in \N^d}$ such that
	\[
		f^n = \sum_{\alpha \in \N^d} p_\alpha, \qquad g^n = \sum_{\alpha \in \N^d} p_\alpha \underline{X}^\alpha.
	\]
	This is by \Cref{main2}\ref{main2_asymp} together with the fact that factors of $u = \prod_i X_i$ can be cancelled from inequalities in this preordered semiring.
\end{ex}

\bibliographystyle{plain}
\bibliography{local_global_principle}

\begin{thebibliography}{10}

\bibitem{BSS}
Sabine Burgdorf, Claus Scheiderer, and Markus Schweighofer.
\newblock Pure states, nonnegative polynomials and sums of squares.
\newblock {\em Comment. Math. Helv.}, 87(1):113--140, 2012.
\newblock \href{https://arxiv.org/abs/0905.4161}{arXiv:0905.4161}.

\bibitem{eisenbud}
David Eisenbud.
\newblock {\em Commutative algebra}, volume 150 of {\em Graduate Texts in Mathematics}.
\newblock Springer-Verlag, New York, 1995.
\newblock With a view toward algebraic geometry.

\bibitem{major}
Muhammad~Usman Farooq, Tobias Fritz, Erkka Haapasalo, and Marco Tomamichel.
\newblock Matrix majorization in large samples.
\newblock {\em IEEE Trans. Inform. Theory}, 70(5):3118--3144, 2024.
\newblock \href{https://arxiv.org/abs/2301.07353}{arXiv:2301.07353}.

\bibitem{ocm}
Tobias Fritz.
\newblock Resource convertibility and ordered commutative monoids.
\newblock {\em Math. Structures Comput. Sci.}, 27(6):850--938, 2017.
\newblock \href{http://arxiv.org/abs/1504.03661}{arXiv:1504.03661}.

\bibitem{partI}
Tobias Fritz.
\newblock Abstract {V}ergleichsstellens{\"a}tze for preordered semifields and semirings {I}.
\newblock {\em SIAM J. Appl. Algebra Geometry}, 7(2):505--547, 2023.
\newblock \href{https://arxiv.org/abs/2003.13835}{arXiv:2003.13835}.

\bibitem{arw}
Tobias Fritz.
\newblock The asymptotic comparison of random walks on topological abelian groups.
\newblock {\em Bernoulli}, 30(4):2932--2954, 2024.
\newblock \href{https://arxiv.org/abs/2004.13655}{arXiv:2004.13655}.

\bibitem{golan}
Jonathan~S. Golan.
\newblock {\em Semirings and their applications}.
\newblock Kluwer Academic Publishers, Dordrecht, 1999.
\newblock Updated and expanded version of \textit{The theory of semirings, with applications to mathematics and theoretical computer science} [Longman Sci. Tech., Harlow, 1992].

\bibitem{tropical}
I.~Itenberg and G.~Mikhalkin.
\newblock Geometry in the tropical limit.
\newblock {\em Math. Semesterber.}, 59(1):57--73, 2012.
\newblock \href{https://arxiv.org/abs/1108.3111}{arXiv:1108.3111}.

\bibitem{Marshall2006}
Murray Marshall.
\newblock Representations of non-negative polynomials having finitely many zeros.
\newblock {\em Ann. Fac. Sci. Toulouse Math. (6)}, 15(3):599--609, 2006.

\bibitem{marshall}
Murray Marshall.
\newblock {\em Positive polynomials and sums of squares}, volume 146 of {\em Mathematical Surveys and Monographs}.
\newblock American Mathematical Society, Providence, RI, 2008.

\bibitem{Scheiderer2005}
Claus Scheiderer.
\newblock Distinguished representations of non-negative polynomials.
\newblock {\em J. Algebra}, 289:558--573, 2005.

\bibitem{strassen}
Volker Strassen.
\newblock The asymptotic spectrum of tensors.
\newblock {\em J. Reine Angew. Math.}, 384:102--152, 1988.

\bibitem{weibel}
Charles~A. Weibel.
\newblock {\em An introduction to homological algebra}, volume~38 of {\em Cambridge Studies in Advanced Mathematics}.
\newblock Cambridge University Press, Cambridge, 1994.

\end{thebibliography}

\end{document}